\definecolor{rltblue}{rgb}{0,0,0.4}
\definecolor{drkred}{rgb}{0.6,0,0}
\definecolor{drkgreen}{rgb}{0,0.4,0}
\declaretheorem{theorem}
\declaretheorem[sibling=theorem]{lemma}
\declaretheorem[sibling=theorem]{proposition}
\declaretheorem[sibling=theorem]{corollary}
\declaretheorem[sibling=theorem]{definition}
\declaretheorem[numberwithin=theorem]{claim}
\newcommand{\A}{\mathcal{A}}
\newcommand{\B}{\mathcal{B}}
\newcommand{\C}{\mathcal{C}}
\renewcommand{\phi}{\varphi}
\newcommand{\bigwwedge}{%
  \mathop{
    \mathchoice{\bigwedge\mkern-15mu\bigwedge}
               {\bigwedge\mkern-12.5mu\bigwedge}
               {\bigwedge\mkern-12.5mu\bigwedge}
               {\bigwedge\mkern-11mu\bigwedge}
    }
}
\newmdtheoremenv[backgroundcolor=cyan]{theorem-prove}{Theorem}[theorem]
\newmdtheoremenv[backgroundcolor=cyan]{lemma-prove}{Lemma}[theorem]
\newmdtheoremenv[backgroundcolor=cyan]{proposition-prove}{Proposition}[theorem]
\newmdtheoremenv[backgroundcolor=yellow!40]{theorem-check}{Theorem}[theorem]
\newmdtheoremenv[backgroundcolor=yellow!40]{lemma-check}{Lemma}[theorem]
\newmdtheoremenv[backgroundcolor=yellow!40]{proposition-check}{Proposition}[theorem]
\def\hbar{{\bar{h}}}
\def\A{\mathcal A}
\def\B{\mathcal{B}}
\def\C{\mathcal{C}}
\def\H{\mathcal H}
\newtheorem{thm}{Theorem}
\theoremstyle{remark}
\newtheorem{example}[thm]{Example}
\def\and{\mathrel{\&}}
\def\RK{\textbf{RK}}
\title{Computability of Separation Axioms in Countable Second Countable Spaces}
\author{Andrew DeLapo}
\author{David Gonzalez}
\address[DeLapo]{University of Connecticut\\
Department of Mathematics\\
341 Mansfield Road, Storrs, CT 06269\\
  USA}
\email{\href{andrew.delapo@uconn.edu}{andrew.delapo@uconn.edu}}
\urladdr{\url{https://adelapo.github.io}}
\address[Gonzalez]{University of Notre Dame\\
Department of Mathematics\\
Hurley Hall, 255 Hurley, Notre Dame, IN 46556\\
  USA}
\email{\href{dgonza42@nd.edu}{dgonza42@nd.edu}}
\urladdr{\url{https://www.davidgonzalezlogic.com}}
\begin{document}
\maketitle

\begin{abstract}
    We analyze the effective content of countable, second countable topological spaces by directly calculating the complexity of several topologically defined index sets.
    We focus on the separation principles, calibrating an arithmetic completeness result for each of the Tychonoff separation axioms.
    Beyond this, we prove completeness results for various other topological properties, such as being Polish and having a particular Cantor-Bendixson rank, using tools from computable structure theory. 
    This work contrasts with previous work analyzing countable, second countable spaces which used the framework of reverse mathematics, as reverse mathematics generally lacks the precision to pin down exact arithmetic complexity levels for properties of interest.
\end{abstract}

\section{Introduction}

In the framework of computability theory, one can rigorously define the complexity of a subset of the natural numbers. The definable sets of natural numbers form a hierarchy, and this allows for the complexities of sets to be compared to established benchmarks as well as other sets. If we fix an encoding of a certain class of structures by natural numbers and then consider the subset of (codes of) such structures satisfying a desired property, then we arrive at a characterization of the complexity of that structural property. This analysis has been carried out for certain classes of algebraic structures (for example, see \cite{C04, C05, KS17}) as well as some topological structures (see \cite{HTM23, T23}). In this article, we classify several point-set topological properties in the context of countable, second countable (CSC) topological spaces, in particular the separation axioms $T_0$, $T_1$, $T_2$, $T_3$, and others.

The Tychonoff $T_i$ hierarchy of properties is a fundamental subject of study in point set topology (\cite[Chapter 4]{K75} is a standard reference).
These properties were first studied well over one hundred years ago and are tied with the early development of topology.
To this day, they serve as important benchmarks measuring the extent to which topological spaces resemble metric spaces.
Because of their central place in general topology, they are the natural initial subjects for our direct calculations of index sets of CSC spaces.

Within computability theory, CSC spaces were first considered by Dorais in \cite{D11}. Compared to other formalizations of topological spaces within computability, such as Polish spaces or maximal filter spaces, working with CSC spaces has two significant advantages. First, points and (indices for) basic open sets are coded directly by natural numbers, so there is reduced overhead in writing definitions using the language of arithmetic. Second, CSC spaces can exhibit a wider array of general properties than other formalizations of topological spaces; for example, Polish spaces are always $T_3$ (in fact, always completely metrizable), and maximal filter spaces are always $T_1$ and are completely metrizable when they are $T_3$. On the other hand, a CSC space need not even be $T_0$ and may exhibit broadly general behavior.
The CSC formalization is the only way to directly study the complexity of the separation axioms and related notions from general topology, as other formalizations are simply not general enough.

CSC spaces have been the formalization of choice in some recent work in reverse mathematical analysis of topological principles for similar reasons; a particular theorem in combinatorial topology was restricted to CSC spaces and analyzed by Benham, DeLapo, Dzhafarov, Solomon, and Villano in \cite{BDDSV24}. Dorais' 2011 paper \cite{D11} compared notions of compactness for CSC spaces. Shafer \cite{S20} considered compactness specifically for the order topology on countable complete linear orders; the order topology on a countable linear order is a CSC space, and in fact is always $T_3$. Genovesi \cite{G24} also studied several aspects of $T_3$ CSC spaces. In the second half of this paper, we apply properties of certain linear orders to characterize a variety of classes of $T_3$ CSC spaces, thereby contributing to this line of inquiry. 

We define ``CSC space'' here as originally given by Dorais in \cite{D11}.
While Dorais's definition was over the subsystem $\mathsf{RCA_0}$ of second-order arithmetic, our investigation is not within reverse mathematics, so we dispense with this formalism.

\begin{definition}
    A \textbf{base} for a (countable, second-countable) topology on a countable set $X$ is a sequence $\mathcal{U} = (U_i)_{i \in \omega}$ of subsets of $X$, together with a function $k: \omega \times \omega \times X \to \omega$, such that
    \begin{itemize}
        \item for all $x \in X$, there is $i \in \omega$ such that $x \in U_i$, and
        \item for all $x \in X$ and $i, j \in \omega$, if $x \in U_i \cap U_j$, then $x \in U_{k(i, j, x)}$.
    \end{itemize}
\end{definition}

\begin{definition}
    A \textbf{countable second-countable (CSC) space} is a triple $(X, \mathcal{U}, k)$ where $\mathcal{U}$ and $k$ form a base for a countable, second-countable topology on $X$.
\end{definition}

Classically, if $X$ is a topological space with a basis $\mathcal{U}$ of basic open sets, then there is always a function $k$ with the above property. When CSC spaces are studied within reverse mathematics, requiring the $k$ function to exist places some restriction on the types of CSC spaces that can be formed. We will see that the $k$ function has less of a prominent role in our discussion, but we will continue to include $k$ in the signature of a CSC space. In particular, if a CSC space $(X, \mathcal{U}, k)$ is to be considered ``computable,'' then we should expect $k$ to be a computable function.
We formalize the notion of a computable CSC space in the subsequent section, alongside the notion of index complexity within a larger class of spaces in the style of \cite{WC05}.
Using these definitions, we prove a series of theorems, classifying the complexities of the index sets for each of the $T_i$ principles in the Tychonoff hierarchy.
These are summarized in the theorem below.
\begin{theorem}
    \begin{enumerate}
        \item The $T_0$ spaces are $\Pi_2^0$ complete within the CSC spaces.
        \item The $T_{1/2}$ spaces are $\Pi_4^0$ complete within the $T_0$ spaces.
        \item The $T_1$ spaces are $\Pi_2^0$ complete within the $T_{1/2}$ spaces.
        \item The $T_2$ spaces are $\Pi_3^0$ complete within the $T_1$ spaces.
        \item The $T_{2\frac12}$ spaces are $\Pi_5^0$ complete within the $T_2$ spaces.
        \item The $T_3$ or metrizable spaces are $\Pi_5^0$ complete within the $T_{2\frac12}$ spaces.
    \end{enumerate}
\end{theorem}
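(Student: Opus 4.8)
The plan is to prove each of the six items by pairing an upper bound on the arithmetic complexity of the property with a matching hardness construction. For the upper bounds, fix a computable presentation of a CSC space; the atomic relations --- equality of points and each membership relation $x\in U_i$ --- are decidable, so every quantifier in a first-order translation of a separation axiom is a number quantifier over a computable matrix, and one only has to rewrite each axiom in a form that quantifies over points and basic-open indices and count alternations. $T_0$ and $T_1$ are read off the specialization preorder and land at $\Pi^0_2$. For $T_2$ one uses ``distinct points have disjoint basic open neighborhoods'', where disjointness of two basic opens is $\Pi^0_1$, giving $\Pi^0_3$. For $T_{2\frac12}$ one uses ``distinct points have basic open neighborhoods with disjoint closures''; here $z\notin\overline{U_i}$ is $\Sigma^0_2$, so disjointness of two such closures is $\Pi^0_3$ and the axiom is $\Pi^0_5$. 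For $T_3$ one passes to the neighborhood-basis form ``every basic open $U_i\ni x$ contains a basic open $U_j\ni x$ with $\overline{U_j}\subseteq U_i$''; since $\overline{U_j}\subseteq U_i$ is $\Pi^0_3$, regularity is $\Pi^0_5$, and conjoining $T_1$ keeps it there. Metrizability coincides with $T_3$ for CSC spaces (second countable, Urysohn metrization), so those two index sets are literally the same subset of $\omega$ and have the same bound. The only subtle upper bound is $T_{1/2}$, whose usual definition quantifies over arbitrary subsets and so is not first-order: one first replaces it by the equivalent pointwise condition on singletons, observes that ``$\{x\}$ is locally closed'' (the intersection of an open set and a closed set) is $\Sigma^0_3$, and concludes $\Pi^0_4$.

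For the lower bounds, fix for each item a $\Pi^0_m$-complete set $S$ (with $m=2,4,2,3,5,5$ respectively) in a normal form $n\in S\iff\forall a_1\,\exists a_2\,\forall a_3\cdots R(n,a_1,\dots,a_m)$ with $R$ computable, and build a uniformly computable sequence $(\mathcal{X}_n)_{n\in\omega}$ of CSC spaces, each lying in the relevant ambient class (CSC, $T_0$, $T_{1/2}$, $T_1$, $T_2$, $T_{2\frac12}$), such that $\mathcal{X}_n$ satisfies the classified property iff $n\in S$. Each $\mathcal{X}_n$ is a disjoint union of independent ``gadgets'' $G_{n,k}$, one per value $k$ of the outermost quantified variable, built on pairwise disjoint sets of points, with $X$ the only basic open that meets more than one gadget; then the classified axiom holds in $\mathcal{X}_n$ iff every $G_{n,k}$ is ``good'', and $G_{n,k}$ is good iff the residual matrix $\exists a_2\,\forall a_3\cdots R(n,k,a_2,\dots)$ holds. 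The base gadgets are standard: for $T_0$, a pair of points kept topologically indistinguishable until a $\Sigma^0_1$ event adds a separating basic open; for $T_1$, a proper specialization $a_k\rightsquigarrow b_k$ with $\{b_k\}$ kept open (so the gadget is already $T_{1/2}$) that a $\Sigma^0_1$ event destroys; for $T_2$, an ``origin pair'' $a_k,b_k$ with auxiliary points $c_t$, where $c_t$ is placed in a common tail of the neighborhood filters of $a_k$ and $b_k$ exactly when $\neg R(n,k,t)$, so that $a_k,b_k$ are separable iff $\exists s\,\forall t\ge s\,R(n,k,t)$ while the gadget is $T_1$ throughout. The $\Pi^0_4$ and $\Pi^0_5$ gadgets are layered iterations of these, in which the relevant topological feature --- local closedness of a distinguished singleton; disjointness of the closures of two neighborhood filters; the existence of a basic neighborhood of a point whose closure lies inside a prescribed basic open --- is forced to code a $\Sigma^0_3$ resp.\ $\Sigma^0_4$ predicate by nested systems of auxiliary sequences whose memberships in the relevant basic opens are decided stage by stage by the computation defining $S$. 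In every case one writes down an explicit computable $k$-function, checks the two base axioms and uniform computability, and --- crucially --- checks that $\mathcal{X}_n$ belongs to the ambient class for every $n$.

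I expect the main obstacle to be the high-level lower bounds, items (2), (5), and (6): there a gadget must faithfully encode a $\Sigma^0_3$ or $\Sigma^0_4$ predicate through a purely topological feature while the ambient class already being preserved is quite restrictive ($T_0$, $T_2$, $T_{2\frac12}$). Two requirements pull against each other. The closure operator costs two quantifier alternations per application, so a $\Sigma^0_4$-coding gadget requires nesting it, which forces intricate multi-layered families of auxiliary points together with a dynamic construction of the basic opens; at the same time the base axioms (closure under finite intersection, and the $k$-function) may force unintended open sets into the topology, and one must arrange the layers so that these neither collapse the intended neighborhood structure nor push the space out of the ambient class on the ``wrong'' event --- in particular so that the $T_2$ (resp.\ $T_{2\frac12}$) witnesses survive every approximation stage regardless of $n$. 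For item (6) there is the additional, minor point that the constructed $T_3$ examples must be certified metrizable; this follows from Urysohn's metrization theorem, but one should verify that the certification is uniform in $n$. By contrast the upper-bound quantifier counts and the $\Pi^0_2$ and $\Pi^0_3$ lower bounds are routine; essentially all the work is in designing the nested gadgets so that ``$\mathcal{X}_n$ stays in the ambient class for all $n$'' and ``the target property degrades exactly off $S$'' hold simultaneously.
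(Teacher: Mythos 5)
Your overall strategy is the same as the paper's: upper bounds by rewriting each axiom at the level of points and basic opens and counting quantifiers (your counts $\Pi^0_2,\Pi^0_4,\Pi^0_2,\Pi^0_3,\Pi^0_5,\Pi^0_5$ all agree with the paper, including the key reformulations --- regularity via ``$U_i\ni x$ contains some $U_j\ni x$ with $\overline{U_j}\subseteq U_i$'', $T_{1/2}$ via isolation of each point in its own closure), and lower bounds by uniformly computable gadget spaces assembled as disjoint unions over the outermost universal quantifier, reducing from $\mathrm{Tot}$, $\mathrm{CoInf}$, etc. Your $\Pi^0_2$ and $\Pi^0_3$ gadgets are essentially the paper's constructions.

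The genuine gap is that for items (2), (5) and (6) --- which you correctly identify as the crux --- you state only what the gadgets must accomplish (``layered iterations \ldots forced to code a $\Sigma^0_3$ resp.\ $\Sigma^0_4$ predicate by nested systems of auxiliary sequences''), without exhibiting them, and these are exactly the constructions where the theorem could fail. For comparison, the paper's $\Pi^0_5$ modules are two-layered: a base unit $\mathcal{V}_e$ on $\{0\}\cup W_e$ in which $0$ is isolated iff $W_e$ is finite (a $\Sigma^0_2$ event coded into a limit-point fact); then, for a $\Sigma^0_4$ predicate $\exists b\,\forall c\ge b\,(W_{f(a,c)}\text{ finite})$, an $\omega$-sequence of such units with one (for $T_3$) or two (for $T_{2\frac12}$) new points $x^*,y^*$ ``at infinity'' whose basic neighborhoods are tails of the sequence \emph{omitting} the potential limit points $0_c$; regularity at $x^*$ (resp.\ separation of $x^*,y^*$ by closed neighborhoods) then holds iff cofinitely many $W_{f(a,c)}$ are finite, because otherwise some $0_c$ lies in the closure of every tail but in no tail. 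The outer $\forall m$ is then a disjoint union, and Hausdorffness (resp.\ the Urysohn property) of the whole space must be verified by an explicit case analysis on the finitely many non-isolated point types; this verification is what licenses ``within $T_2$'' and ``within $T_{2\frac12}$'', and it is not automatic from your generic description --- e.g.\ the $0_c$ must be excluded from the neighborhoods of $x^*$ precisely so that $x^*$ and $0_c$ remain Hausdorff-separated even when $0_c$ is a limit point. Similarly, item (2)'s $\Pi^0_4$ gadget in the paper is a concrete space on $\omega+1$ coding $\mathrm{Cof}$ into ``$\omega$ is isolated in its closure'', followed by a disjoint union; your ``locally closed singleton'' reformulation gives the upper bound but not this construction. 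Until these gadgets are written down and the ambient-class membership checked case by case, the hardness halves of (2), (5), (6) are not proved.
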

\noindent These results are of particular interest because they pierce quite deeply into the arithmetic hierarchy.
It is not common to see a property as natural as metrizability in a topological space as far as the $\Pi_5^0$ level of the arithmetic hierarchy.
These natural index set calculations also represent a new approach in the study of CSC spaces and computable, general topology.

We move beyond the classical hierarchy of separation principles to complete metrizability or Polishness.
The approach to this problem uses different techniques.
In particular, by systematically turning linear orderings into metrizable topological spaces in the style of \cite{S20,G24}, we can leverage powerful tools from computable structure theory, like the pair of structures theorem \cite{AK90}, to simplify our index set calculations.
We obtain the desired calibration of complete metrizability.
\begin{theorem}
    The completely metrizable spaces are $\Pi_1^1$ complete within the metrizable spaces.
\end{theorem}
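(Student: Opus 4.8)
The plan is to establish the upper and lower bounds separately. For the upper bound I would invoke the classical topological fact that a countable metrizable space is completely metrizable if and only if it is scattered --- equivalently, if and only if it contains no subspace homeomorphic to $\mathbb{Q}$. (The forward direction holds because the perfect kernel of a countable Polish space would be a nonempty perfect Polish space, hence of cardinality continuum, which is absurd; the equivalence of ``scattered'' with ``no copy of $\mathbb{Q}$'' uses Sierpi\'nski's characterization of $\mathbb{Q}$ as the unique countable metrizable space without isolated points, together with a closure argument; and the reverse direction is a transfinite induction on Cantor-Bendixson rank.) Scatteredness of a CSC space $(X,\mathcal{U},k)$ is a $\Pi^1_1$ condition: it says that for every subset $S$ of the countable point set $X$, if $S \neq \emptyset$ then there are $x \in S$ and $i$ with $x \in U_i$ and $U_i \cap S = \{x\}$ --- one universal real quantifier over an arithmetic matrix. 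Since every completely metrizable space is metrizable, it follows that within the metrizable CSC spaces the completely metrizable ones are exactly the scattered ones, and hence form a $\Pi^1_1$ class.

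For the lower bound I would produce a computable reduction from the $\Pi^1_1$-complete set of (indices of) well-founded trees on $\omega$, routed through the $\Pi^1_1$-complete set of scattered countable linear orders. Following the linear-order-to-space approach of \cite{S20, G24}, each countable linear order $L$ is sent to the CSC space carrying its order topology $\tau(L)$; being $T_3$ and second countable, this space is always metrizable, so the reduction lands in the class within which we measure completeness. The engine is the standard construction $T \mapsto L_T$ witnessing $\Pi^1_1$-completeness of scatteredness: $L_T$ is built from the tree so that it is order-theoretically scattered exactly when $T$ is well-founded, and so that when $T$ has an infinite branch a convex copy of $\mathbb{Q}$ is forced into $L_T$. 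One then checks that for these $L_T$ the topological space $\tau(L_T)$ is scattered if and only if $L_T$ is: a convex $\mathbb{Q}$-block carries its own order topology as subspace topology, hence is topologically dense-in-itself, which kills scatteredness in the ill-founded case; and the order topology of a countable scattered linear order is always scattered. With the upper-bound characterization this yields that $\tau(L_T)$ is completely metrizable precisely when $T$ is well-founded, establishing $\Pi^1_1$-hardness and therefore completeness. The pair-of-structures theorem \cite{AK90}, as the paper indicates, is the natural device for streamlining the verification: one fixes a scattered ``yes'' target and a $\mathbb{Q}$-flavored ``no'' target that are indistinguishable under the back-and-forth relations below every relevant level and lets the theorem manufacture the uniformly computable tree-indexed family.

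The step I expect to be the main obstacle is the dictionary between order-theoretic and topological scatteredness. The subspace topology induced on a suborder of a linear order can be strictly finer than that suborder's intrinsic order topology --- it can even be discrete --- so an order-embedded copy of $\mathbb{Q}$ need not be a topological copy of $\mathbb{Q}$, and, dually, gaps in $L$ can generate isolated points that the order does not see. The reduction must therefore be built so that every copy of $\mathbb{Q}$ it forces into $L_T$ occurs as a convex block with no offending gaps on either side --- or else the bare order topology must be traded for a more carefully tailored metrizable space attached to $L$. A lesser technical point is supplying the ``scattered implies completely metrizable'' half of the upper-bound equivalence via an explicit recursion on Cantor-Bendixson rank, stitching together complete metrics on the discrete strata.
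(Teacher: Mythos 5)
Your upper bound coincides with the paper's: for countable metrizable spaces, complete metrizability is scatteredness, which is one universal second-order quantifier over an arithmetic matrix. For the hardness direction you take a genuinely different route, and two points need attention. First, the Ash--Knight pairs-of-structures theorem cannot ``manufacture'' this reduction: it yields $(\Sigma_\alpha,\Pi_\alpha)$-hardness for computable ordinals $\alpha$ from a pair with $\mathcal{A}\leq_\alpha\mathcal{B}$ and computable back-and-forth relations, and there is no $\Pi^1_1$ version of it; the paper uses that theorem only for the arithmetic-level results (discreteness, Cantor--Bendixson rank). For the $\Pi^1_1$ level the paper instead invokes the Harrison-ordering-versus-ordinal dichotomy (Proposition \ref{prop:reduct}): a $\Sigma^1_1$ set reduces uniformly to the question of whether a computable linear order is a Harrison ordering or a computable ordinal. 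Second, and more substantively, the paper sidesteps the convexity obstacle you flag rather than solving it. The Harrison ordering $\omega_1^{ck}\cdot(1+\eta)$ contains no convex copy of $\eta$ --- its rational part indexes blocks of type $\omega_1^{ck}$ --- so its order topology is not shown to be non-Polish by exhibiting a topological copy of $\mathbb{Q}$. Instead, complete metrizability of interval topologies is characterized via the strong Choquet game and the ordinal-valued rank $\RK$ (Proposition \ref{prop:SChiffRk}), and the Harrison ordering is shown to have $\RK=\infty$ because a point that is a limit of points in its own automorphism orbit cannot have ordinal rank (Lemma \ref{lem:NotComplete}); the ``yes'' outcomes are literal ordinals, whose topologies are completely metrizable by the computation bounding $\RK$ by Hausdorff rank. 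Your route --- a bespoke $T\mapsto L_T$ forcing a convex $\eta$-block in the ill-founded case and an order-scattered outcome otherwise, together with the lemma that order-scattered implies topologically scattered --- can be made to work, but both the construction and that lemma (which is precisely the content of the paper's Hausdorff-rank proposition) are exactly the parts your sketch defers. As written, the hardness half is a plan rather than a proof, and the one tool you name explicitly for it is the wrong one.
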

\noindent It should be noted that this theorem statement bears similarities to \cite[Corollary 12.10]{G24}, but is outside of the context of reverse math and is approached in an entirely different manner.
Our approach to this problem has the advantage that we obtain several other index set results essentially ``for free'' on the way to completing our proof.
We highlight an illustrative example below.
\begin{proposition}
    The spaces with Cantor-Bendixson rank $\alpha$ are $\Pi_{2\alpha+3}^0$ complete within the metrizable spaces.
\end{proposition}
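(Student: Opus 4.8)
The plan is to prove the two completeness halves separately: the upper bound $\Pi^0_{2\alpha+3}$ by a direct computation in the arithmetic hierarchy, and the matching hardness by running a suitable pair of linear orders through the linear-order-to-metrizable-space construction (the same machinery underlying the completely metrizable result) together with the pair of structures theorem \cite{AK90}.

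For the upper bound, the first step is to pin down the complexity of the iterated Cantor-Bendixson derivatives of a CSC space $(X,\mathcal U,k)$. A point is isolated exactly when some basic open equals its singleton, so ``$x$ is non-isolated in $X$'' is an arithmetic condition on $x$, uniform in the code of the space. I would then show by transfinite induction that ``$x\in X^{(\beta)}$'' is uniformly $\Pi^0_{2\beta+1}$: the successor step contributes two quantifier alternations --- one universal over the basic neighbourhoods of $x$, one existential over a point of $X^{(\beta)}$ witnessing that such a neighbourhood is not a singleton in the subspace $X^{(\beta)}$ --- which is the source of the factor of $2$, while the limit step adds one universal quantifier over the stages below $\beta$ and, together with the supremum of the earlier bounds, stays at level $\Pi^0_{2\beta+1}$. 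Consequently ``$X^{(\alpha)}$ is perfect'', i.e.\ $X^{(\alpha)}=X^{(\alpha+1)}$, is $\Pi^0_{2\alpha+3}$, and the additional requirement ``$X^{(\beta)}\neq X^{(\beta+1)}$ for every $\beta<\alpha$'' is of no greater complexity; their conjunction, which says precisely that $X$ has Cantor-Bendixson rank $\alpha$, lies in $\Pi^0_{2\alpha+3}$. Nothing here uses metrizability, so restricting to the metrizable spaces can only lower the level.

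For the hardness I would use the construction, in the style of \cite{S20,G24}, assigning to each countable linear order $L$ a ($T_3$, hence metrizable) CSC space $\mathcal S(L)$ --- essentially the order topology on $L$, or a presentation of it arranged to keep metrizability under control --- so that the Cantor-Bendixson rank of $\mathcal S(L)$ is governed by an order-theoretic rank of $L$. Fix a computable linear order $A_\alpha$ with $\mathcal S(A_\alpha)$ of Cantor-Bendixson rank exactly $\alpha$. Applying the pair of structures theorem \cite{AK90} to $A_\alpha$ and a partner $B_\alpha$ chosen so that $\mathcal S(B_\alpha)$ has Cantor-Bendixson rank different from $\alpha$ and the back-and-forth relation between $A_\alpha$ and $B_\alpha$ is calibrated to the level needed to hit $\Pi^0_{2\alpha+3}$ yields, for any $\Pi^0_{2\alpha+3}$ set $S$, a uniformly computable sequence of linear orders $(L_e)_{e\in\omega}$ with $L_e\cong A_\alpha$ when $e\in S$ and $L_e\cong B_\alpha$ otherwise. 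Postcomposing with $\mathcal S(\cdot)$ gives a computable $m$-reduction from $S$ to $\{e:\mathcal S(L_e)\text{ has Cantor-Bendixson rank }\alpha\}$, and since every space in the image is metrizable, this witnesses $\Pi^0_{2\alpha+3}$-hardness within the metrizable spaces; with the upper bound this completes the proof.

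The crux --- and the principal obstacle --- is the rank calculus behind the pair $(A_\alpha,B_\alpha)$: I must arrange simultaneously that the order-theoretic rank of $A_\alpha$ is sent to Cantor-Bendixson rank exactly $\alpha$ by $\mathcal S$, that $\mathcal S(B_\alpha)$ lands on a detectably different rank, that the back-and-forth distance between $A_\alpha$ and $B_\alpha$ is exactly $2\alpha+3$ rather than some nearby level (so that the pair of structures theorem delivers the optimal hardness and not something weaker), and that the entire uniformly computable family produced by the theorem --- together with its image under $\mathcal S$ --- consists of bona fide metrizable CSC spaces. Matching the ``one derivative $\approx$ two alternations'' doubling against the fixed additive offset coming from the CSC encoding conventions (the complexity of the underlying point set and the universal quantifier over points in the definition of ``perfect'') is where most of the bookkeeping lies, and the limit-ordinal case demands extra care both in the limit step of the derivative-complexity induction and in building $A_\alpha$, whose rank must be assembled as a limit of the constructions used at smaller ordinals.
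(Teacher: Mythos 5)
Your overall strategy is exactly the paper's: a transfinite-inductive complexity count for the upper bound (your iterated-derivative predicate ``$x\in X^{(\beta)}$'' plays the role of the paper's ``$\beta$-limit'' predicate, and both land at $\Pi^0_{2\alpha+3}$ for ``rank exactly $\alpha$''), and, for hardness, the pair of structures theorem applied to a pair of linear orderings pushed through the order-topology map $L\mapsto L^\tau$. The upper-bound half of your argument is essentially complete.

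The gap is in the hardness half, and you flag it yourself: you never exhibit the pair $(A_\alpha,B_\alpha)$, and the ``rank calculus'' you defer is the entire content of that half. Everything hinges on producing two computable linear orderings whose order topologies have Cantor--Bendixson ranks $\alpha$ and $\alpha+1$ respectively, \emph{and} which are $\leq_{2\alpha+3}$-related in the correct direction with back-and-forth relations computable up to $2\alpha+3$; without a witness this calibration is not known to be achievable, and a naive choice such as $\omega^\alpha+1$ versus $\omega^{\alpha+1}+1$ does not obviously sit at the right back-and-forth level. The paper's witness is $\omega^\alpha\cdot(\zeta+1+\zeta)\leq_{2\alpha+3}\omega^\alpha\cdot\zeta$, quoted from \cite{GR24} and \cite{MBook}, with the back-and-forth relations computable up to $2\alpha+3$ in the standard presentations: the first ordering's topology has a single $(\alpha+1)$-limit point (the ``$1$'' in the middle), hence rank $\alpha+1$, while the second has none, hence rank $\alpha$. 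The pair of structures theorem then places the rank-$(\alpha+1)$ space on the $\Sigma_{2\alpha+3}$ outcome and the rank-$\alpha$ space on the $\Pi_{2\alpha+3}$ outcome, which is precisely the orientation needed for $\Pi^0_{2\alpha+3}$-hardness of ``rank exactly $\alpha$'' (and all images under $^\tau$ are $T_3$, hence metrizable). Supplying this pair, or an equivalent one together with the back-and-forth computation, is what your proof still needs.
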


Our article is organized into three sections, including the current introductory section.
Section 2 concerns index set calculations for the classical hierarchy of separation principles up to metrizability.
Section 3 moves beyond metrizability and uses techniques from computable structure theory to calculate the index set for the completely metrizable spaces and related notions.

\section{Separation Principles Up to Metrizability}
In this section, we calculate the optimal complexity for the index sets of countable second countable spaces with separation properties up to the metric level.
The section is organized into three subsections.
In the first subsection, we define the fundamental notions of index set and relative complexity needed throughout the article.
In the second subsection, we analyze the most standard separation hierarchy of $T_0,T_1,T_2,$ and $T_3$.
Note that because our spaces are second countable, the Urysohn metrization theorem guarantees that $T_3$ spaces are the same as metrizable spaces, and therefore are the same as $T_4$ spaces (or any stronger property implied by metrizability).
In the final subsection, we analyze the important intermediate notions of $T_{\frac12}$ and $T_{2 \frac12}$.

\subsection{Definitions for important terms}
To be precise, we take the following definition for the index of CSC space.
\begin{definition}
    An \textbf{index} for a CSC space is a pair $\left\langle m, n \right\rangle$ such that
    \begin{itemize}
        \item $\Phi_m$ is the characteristic function for $\mathcal{U} = (U_i)_{i \in \omega}$, meaning $\Phi_m$ is total, and for all $i, x \in \omega$, $\Phi_m(i, x) = 1$ if and only if $x \in U_i$, and $\Phi_m(i, x) = 0$ if and only if $x \notin U_i$,
        \item $\Phi_n$ calculates the $k$ function for the space; in other words, $\Phi_n$ is total and for all $i,j,x\in\omega$ with $x\in U_i\cap U_j$, $\Phi_m(\Phi_n( i,j,x),x)=1$ and for all $y$ with $\Phi_m(\Phi_n(i,j,x),y)=1$ both $\Phi_m(i,y)=1$ and $\Phi_m(j,y)=1$.
    \end{itemize}

    Write $CSC = \{e \in \omega : \text{$e$ is an index for a CSC space}\}$.
\end{definition}

It should be noted that being an index for a CSC space is already a non-trivial property.
\begin{theorem}
    The set $CSC$ is $\Pi^0_2$-complete.
\end{theorem}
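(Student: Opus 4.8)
The plan is to prove matching $\Pi^0_2$ upper and lower bounds.

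For the upper bound I would unwind the definition of an index and observe that $\langle m,n\rangle\in CSC$ is the conjunction of finitely many $\Pi^0_2$ conditions: (i) $\Phi_m$ is total and $\{0,1\}$-valued; (ii) $\Phi_n$ is total; (iii) the sets $U_i=\{x:\Phi_m(i,x)=1\}$ cover $\omega$; (iv) whenever $x\in U_i\cap U_j$, also $x\in U_{\Phi_n(i,j,x)}$; and (v) whenever $x\in U_i\cap U_j$, also $U_{\Phi_n(i,j,x)}\subseteq U_i\cap U_j$. Conditions (i)--(iii) are immediately $\Pi^0_2$; for instance (i) is $\forall i,x\,\exists s\,(\Phi_{m,s}(i,x)\!\downarrow\in\{0,1\})$. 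For (iv) and (v) the point is that, once (i) and (ii) are granted, the predicates ``$x\in U_i$'', ``$x\notin U_i$'', and ``$\Phi_n(i,j,x)\!\downarrow$'' are each $\Sigma^0_1$, so (iv) rewrites as
\[
\forall i,j,x\,\exists s\,\big[\Phi_{m,s}(i,x)\!\downarrow=0 \;\vee\; \Phi_{m,s}(j,x)\!\downarrow=0 \;\vee\; \big(\Phi_{n,s}(i,j,x)\!\downarrow \;\wedge\; \Phi_{m,s}(\Phi_{n,s}(i,j,x),x)\!\downarrow=1\big)\big],
\]
and (v) rewrites similarly with an extra universally quantified variable $y$ and the clauses rearranged; both have decidable matrices, hence are $\Pi^0_2$. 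A finite intersection of $\Pi^0_2$ sets is $\Pi^0_2$, so $CSC\in\Pi^0_2$.

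For the lower bound I would give a many-one reduction from the canonical $\Pi^0_2$-complete set $\mathrm{Tot}=\{e:\Phi_e\text{ is total}\}$. Given $e$, use the $s$-$m$-$n$ theorem to fix an index $m(e)$ for the partial computable map that, on input $(i,x)$, first searches for a stage $s$ with $\Phi_{e,s}(x)\!\downarrow$ and then outputs $1$ if $i=x$ and $0$ otherwise; let $n$ be a fixed index for the total function $(i,j,x)\mapsto x$. If $\Phi_e$ is total then $\Phi_{m(e)}$ is total and $\{0,1\}$-valued with $U_i=\{i\}$, i.e.\ it codes the discrete topology on $\omega$: the $U_i$ cover $\omega$, and since $x\in U_i\cap U_j$ forces $i=j=x$, the choice $k(i,j,x)=x$ works ($x\in U_x=\{x\}\subseteq U_i\cap U_j$), so $\langle m(e),n\rangle\in CSC$. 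If $\Phi_e$ is not total, say $\Phi_e(x_0)\!\uparrow$, then $\Phi_{m(e)}(i,x_0)\!\uparrow$ for all $i$, so $\Phi_{m(e)}$ is not total and $\langle m(e),n\rangle\notin CSC$. Thus $e\mapsto\langle m(e),n\rangle$ witnesses $\mathrm{Tot}\le_m CSC$, so $CSC$ is $\Pi^0_2$-hard; combined with the upper bound this yields $\Pi^0_2$-completeness.

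I do not expect a genuine obstacle: this is essentially a warm-up. The only delicate point is the bookkeeping in the upper bound. Clauses (iv) and (v) refer to the sets $U_{\Phi_n(i,j,x)}$, whose indices are themselves outputs of a computation, so one must check that asserting totality of $\Phi_m$ and $\Phi_n$ up front collapses the negated hypotheses in those clauses to $\Sigma^0_1$ statements rather than genuinely $\Pi^0_2$ ones, keeping the whole conjunction at the $\Pi^0_2$ level. The lower bound requires no cleverness, since the failure of $\Phi_m$ to be total is already an obstruction of exactly the right complexity.
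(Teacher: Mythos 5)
Your proposal is correct and follows essentially the same route as the paper: a reduction from $\mathrm{Tot}$ via a space in which $U_i=\{i\}$ exactly when $\Phi_e$ is total (the paper uses $k(i,j,x)=i$ instead of $x$ and only delays convergence on the diagonal $i=x$, but these are immaterial variants), together with the observation that membership in $CSC$ is a finite conjunction of $\Pi^0_2$ conditions. Your upper-bound verification is more explicit than the paper's, which simply asserts it as immediate, and your handling of clauses (iv)--(v) modulo the totality clauses is the right way to keep everything at $\Pi^0_2$.
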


\begin{proof}
    It is immediate from the definition outlined above that $CSC$ is $\Pi^0_2$.
    Specifically, ensuring the totality of $\Phi_m$ and $\Phi_n$ are $\Pi^0_2$ properties.

    To see that the set is $\Pi^0_2$-complete, we reduce the set of total functions to $CSC$.
    Given $\Phi_e$ we define $\Phi_{m(e)}$ and $\Phi_{n(e)}$ on the underlying space $\omega$.
    Let
    \[\Phi_{m(e)}(i,x)=
    \begin{cases}
        0 \text{ if } i\neq x \\
        1 \text{ if } i=x \text{ and } \Phi_e(x)\downarrow \\
        \uparrow \text{ if } i=x \text{ and } \Phi_e(x)\uparrow
    \end{cases},\]
    and
    \[ \Phi_{n(e)}(i,j,x)=i.\]
    If $\Phi_e$ is not total, it is apparent that $\Phi_{m(e)}$ is not total either, so $\langle m(e),n(e)\rangle$ is not in $CSC$.
    If $\Phi_e$ is total, $\Phi_{m(e)}$ puts $x\in U_i$ precisely when $x=i$; it is straightforward to confirm that $\Phi_{n(e)}$ acts as required in this case, so $\langle m(e),n(e)\rangle$ is in $CSC$.
\end{proof}

Because there is some complexity inherent in ensuring that a CSC space is coded, we take efforts to avoid relying on this complexity when constructing our index set examples.
More generally, when it comes to separation principles, we have that, for example, $T_1\subset T_0$.
We also need to make similar efforts to ensure that, say, the coding power of $T_1$ is not entirely dependent on $T_1$ spaces already being $T_0$ spaces.
We formalize this using the following notion due to Calvert and Knight \cite{WC05}.

\begin{definition}
    Let $\Gamma$ be a complexity class and $A \subseteq C$.
    \begin{itemize}
        \item $A$ is $\Gamma$ \textbf{within} $C$ if $A = B \cap C$ for some $B \in \Gamma$.
        \item $A$ is \textbf{$\Gamma$-hard within} $C$ if for every $B \in \Gamma$, there is a computable function $f: \omega \to \omega$ such that for all $e \in \omega$,
        \begin{equation*}
            \text{$f(e) \in C$ and $(e \in B \iff f(e) \in A)$}.
        \end{equation*}
        \item $A$ is \textbf{$\Gamma$-complete within} $C$ if $A$ is $\Gamma$ within $C$ and $A$ is $\Gamma$-hard within $C$.
    \end{itemize}
\end{definition}

Suppose we have sets $A\subseteq C \subseteq CSC$, which are a set of indices of CSC spaces having some topological properties. We want to show $A$ is $\Gamma$-hard within $C$ for some complexity class $\Gamma$. A typical proof will proceed as follows:
\begin{enumerate}
    \item Fix a set $B \in \Gamma$. Usually, $B$ is a known $\Gamma$-complete set.
    \item Let $e \in \omega$. Define a sequence of sets $\mathcal{V} = (V_i)_{i \in \omega}$ uniformly computable in $e$. The sets in $\mathcal{V}$ will serve as a subbasis of a CSC space.
    \item Close $\mathcal{V}$ under finite intersection to get the collection $\mathcal{U}$ of basic open sets and the computable $k$ function. This can be done via primitive recursion. Thus we have a CSC space $X_e = (\omega, \mathcal{U}, k)$.
    \item Note that $X_e$ is uniformly computable in $e$. Argue via the $s$-$m$-$n$ theorem. Formally, $X_e$ has index $\left\langle m_e, n_e \right\rangle$, and the map $e \mapsto \left\langle m_e, n_e\right\rangle$ is computable. This map serves as our many-one reduction within $CSC$.
    \item Prove that $e \in B$ if and only if $X_e$ has the topological property described by $A$, and that $X_e$ always has the topological property described by $C$.
\end{enumerate}

In practice, steps (3) and (4) will be implicit within the proofs.
With this setup in mind, we can now present our index set calculations for separation principles.
We begin by considering the standard principles of $T_0,T_1,T_2$ and $T_3$ and finish by considering the non-integer separation principles of $T_{\frac 12}$ and $T_{2 \frac12}$.

\subsection{Index sets for $T_0,T_1,T_2$ and $T_3$}
We begin with the most basic topological principle of $T_0.$

\begin{theorem}
    The set $T_0$-$CSC = \{e : \text{$e$ is an index of a $T_0$ CSC space}\}$ is $\Pi^0_2$-complete within $CSC$.
\end{theorem}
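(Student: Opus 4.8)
The plan is to handle the two bounds separately, with essentially all of the work in the $\Pi^0_2$-hardness direction.

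\medskip
\noindent\emph{Upper bound.} Within $CSC$ the characteristic function of $\mathcal{U}$ is total, so ``$x \in U_i$'' is uniformly decidable there, and the $T_0$ condition reads $\forall x,y\,\exists i\,\big[x=y \vee ((x\in U_i)\leftrightarrow(y\notin U_i))\big]$, which is $\Pi^0_2$. To make this a genuine $\Pi^0_2$ predicate off of $CSC$ as well, let $\psi(e)$ assert that for all $x,y$ there are $i,s$ with $\Phi_{m_e,s}(i,x)$ and $\Phi_{m_e,s}(i,y)$ both halting and unequal, or $x=y$; then $\psi$ is $\Pi^0_2$, and on $CSC$ totality of $\Phi_{m_e}$ makes $\psi(e)$ equivalent to ``$X_e$ is $T_0$.'' Hence $T_0\text{-}CSC = CSC \cap \{e:\psi(e)\}$, an intersection of two $\Pi^0_2$ sets, so $T_0\text{-}CSC$ is $\Pi^0_2$ within $CSC$ by the preceding theorem that $CSC$ is $\Pi^0_2$.

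\medskip
\noindent\emph{Lower bound.} Fix an arbitrary $\Pi^0_2$ set $B$, written in normal form $B=\{e:\forall n\,\exists s\,\theta(e,n,s)\}$ with $\theta$ computable. Given $e$, build a CSC space $X_e$ on the points $\{x_n:n\in\omega\}\cup\{y_n:n\in\omega\}$ (identified with $\omega$), with subbasis $\mathcal{V}$ consisting of: the doubletons $D_n=\{x_n,y_n\}$ for each $n$; and, for each stage $s$, the set $V^{(s)}$ containing every $x_n$ together with exactly those $y_n$ for which no $s'\le s$ satisfies $\theta(e,n,s')$. Each $D_n$ and each $V^{(s)}$ is uniformly computable (membership in $V^{(s)}$ only runs $\theta$ for $s$ steps), and the construction is uniform in $e$; closing $\mathcal{V}$ under finite intersection gives $\mathcal{U}$ and a computable $k$ (any $U_i\cap U_j$ is again such an intersection, so $k(i,j,x)$ can name it), so the resulting index $f(e)$ lies in $CSC$ for \emph{every} $e$. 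To verify the reduction: two points are topologically indistinguishable iff they lie in exactly the same members of $\mathcal{V}$, so it suffices to inspect subbasic sets. For $m\neq n$, $D_n$ separates $x_n$ from $x_m$ and from $y_m$, and from $y_m$; so the only possible witnesses to non-$T_0$ are the pairs $\{x_n,y_n\}$. Each $D_m$ contains both or neither of $x_n,y_n$, while $V^{(s)}$ always contains $x_n$ and contains $y_n$ iff no witness for $n$ has appeared by stage $s$; hence $x_n,y_n$ are separated by a subbasic (equivalently, open) set iff $\theta(e,n,s)$ holds for some $s$. Therefore $X_e$ is $T_0$ iff every $n$ has such a witness, i.e.\ iff $e\in B$, giving $\Pi^0_2$-hardness within $CSC$.

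\medskip
\noindent I expect the main obstacle to be the computability bookkeeping in the lower bound rather than the topology. The naive construction would, for each $n$, introduce a single open set separating $x_n$ from $y_n$ as soon as a witness appears; but whether that set contains $y_n$ is only $\Sigma^0_1$, so $f(e)$ could fail to be a valid computably indexed CSC space precisely when $B$ fails. Replacing that set by the decreasing stage-indexed family $(V^{(s)})_s$ keeps every subbasic set decidable while preserving the key equivalence ``$x_n$ and $y_n$ become separated $\iff$ a witness for $n$ is ever found,'' which is exactly what forces $f(e)\in CSC$ unconditionally.
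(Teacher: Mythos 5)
Your proof is correct, and the core mechanism is the same as the paper's: both bound the complexity by the obvious two‑quantifier separation formula, and both get hardness by introducing, for each instance of a $\Sigma^0_1$ condition, a pair of points that become topologically distinguishable exactly when a witness appears, using a stage‑indexed (hence decidable) family of shrinking subbasic sets so that the index lands in $CSC$ unconditionally. The only cosmetic differences are that the paper reduces from the specific complete set $\mathrm{Tot}$ and anchors all its pairs at the single point $0$, whereas you reduce from an arbitrary $\Pi^0_2$ set in normal form with disjoint pairs $\{x_n,y_n\}$.
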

\begin{proof}
    The following $\Pi^0_2$ formula defines the $T_0$ spaces within $CSC$:
    \begin{equation*}
        \forall x, y (x \neq y \rightarrow \exists i ((x \in U_i \land y \notin U_i) \lor (x \notin U_i \land y \in U_i))).
    \end{equation*}
    Recall that $\mathrm{Tot} = \{e : \text{$\Phi_e$ is total}\}$ is $\Pi^0_2$-complete. Let $e \in \omega$. Define $\mathcal{V} = (V_i)_{i \in \omega}$ by
    \begin{align*}
        V_0 &= \omega\\
        V_{\left\langle x, s\right\rangle + 1} &= \begin{cases}
            \{x + 1\} & \text{if $\Phi_{e, s}(x) \downarrow$}\\
            \omega & \text{otherwise}
        \end{cases}
    \end{align*}
    for all $x, s \in \omega$. We claim the resulting CSC space $X_e$ is $T_0$ if and only if $e \in \mathrm{Tot}$.

    Suppose $e \in \mathrm{Tot}$. Let $x$ and $y$ be distinct non-zero natural numbers. Since $\Phi_e$ is total, there are $s$ and $t$ such that $\Phi_{e, s}(x-1) \downarrow$ and $\Phi_{e, t}(y-1) \downarrow$. By construction, $V_{\left\langle x-1, s \right\rangle} = \{x\}$ and $V_{\left\langle y-1, s \right\rangle} = \{y\}$, so $\{x\}$ and $\{y\}$ are open in $X_e$. Additionally, $V_{x-1, s}$ does not contain $0$. Conclude that the space $X_e$ is $T_0$.

    Suppose $e \notin \mathrm{Tot}$, and fix $x$ with $\Phi_e(x) \uparrow$. Note that the only open set containing $0$ is $\omega$. For all $s$, we have $V_{\left\langle x,s\right\rangle} = \omega$, so the only open set containing $x + 1$ is $\omega$. Deduce that the space $X_e$ is not $T_0$.

    We have thus shown that $X_e$ is $T_0$ if and only if $e \in \mathrm{Tot}$.
\end{proof}

An even simpler construction can be used to show that the index set of $T_1$ CSC spaces is $\Pi^0_2$-complete within $CSC$. The following theorem is a stronger result, since we will mandate that the space $X_e$ must always be $T_0$.

\begin{theorem}\label{thm:T1Completeness}
    The set $T_1$-$CSC = \{e : \text{$e$ is an index of a $T_1$ CSC space}\}$ is $\Pi^0_2$-complete within $T_0$-$CSC$.
\end{theorem}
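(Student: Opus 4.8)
The plan is to combine a direct $\Pi^0_2$ upper bound with a many-one reduction from $\mathrm{Tot}$, following the five-step template above but paying attention to the fact that the constructed space must always be $T_0$. For the upper bound, note that within $CSC$ the property of being $T_1$ is captured by the $\Pi^0_2$ sentence $\forall x\, \forall y\, (x \neq y \rightarrow \exists i\, (x \in U_i \land y \notin U_i))$; once $\Phi_m$ is known to be total, membership in $U_i$ is decidable, so this is genuinely $\Pi^0_2$. Since $T_1$-$CSC \subseteq T_0$-$CSC$, intersecting the corresponding $\Pi^0_2$ set with $T_0$-$CSC$ shows $T_1$-$CSC$ is $\Pi^0_2$ within $T_0$-$CSC$.

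For hardness, recall that $\mathrm{Tot}$ is $\Pi^0_2$-complete, and given $e$ I would build a CSC space $X_e$ on underlying set $\omega$ in which $0$ plays the role of a single ``point at infinity.'' Declare every singleton $\{x+1\}$ (for $x \in \omega$) to be a subbasic open set, so that every point other than $0$ is isolated, and let the subbasic open sets containing $0$ be
\[
U_s \;=\; \{0\} \cup \{x+1 : \Phi_{e,s}(x)\uparrow\} \qquad (s \in \omega).
\]
This subbasis is uniformly computable in $e$ and covers $\omega$, so closing it under finite intersection (step (3)) produces a CSC space $X_e$ whose index $f(e)$ is computable from $e$ (step (4)). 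Moreover $X_e$ is always $T_0$: of any two distinct points, at least one has the form $x+1$ and is isolated, so its singleton separates the pair; hence $f(e) \in T_0$-$CSC$ for every $e$.

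It then remains to verify that $X_e$ is $T_1$ if and only if $e \in \mathrm{Tot}$. Because every point $\neq 0$ is already isolated, the only way $T_1$ can fail is if no open neighbourhood of $0$ omits some point $x+1$. If $\Phi_e$ is total, then for each $x$ we may choose $s$ with $\Phi_{e,s}(x)\downarrow$, and then $U_s$ contains $0$ but not $x+1$, so $X_e$ is $T_1$. Conversely, if $\Phi_e(x)\uparrow$ for some $x$, then $x+1 \in U_s$ for every $s$; since the $U_s$ are the only subbasic opens containing $0$ and are nested (so $U_{s_1} \cap \cdots \cap U_{s_n} = U_{\max_i s_i}$), every basic — hence every — open set containing $0$ contains $x+1$, and $X_e$ is not $T_1$. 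Thus $e \mapsto f(e)$ witnesses $\Pi^0_2$-hardness within $T_0$-$CSC$. The only point requiring genuine care — and the reason for pairing the isolated points with the single generic point $0$ — is to decouple $T_0$-separation, which must hold for all $e$ so that $f(e)$ lands in $T_0$-$CSC$, from the $T_1$-separation of $0$ from the rest, which is exactly what encodes totality; pinning down which open sets contain $0$ relies on the nesting identity $U_{s_1} \cap \cdots \cap U_{s_n} = U_{\max_i s_i}$.
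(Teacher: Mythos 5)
Your proof is correct. The upper bound is the same as the paper's (your single formula $\forall x\,\forall y\,(x\neq y\rightarrow\exists i\,(x\in U_i\wedge y\notin U_i))$, quantified over ordered pairs, is equivalent to the paper's two-witness version), and the hardness argument follows the same template --- reduce from $\mathrm{Tot}$, build a space that is unconditionally $T_0$ --- but with a genuinely different witness space. The paper takes the tails $[x,\infty)$ as subbasic sets together with singletons $\{x\}$ that appear only when $\Phi_e(x)$ converges; there the $T_0$ separation is supplied by the tails, and a divergent $\Phi_e(x)$ breaks $T_1$ \emph{at the point $x$ itself}, since every neighbourhood of $x$ then contains the whole tail. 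You instead make every point except a designated point $0$ unconditionally isolated and concentrate all potential failure of $T_1$ at $0$, whose neighbourhood filter is generated by the shrinking co-approximations $U_s=\{0\}\cup\{x+1:\Phi_{e,s}(x)\uparrow\}$. Both constructions work; yours has the mild advantage that the nesting $U_{s_1}\cap\cdots\cap U_{s_n}=U_{\max_i s_i}$ makes the analysis of basic open sets around the one non-trivial point immediate, while the paper's has the feature that no auxiliary ``point at infinity'' is needed and the same tail-based scaffolding recurs in its later constructions. Your closing remarks correctly identify the one point that needs care, namely that $T_0$ must hold for every $e$ while $T_1$ must be exactly as hard as totality.
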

\begin{proof}
    The following $\Pi^0_2$ formula defines the $T_1$ spaces within $T_0$-$CSC$:
    \begin{equation*}
        \forall x, y (x \neq y \rightarrow \exists i, j(x \in U_i \land y \in U_j \land x \notin U_j \land y \notin U_i)).
    \end{equation*}
    We again use the $\Pi^0_2$-completeness of $\mathrm{Tot}$. Let $e \in \omega$. Define $\mathcal{V} = (V_i)_{i \in \omega}$ by
    \begin{align*}
        V_{2x} &= [x, \infty) = \{z \in \omega : z \geq x\}\\
        V_{2\left\langle x, s \right\rangle + 1} &= \begin{cases}
            \{x\} & \text{if $\Phi_{e, s}(x)\downarrow$}\\
            \omega & \text{otherwise}
        \end{cases}
    \end{align*}
    for all $x, s \in \omega$. We claim the resulting CSC space $X_e$ is always $T_0$, and is $T_1$ if and only if $e \in \mathrm{Tot}$. To show that $X_e$ is $T_0$, see that if $x < y$, then $V_{2y}$ is an open set containing $y$ and not $x$.

    Suppose $e \in \mathrm{Tot}$, and let $x \neq y$. Since $\Phi_e$ is total, there are $s$ and $t$ such that $\Phi_{e, s}(x) \downarrow$ and $\Phi_{e, t}(y) \downarrow$. Then $V_{2\left\langle x, s \right\rangle + 1} = \{x\}$ and $V_{2\left\langle y, t \right\rangle + 1} = \{y\}$, so $X_e$ is $T_1$.

    Suppose $e \notin \mathrm{Tot}$, and fix $x$ with $\Phi_e(x) \uparrow$. By construction, any open set containing $x$ must contain $[x, \infty)$, so $X_e$ is not $T_1$.
\end{proof}

Unlike the $T_0$ and $T_1$ levels, Hausdorff spaces lie higher up in the arithmetic hierarchy at $\Pi_3^0$.
Our construction here will always produce a discrete space, so it also shows that the property of being discrete is $\Pi_3^0$-hard within $T_1$-$CSC$.
We will see a strengthening of the result regarding discrete spaces in the next section.

\begin{theorem}\label{thm:T2Completeness}
    Let $T_2$-$CSC$ be the index set of Hausdorff CSC spaces, and let $Disc$-$CSC$ be the index set of discrete CSC spaces. Both $T_2$-$CSC$ and $Disc$-$CSC$ are $\Pi^0_3$-complete within $T_1$-$CSC$.
\end{theorem}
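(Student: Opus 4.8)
The plan is to establish the $\Pi^0_3$ upper bound and the $\Pi^0_3$-hardness separately, with a \emph{single} reduction settling the hardness of both index sets at once: since every discrete space is Hausdorff and every Hausdorff CSC space is $T_1$, it suffices to produce, uniformly in $e$, a $T_1$ CSC space $X_e$ that is discrete in the ``positive'' case and fails to be Hausdorff in the ``negative'' case. For the upper bound, the sentence
\[
  \forall x, y\,\bigl(x \neq y \rightarrow \exists i, j\,(x \in U_i \land y \in U_j \land \forall z\,(z \notin U_i \lor z \notin U_j))\bigr)
\]
defines $T_2$-$CSC$ within $CSC$; since $x \in U_i$ is decidable on $CSC$ indices, the matrix after $\exists i,j$ is $\Pi^0_1$, so the whole sentence is $\Pi^0_3$. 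Likewise $\forall x\, \exists i\,(x \in U_i \land \forall z\,(z \in U_i \rightarrow z = x))$ is a $\Pi^0_3$ definition of $Disc$-$CSC$ within $CSC$, and since $T_2$-$CSC$ and $Disc$-$CSC$ are contained in $T_1$-$CSC \subseteq CSC$, these also work ``within $T_1$-$CSC$''.

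For hardness I would reduce the (standard) $\Pi^0_3$-complete set $P = \{e : \forall x\,(W_e^{[x]}\text{ is finite})\}$, where $W_e^{[x]} = \{y : \langle x, y\rangle \in W_e\}$. Given $e$, build $X_e$ as the topological disjoint sum of gadgets $G_x$, one per column, where $G_x$ depends only on the c.e. set $W := W_e^{[x]}$; code $G_x$ inside the $x$-th column of $\omega$ with prospective points $p_x$, $q_x$, and auxiliary points $t^x_0, t^x_1, \dots$. Call $t^x_k$ \emph{active} once $|W_{e,s}^{[x]}| > k$ at the current stage $s$. The subbasis $\mathcal{V}$ consists, for each $x$, of the sets $A^x_N := \{p_x\} \cup \{t^x_k : k \geq N\}$ and $B^x_N := \{q_x\} \cup \{t^x_k : k \geq N\}$ (for all $N$), together with the singleton $\{t^x_k\}$, enumerated into $\mathcal V$ (with delay) exactly when $t^x_k$ becomes active; then close under finite intersections for the base and the computable $k$ function. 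The crucial device is that $t^x_k$ enters the underlying set of $X_e$ only if it is ever activated, whereas $A^x_N$ and $B^x_N$ are allowed to contain, as bare syntactic sets, \emph{every} $t^x_k$ with $k \ge N$, including ones that never become points; this keeps $\mathcal V$ uniformly computable in $e$ while producing the right limiting behavior.

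Then one verifies: (i) $X_e$ is a CSC space and is $T_1$ regardless of $e$, since each point lies in a subbasic set supported in its own column, separating it from every other point; (ii) if $W_e^{[x]}$ is finite of size $\ell$, then $t^x_0, \dots, t^x_{\ell-1}$ are the only auxiliary points of $G_x$ present in $X_e$, and within the space $A^x_\ell = \{p_x\}$ and $B^x_\ell = \{q_x\}$, so $G_x$ is discrete — hence $e \in P$ forces $X_e$ discrete, so $f(e) \in Disc$-$CSC \subseteq T_2$-$CSC$; (iii) if $W_e^{[x]}$ is infinite, every $t^x_k$ is a point, every open neighborhood of $p_x$ contains some $A^x_N$, every open neighborhood of $q_x$ contains some $B^x_M$, and $A^x_N \cap B^x_M = \{t^x_k : k \ge \max(N, M)\} \neq \emptyset$, so $p_x$ and $q_x$ have no disjoint neighborhoods — hence $e \notin P$ forces $X_e$ non-Hausdorff, so $f(e) \notin T_2$-$CSC$ and a fortiori $f(e) \notin Disc$-$CSC$. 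By the $s$-$m$-$n$ theorem the map $e \mapsto f(e)$ is computable, so it witnesses $\Pi^0_3$-hardness of both $T_2$-$CSC$ and $Disc$-$CSC$ within $T_1$-$CSC$.

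The main obstacle is designing the gadget so that it is genuinely discrete when $W$ is finite — which requires the unused auxiliary points to be entirely absent from the space — while simultaneously keeping the subbasis uniformly computable in $e$ and the neighborhood filters of $p_x$ and $q_x$ shrinking correctly. Allowing basic open sets to carry ``junk'' indices that never become points is precisely what reconciles these competing demands; once the gadget is set up this way, the verifications in (i)–(iii) are routine.
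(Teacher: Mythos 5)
Your upper bound and the overall shape of the reduction (one uniformly computable $T_1$ space that is discrete in the positive case and non-Hausdorff in the negative case, settling both index sets at once) match the paper, which reduces $\mathrm{CoInf}$ rather than your $P=\{e:\forall x\,(W_e^{[x]}\text{ finite})\}$; either $\Pi^0_3$-complete set would do. But your ``crucial device'' has a genuine gap. An index for a CSC space is just a total characteristic function deciding $x\in U_i$; there is no separate domain predicate, so the underlying set of the coded space is forced to be (the union of) whatever the sets $U_i$ actually contain. You cannot have $A^x_N$ contain $t^x_k$ ``as a bare syntactic set'' without $t^x_k$ thereby being a point of $X_e$. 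Making membership of $t^x_k$ in $A^x_N$ conditional on eventual activation is not an option either, since ``$t^x_k$ is ever activated'' is a $\Sigma^0_1$-complete question and $\Phi_m$ must be total and computable. So in your construction every $t^x_k$ is a point of the space, activated or not.

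This kills verifications (i) and (ii). If $W_e^{[x]}$ is finite of size $\ell$, the points $t^x_k$ for $k\geq\ell$ are never activated, so no singleton $\{t^x_k\}$ is ever added; the only subbasic sets containing such a $t^x_k$ are the $A^x_N$ and $B^x_N$ with $N\leq k$, whose finite intersections all contain the whole tail $\{t^x_j: j\geq k\}$. Hence $G_x$ is not discrete, and worse, every open set containing $t^x_\ell$ contains $t^x_{\ell+1}$, so $X_e$ is not even $T_1$ --- the reduction does not land in $T_1$-$CSC$, which is required by the definition of hardness \emph{within} $T_1$-$CSC$. The repair is exactly the paper's trick: index the auxiliary points by \emph{stages} rather than by position, so that ``$t^x_s$ is a point'' means ``$W_e^{[x]}$ receives a new element at stage $s$,'' a decidable condition. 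Then $A^x_N=\{p_x\}\cup\{t^x_s: s\geq N,\ s\text{ an enumeration stage}\}$ is genuinely computable with no phantom members, every auxiliary point can be isolated outright, and your case analysis (finite $\Rightarrow$ some $A^x_N=\{p_x\}$, $B^x_N=\{q_x\}$, discrete; infinite $\Rightarrow$ overlapping tails, non-Hausdorff) goes through. The paper's own construction avoids the issue differently, by working on underlying set $\omega$ with sets like $\{x\}\cup\{s:\Phi_{e,s}(y)\downarrow=1\}$, whose membership is decidable outright.
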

\begin{proof}
    The following $\Pi^0_3$ formula defines the Hausdorff spaces within $T_1$-$CSC$:
    \begin{equation*}
        \forall x, y \exists i, j (x \in U_i \land y \in U_j \land \forall z (z \in U_i \rightarrow z \notin U_j)).
    \end{equation*}
    The following $\Pi^0_3$ formula defines the discrete spaces within $T_1$-$CSC$:
    \begin{equation*}
        \forall x \exists i \forall y (y \in U_i \leftrightarrow x = y).
    \end{equation*}
    Recall that the set $\mathrm{CoInf} = \{e : \text{$W_e$ is coinfinite}\}$ is $\Pi^0_3$-complete. Let $e \in \omega$. Define $\mathcal{V} = (V_i)_{i \in \omega}$ by
    \begin{align*}
        V_{2\left\langle x, y \right\rangle} &= \{x\} \cup [y, \infty)\\
        V_{2\left\langle x, y \right\rangle + 1} &= \begin{cases}
            \{x\} \cup \{s : \Phi_{e, s}(y) \downarrow=1\} & \text{if $x \leq y$}\\
            \omega & \text{otherwise}
        \end{cases}
    \end{align*}
    for all $x, y \in \omega$. We claim the resulting CSC space $X_e$ is always $T_1$, discrete if $e \in \mathrm{CoInf}$, and not Hausdorff if $e \notin \mathrm{CoInf}$.

    Let $x < y$. Then $V_{2\left\langle x, y + 1 \right\rangle}$ is an open set containing $x$ and not $y$, and $V_{2\left\langle y, y \right\rangle}$ is an open set containing $y$ and not $x$. Thus $X_e$ is $T_1$.

    Suppose $e \in \mathrm{CoInf}$. We show $X_e$ is discrete and hence also Hausdorff. Let $x \in \omega$. Since $W_e$ is coinfinite, there is $y \geq x$ such that $\Phi_e(y) \uparrow$ or $\Phi_e(y) \downarrow=0$. It follows that $V_{2\left\langle x, y \right\rangle + 1} = \{x\}$. Thus $\{x\}$ is open for all $x \in \omega$, so $X_e$ is discrete and Hausdorff.

    Suppose $e \notin \mathrm{CoInf}$. We show $X_e$ is not Hausdorff and hence also not discrete. Since $W_e$ is cofinite, we can fix $x \in \omega$ such that $\Phi_e(y) \downarrow=1$ for all $y \geq x$. Then for all $y \geq x$, there is $s$ such that $\Phi_{e, s}(y) \downarrow$, so $V_{2\left\langle x, y\right\rangle + 1}$ is cofinite. Thus, every open set containing $x$ is cofinite. The same argument applies to $x + 1$; every open set containing $x + 1$ is cofinite. It follows that there do not exist disjoint open sets separating $x$ and $x + 1$, so $X_e$ is neither Hausdorff nor discrete.
\end{proof}

We now consider the regular CSC spaces.
As mentioned previously, this set sits at the $\Pi_5^0$ level, so it requires more detailed consideration than the arguments up to this point. 
The upper bound of $\Pi_5^0$ does not come immediately from the basic definition of regular spaces.
This definition usually quantifies over the closed sets of the topological space.
Note that this is a real number (second-order) quantifier as a general closed set $C$ is coded by the countable set of opens disjoint from $C$.
The way to get around this issue is to find an equivalent definition of regularity that is stated fully in terms of the basic open sets.
This is achieved by the following lemma. 

\begin{lemma}
    A CSC space $(X, \mathcal{U}, k)$ is regular if and only if for all $x \in X$ and $i \in \omega$ with $x \in U_i$, there is $j \in \omega$ such that $x \in U_j$ and $\overline{U_j} \subseteq U_i$, where $\overline{U_j}$ denotes the closure of $U_j$.
\end{lemma}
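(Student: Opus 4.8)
The plan is to derive the stated characterization from the classical fact that a topological space is regular precisely when each point has a neighborhood basis of closed neighborhoods; the only extra ingredient is that $\mathcal{U}$ is a base, which lets me pass freely between arbitrary open neighborhoods and basic ones. Throughout, ``regular'' means that a point and a closed set not containing it can be separated by disjoint open sets (no $T_1$ hypothesis is needed), and $\overline{A}$ is used purely as a set, namely the smallest closed set containing $A$; the elementary identities relating open sets, closed sets, and complements that I invoke all hold in an arbitrary topological space.

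For the right-to-left direction, suppose the basic-open condition holds, and let $x \in X$ and let $C \subseteq X$ be closed with $x \notin C$. Then $X \setminus C$ is an open neighborhood of $x$, so because $\mathcal{U}$ is a base there is $i$ with $x \in U_i \subseteq X \setminus C$. Applying the hypothesis to $x$ and $U_i$ yields $j$ with $x \in U_j$ and $\overline{U_j} \subseteq U_i$. Then $U_j$ and $X \setminus \overline{U_j}$ are disjoint open sets, $x \in U_j$, and $C \subseteq X \setminus U_i \subseteq X \setminus \overline{U_j}$; hence $X$ is regular.

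For the left-to-right direction, suppose $X$ is regular, and let $x \in X$ and $i \in \omega$ with $x \in U_i$. The set $C := X \setminus U_i$ is closed and omits $x$, so regularity gives disjoint open sets $V$ and $O$ with $x \in V$ and $C \subseteq O$. From $V \cap O = \emptyset$ we get $V \subseteq X \setminus O$, and $X \setminus O$ is closed, so $\overline{V} \subseteq X \setminus O \subseteq X \setminus C = U_i$. Finally, $V$ is an open neighborhood of $x$, so since $\mathcal{U}$ is a base there is $j$ with $x \in U_j \subseteq V$, and then $\overline{U_j} \subseteq \overline{V} \subseteq U_i$, which is exactly what the condition requires.

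I do not anticipate a serious obstacle: the argument is the standard one, and the CSC setting introduces no complications beyond keeping track of the fact that the witnessing neighborhoods must be drawn from the base $\mathcal{U}$ rather than being arbitrary open sets. The one point worth stating explicitly (and the closest thing to a subtlety) is that these manipulations are carried out in a possibly non-$T_0$ space, so one should resist any temptation to identify $\overline{\{x\}}$ with $\{x\}$; nothing in the proof does so.
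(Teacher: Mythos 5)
Your proof is correct and follows essentially the same route as the paper's: both directions take the complement of the relevant set, invoke the base to pass to a basic open neighborhood, and use disjointness of the separating open sets. The only difference is cosmetic — in the forward direction you conclude $\overline{V}\subseteq X\setminus O\subseteq U_i$ directly from the fact that $X\setminus O$ is closed, whereas the paper reaches the same containment by a short contradiction argument about limit points; your version is marginally cleaner but not a different idea.
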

\begin{proof}
    Suppose that for all $x \in X$ and $i \in \omega$ with $x \in U_i$, there is $j \in \omega$ such that $x \in U_j$ and $\overline{U_j} \subseteq U_i$. Let $x \in X$ and let $F$ be a closed set in $X$ with $x \notin F$. Then $X \setminus F$ is open, so pick $i \in \omega$ such that $x \in U_i \subseteq X \setminus F$. By assumption, there is $j \in \omega$ with $x \in U_j$ and $\overline{U_j} \subseteq U_i$.

    Let $U = U_j$ and $V = X \setminus \overline{U_j}$. Then $U$ and $V$ are disjoint open sets, and $x \in U$. We also have $F \subseteq F \setminus U_i \subseteq X \setminus U_i \subseteq X \setminus \overline{U_j} = V$, so $F \subseteq V$. Hence $X$ is regular.

    Conversely, suppose $X$ is a regular CSC space, and let $x \in U_i$ for some $x \in X$ and $i \in \omega$. Let $F = X \setminus U_i$. By regularity, there are disjoint open sets $U$ and $V$ with $x \in U$ and $F \subseteq V$. Since $U$ and $V$ are disjoint and $X \setminus U_i \subseteq V$, it must be that $U \subseteq U_i$. Fix $j$ such that $x \in U_j \subseteq U$. We show $\overline{U_j} \subseteq U_i$.

    Suppose there is $y \in \overline{U_j} \setminus U_i$. Then $y \notin U_i$, so $y \in F \subseteq V$. As $U$ and $V$ are disjoint, this means $y \notin U$. Since $U_j \subseteq U_i$, it must be that $y$ is a limit point of $U_j$, meaning every open set containing $y$ contains an element of $U_j$ distinct from $y$. This is a contradiction; $V$ is an open set containing $y$, but $V$ is disjoint from $U$ and thus cannot contain an element of $U_j$. Therefore $\overline{U_j} \subseteq U_i$ as required.
\end{proof}

Direct translation of the definition implied by this lemma gives out desired upper bound for the complexity of regularity.

\begin{proposition}
The $T_3$ spaces are $\Pi_5^0$.
\end{proposition}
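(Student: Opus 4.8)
The plan is to take the characterization of regularity furnished by the preceding lemma and count quantifiers carefully, being mindful that the closure operator itself hides an unbounded quantifier. Recall the lemma says $X$ is regular iff
\begin{equation*}
    \forall x\,\forall i\,\big(x \in U_i \rightarrow \exists j\,(x \in U_j \land \overline{U_j} \subseteq U_i)\big).
\end{equation*}
So first I would unfold $\overline{U_j} \subseteq U_i$ into arithmetic. A point $y$ lies in $\overline{U_j}$ exactly when every basic open set containing $y$ meets $U_j$, i.e. $\forall \ell\,(y \in U_\ell \rightarrow \exists z\,(z \in U_\ell \land z \in U_j))$. This is a $\Pi_2^0$ predicate in $y, j$ (the matter ``$x\in U_i$'' is $\Delta_1^0$ since $\Phi_m$ is total on an index for a CSC space). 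Then $\overline{U_j}\subseteq U_i$ becomes $\forall y\,(y \in \overline{U_j} \rightarrow y \in U_i)$, which prepends a universal number quantifier to a $\Pi_2^0$ matrix (the consequent $y \in U_i$ is $\Delta_1^0$), yielding a $\Pi_2^0$ predicate in $i, j$ overall.

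Next I would assemble the full formula. Inside the scope of $\exists j$ we have the conjunction of ``$x \in U_j$'' (which is $\Delta_1^0$) with the $\Pi_2^0$ predicate ``$\overline{U_j}\subseteq U_i$''; a conjunction of $\Delta_1^0$ and $\Pi_2^0$ is $\Pi_2^0$. Prefixing $\exists j$ gives $\Sigma_3^0$. The antecedent $x \in U_i$ is $\Delta_1^0$, so the implication $x \in U_i \rightarrow \exists j(\cdots)$ is $\Sigma_3^0$. Finally the two leading universal quantifiers $\forall x\,\forall i$ turn a $\Sigma_3^0$ matrix into $\Pi_4^0$.

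This gives a bound of $\Pi_4^0$, which is actually \emph{better} than the claimed $\Pi_5^0$; since $\Pi_4^0 \subseteq \Pi_5^0$, the proposition as stated follows a fortiori, but I would flag this discrepancy, as the authors likely intend a coarser count (perhaps treating membership in $U_i$ as given by a $\Sigma_1^0/\Pi_1^0$ predicate rather than $\Delta_1^0$, or double-counting the closure as $\Pi_3^0$). In any case the proof is just this quantifier bookkeeping: write down the arithmetized sentence, unwind the closure, and count. The only subtlety — and the single place one could go wrong — is correctly handling the closure operator, making sure it is read as ``every basic open neighborhood of $y$ meets $U_j$'' rather than a second-order statement about arbitrary open sets, so that it stays $\Pi_2^0$ and does not secretly smuggle in a set quantifier. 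Once that is pinned down, the arithmetic complexity falls out mechanically, and combining with the lower bound (to be proved separately) will yield completeness.
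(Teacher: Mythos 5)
Your overall approach---unwind the closure into a first-order statement about basic opens and count quantifiers---is exactly the paper's, but your count contains an error that leads you to the false conclusion that the bound improves to $\Pi^0_4$. The mistake is the complexity you assign to $\overline{U_j}\subseteq U_i$. You correctly observe that $y\in\overline{U_j}$ is $\Pi^0_2$ (namely $\forall\ell\,(y\in U_\ell\rightarrow\exists z\,(z\in U_\ell\land z\in U_j))$), but in the formula $\forall y\,(y\in\overline{U_j}\rightarrow y\in U_i)$ this $\Pi^0_2$ predicate sits in the \emph{antecedent} of an implication, i.e.\ in negative position. The matrix $y\in\overline{U_j}\rightarrow y\in U_i$ is therefore equivalent to $\neg(\Pi^0_2)\lor\Delta^0_1$, which is $\Sigma^0_2$, and prefixing $\forall y$ yields $\Pi^0_3$, not $\Pi^0_2$. (Writing the containment contrapositively, as ``every $y\notin U_i$ has a basic neighborhood disjoint from $U_j$,'' gives the same $\forall\exists\forall$ shape.) With the corrected count, $\exists j\,(x\in U_j\land\overline{U_j}\subseteq U_i)$ is $\exists j\,(\Delta^0_1\land\Pi^0_3)$, which is $\Sigma^0_4$, and the outer $\forall x\,\forall i$ gives $\Pi^0_5$---precisely the paper's bound.

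The discrepancy you flag is therefore not the authors counting coarsely: $\Pi^0_5$ is what the formula actually yields, and your $\Pi^0_4$ bound cannot be correct, since Theorem~\ref{thm:T3Completeness} shows $T_3$-$CSC$ is $\Pi^0_5$-hard within $T_2$-$CSC$ and hence not $\Pi^0_4$. Once the polarity of the closure predicate is handled correctly, your argument coincides with the paper's proof.
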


\begin{proof}
By the above lemma, saying a space is regular is equivalent to saying that given a point $x$ in a basic open $U_i$, there is a basic open $U_j$ such that $x\in U_j$ and $\overline{U_j}\subseteq U_i$.
In other words, the following sentence characterizes the $T_3$ spaces
\[\forall x,i \exists j ~ x\in U_j \land \forall y (\forall \ell ~ y\in U_\ell \to \exists z ~ z\in U_\ell \land  z\in U_j) \to y\in U_i. \]
\end{proof}

We now prove the desired hardness result, demonstrating that this definition is best possible.

\begin{theorem}\label{thm:T3Completeness}
    Let $T_3$-$CSC$ be the set of regular $CSC$ spaces.
    $T_3$-$CSC$ is $\Pi_5^0$ complete within within $T_2$-$CSC$.
\end{theorem}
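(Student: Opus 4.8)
The upper bound is already in hand: the preceding proposition exhibits a $\Pi_5^0$ sentence defining $T_3$-$CSC$ within $CSC$, and intersecting the corresponding $\Pi_5^0$ set with $T_2$-$CSC$ shows $T_3$-$CSC$ is $\Pi_5^0$ within $T_2$-$CSC$. So the content is the hardness. The plan is to follow the quantifier shape of the regularity sentence from that proposition, namely $\forall x, i\,\exists j\,\forall y\,[\,y\in\overline{U_j}\to y\in U_i\,]$ with ``$y\in\overline{U_j}$'' a $\Pi_2^0$ matrix, and build $X_e$ so that its regularity transparently codes an arbitrary $\Pi_5^0$ predicate across those five alternations. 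Concretely I would fix an arbitrary $\Pi_5^0$ set and, using that $\mathrm{Inf}=\{e:W_e\text{ infinite}\}$ is $\Pi_2^0$-complete, present it in the form $\{e:\forall n\,(\{m:\exists k\,(W_{h(e,n,m,k)}\text{ is infinite})\}\text{ is finite})\}$ for a computable $h$; a routine padding argument (``$W$ infinite'' is $\Pi_2^0$, ``$\exists k$'' pushes to $\Sigma_3^0$, ``only finitely many such $m$'' pushes to $\Sigma_4^0$, the outer $\forall n$ to $\Pi_5^0$) shows every $\Pi_5^0$ set has such a presentation.

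The space $X_e$ is a disjoint union of gadgets indexed by $n$, each organized around a single non-isolated ``cluster point'' $c_n$. For a finite set $F\subseteq\omega$ the basic open $V_{n,F}$ is declared to contain $c_n$ together with the isolated ``pool points'' $b_{n,m,k,t,s}$ for all $m\notin F$, all $k$, and all $(t,s)$ with $t\in W_{h(e,n,m,k),s}$; these $V_{n,F}$ form a neighborhood basis at $c_n$. The key bookkeeping is that $V_{n,F}$ \emph{omits} the pools named by $F$ rather than including them, so that $V_{n,F}\cap V_{n,G}=V_{n,F\cup G}$ (the basis is closed under intersection) while still $V_{n,F}\supsetneq\{c_n\}$ always, because cofinitely many coordinates $m$ survive — so $c_n$ is never isolated. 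For each $m,k$ one adds an ``escape point'' $z_{n,m,k}$ lying outside every $V_{n,F}$, with its own basis $N_{n,m,k,t}=\{z_{n,m,k}\}\cup\bigcup_{t'\ge t}\{b_{n,m,k,t',s}:s\in\omega\}$. A short computation then gives $z_{n,m,k}\in\overline{V_{n,F}}$ exactly when $m\notin F$ and $W_{h(e,n,m,k)}$ is infinite, and hence $\overline{V_{n,G}}\subseteq V_{n,F}$ exactly when $G\supseteq F$ and $W_{h(e,n,m,k)}$ is finite for all $m\notin G$ and all $k$; here one checks that no stray limit points enter $\overline{V_{n,G}}$, which follows from the gadgets being mutually independent (the pool points carry enough indices that any basic neighborhood of a $z$ or $c$ meets only the pools it should). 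Therefore $X_e$ is regular at $c_n$ precisely when every finite $F$ can be enlarged to a finite $G$ absorbing all ``bad'' coordinates $m$, i.e.\ precisely when there are only finitely many $m$ with $\exists k\,(W_{h(e,n,m,k)}\text{ infinite})$ — exactly the $n$-th conjunct of the chosen $\Pi_5^0$ predicate.

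To conclude I would check that $X_e$ is always Hausdorff: the only non-isolated points are the $c_n$ and the $z_{n,m,k}$, and each is separated from everything else because its basic neighborhoods stay inside a single gadget and the pools of distinct gadgets and of distinct escape points are disjoint. I would also note that regularity of all of $X_e$ reduces to regularity at the cluster points, since each $z_{n,m,k}$ has $\overline{N_{n,m,k,t}}=N_{n,m,k,t}$, i.e.\ a clopen neighborhood basis, and all other points are isolated. Combining this with the uniformity of the construction (the base of $X_e$ is uniformly computable in $e$, so the index $\langle m_e,n_e\rangle$ is computable from $e$) yields the reduction $f$ with $f(e)\in T_2$-$CSC$ for every $e$ and $f(e)\in T_3$-$CSC$ iff $e$ belongs to the fixed $\Pi_5^0$ set. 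I expect the main obstacle to be exactly the tension highlighted above: engineering an intersection-closed neighborhood basis at $c_n$ in which no member is small enough to be forced closed — which would make regularity at $c_n$ hold trivially and collapse the construction — while simultaneously steering each of the infinitely many escape points $z_{n,m,k}$ through the correct family of $V_{n,F}$'s and preventing any gadget from polluting the closures inside another.
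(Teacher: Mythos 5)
Your construction is correct and is essentially the paper's proof in different notation: your cluster points $c_n$, escape points $z_{n,m,k}$, and pools correspond exactly to the paper's points $x^*$, the potential limit points $0_c$, and the isolated parts of the gadgets $\mathcal{V}_{f(a,c)}$, and in both arguments regularity at the cluster point fails precisely when infinitely many escape points, all excluded from its neighborhoods, are nonetheless limit points of every such neighborhood because the associated c.e.\ sets are infinite. The remaining differences are cosmetic: neighborhoods indexed by finite exclusion sets $F$ rather than tails, and an extra $\exists k$ absorbed into the $\Sigma^0_4$ normal form.
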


\begin{proof}
    Our construction is built out of several basic modules of increasing complexity.
    Given $e$ coding a c.e.\ set $W_e$, we construct the following CSC space.
    We follow the convention that $\Phi_e$ enumerates at most one element into $W_e$ at each stage.
    Let $W_{e,s}^*$ be the computable set of stages at least $s$ where $W_e$ gets a new element.
    We construct the topology $\mathcal{V}_e=(V_i)_{i\in\omega}$ on $W_{e,0}^*\cup\{0\}$ where
    \[x\in V_{2s} \iff x\in W_{e,s}^*\cup\{0\} \text{   and   } V_{2s+1}=W_{e,s+1}^*-W_{e,s}^*.\]

    \begin{claim}\label{claim:basicUnit}
        The element represented by $0$ in $\mathcal{V}_e$ is isolated if and only if $W_e$ is finite.
    \end{claim}

    \begin{proof}
    If $W_e$ is finite, then there is some stage $s$ where $W_{e,s}^*$ is empty.
    In particular, $W_{e,s}^*\cup\{0\}=\{0\}$ isolates the point $0$.

    If $W_e$ is infinite, then every open set around $0$ contains a subset of the form $W_{e,s}^*\cup\{0\}$.
    Furthermore, each of the $W_{e,s}^*$ is infinite.
    If one were finite, observe that $|W_e|=|W_{e,s}\cup W^*_{e,s}|$, and the right-hand side would be finite, a contradiction to the assumption that $W_e$ is infinite.
    This means that $0$ is not isolated; in other words, it is a limit point, as every open set around $0$ contains another element.
    \end{proof}

    It is worth explicitly noting that, by construction, all of the points that are not $0$ are always isolated in $\mathcal{V}_e$.

    Geometrically, $\mathcal{V}_e$ can be thought of as the point $0$ being placed at the left end point of $[0,1]$ while the $n^{th}$ point enumerated into $W_e$ adds in the element $\frac{1}{2^n}$.
    While $0$ remains isolated after only finitely many of these actions, it becomes a limit point after infinitely many.

    We now construct a larger basic unit of the construction.
    This unit will be made up of countably many $\mathcal{V}_e$ topologies and an additional point that we call $x^*$.
    The topology will be regular at $x^*$ ---  meaning that if $x^*\in U_i$, there is a $U_j$ with $x^*\in U_j$ and $\overline{U_j}\subseteq U_i$ --- if and only if a $\Sigma_4^0$ condition is met.

    Let $A$ be a $\Sigma^0_4$ set and $f(a,c)$ be a computable function such that
    \[a\in A\iff \exists b\forall c\geq b ~ W_{f(a,c)} \text{ is finite.}\]
    Fix $a \in \omega$. Let $\mathcal{U}_{a,A}$ be the topology consisting of the basis for the topologies $\mathcal{V}_{f(a,c)}$ for each $c\in\omega$ and the following additional open sets:
    \[U_i=\{x^*\}\cup \bigcup_{j\geq i}\big(\mathcal{V}_{f(a,j)}-0_{j}\big).\]
    In the above, the notation $0_j$ denotes the element $0$ in the copy of $\mathcal{V}_{f(a,j)}$.
    In other words, $\mathcal{V}_{f(a,j)}-0_j$ is the space $\mathcal{V}_{f(a,j)}$ without the element that may or may not be a limit point of the other elements in $\mathcal{V}_{f(a,j)}$.
    In short, $\mathcal{U}_{a,A}$ is the disjoint union of the $\mathcal{V}_{f(a,c)}$ along with an extra point $x^*$ that is ``at the limit'' of the $\mathcal{V}_{f(a,c)}-0_c$.

    \begin{claim}\label{claim:biggerUnit}
    $\mathcal{U}_{a,A}$ is regular at $x^*$ if and only if $a\in A$.

    \end{claim}

    \begin{proof}
    Say that $a\in A$.
    Let $b'$ be the witness showing that $\forall c\geq b'~W_{f(a,c)} \text{ is finite.}$
    Let $U_i$ be a basic open set containing $x^*$.
    We find a basic open set containing $x^*$ whose closure is in $U_i$.
    Let $i'=\max(i,b')$.
    We claim that $\overline{U_{i'}}=U_{i'}$.
    This is enough to witness the desired properties as $x^*\in U_{i'}\subseteq U_i$ by construction.
    Let $x\not\in U_{i'}$. We must show that $x\not\in\overline{U_{i'}}$, or equivalently, we must find an open set $V_x$ with $x\in V_x$ and $V_x\cap U_{i'}=\emptyset$.
    Say that $x\in \mathcal{V}_{f(a,c)}$ with $c<i'$.
    The set $\mathcal{V}_{f(a,c)}$ is then itself such a $V_x$.
    On the other hand, say that $x\in \mathcal{V}_{f(a,c)}$ with $c\geq i'$.
    Because $x\not\in U_{i'}$, this means that $x=0_c$ with $c\geq i'$.
    Because $W_{f(a,c)}$ is finite, this gives that, by Claim \ref{claim:basicUnit}, $0_c$ is isolated in $\mathcal{V}_{f(a,c)}$ and therefore is also isolated in $\mathcal{U}_{a,A}$.
    This means that $\{0_c\}=V_x$ is an acceptable witness.

    Say that $a\not\in A$.
    We claim that no open set $X\subseteq U_0$ containing $x^*$ has $\overline{X}\subseteq U_0$.
    Any such $X$ must contain some $U_i$.
    Furthermore, as $X\subseteq U_0$, we have $0_j \notin X$ for all $j$.
    That said, because $a\not\in A$ there is a $c\geq i$ such that $W_{f(a,c)}$ is infinite, and therefore $0_c$ is a limit point of the other points in $\mathcal{V}_{f(a,c)}$ by Claim \ref{claim:basicUnit}.
    This means that $0_c\in \overline{\mathcal{V}_{f(a,c)}-0_c} \subseteq \overline{X}$, yet $0_c\not\in X$, as desired.
    \end{proof}

    We now complete the construction of the desired space.
    Given a $\Pi_5^0$ set $B$ write
    \[n\in B \iff \forall m ~ n\in A_m,\]
    where $A_m$ is a $\Sigma_4$ condition.
    Let $\mathcal{Y}_n$ be the disjoint union of the spaces $\mathcal{U}_{n,A_m}$ over all values of $m$.
    We use $x^*_m$ to denote the element referred to in Claim \ref{claim:biggerUnit} for the subspace $\mathcal{U}_{n,A_m}$.
    We use $0_{c,m}$ to denote the $c^{th}$ potential limit point in $\mathcal{U}_{n,A_m}$.
    In general, a subscript $m$ is added to the above-established notation to distinguish the different copies of $\mathcal{U}_{n,A_m}$.
    The reduction indicated in the theorem is given by the following claims.

    \begin{claim}\label{claim:T2}
        $\mathcal{Y}_n$ is $T_2$.
    \end{claim}

    \begin{proof}
        Consider two elements $x,y\in\mathcal{Y}_n$.
        The only (potentially) non-isolated points are the $x^*_m$ and the $0_{c,m}$.
        Any two isolated points are separated by the open sets only containing themselves.

        $x^*_m$ and $x^*_{m'}$ for $m\neq m'$ are separated by the opens $\mathcal{U}_{n,A_m}$ and $\mathcal{U}_{n,A_{m'}}$.
        This also separates $x^*_m$ from any other element in $\mathcal{U}_{n,A_{m'}}$.
        $x^*_m$ is separated from $0_{c,m}$ via the open sets $U_{c+1,m}$ containing $x^*_m$ and $\mathcal{V}_{f(a,c,m)}$ containing $0_{c,m}$.
        This also separates $x^*_m$ from any other element in $\mathcal{V}_{f(a,c,m)}$.

        $0_{c,m}$ is similarly separated from other points not in the same copy of $\mathcal{V}_{f(a,c,m)}$.
        Lastly, $0_{c,m}$ is separated by other (isolated) points in $\mathcal{V}_{f(a,c,m)}$ by considering $V_{2s,c,m}$ where $s$ is larger than the index of the isolated point being considered.

        This covers all of the possible cases for pairs of points in $\mathcal{Y}_n$, showing that it is always Hausdorff. 
    \end{proof}

    \begin{claim}\label{claim:T3Reduction}
        $\mathcal{Y}_n$ is $T_3$ if and only if $n\in B$.
    \end{claim}

    \begin{proof}
        If $n\not\in B$ then for some $m$, $n\not\in A_m$.
        In particular, $\mathcal{U}_{n,A_m}$ fails to be regular at $x^*_m$ by Claim \ref{claim:biggerUnit}.
        This means that $\mathcal{U}_{n,A_m}$, and indeed $\mathcal{Y}_n$, cannot be $T_3$.

        If $n\in B$, then for each $m$, $n\in A_m$.
        By Claim \ref{claim:biggerUnit}, $\mathcal{U}_{n,A_m}$ is regular at every $x^*_m$.
        Because checking regularity is a local condition, $\mathcal{Y}_n$ is regular at every $x^*_m$.
        By Claim \ref{claim:T2}, every element in $\mathcal{Y}_n$ is closed as a subset of $\mathcal{Y}_n$.
        Therefore, $\mathcal{Y}_n$ is regular at every isolated point.
        (Every isolated point is both open and closed, meaning that the set containing only the element is the desired witness to regularity.)
        This leaves only $0_{c,m}$ left to check.
        As every open set containing $0_{c,m}$ must contain some $V_{2s,c,m}$, it is enough to show that $V_{2s,c,m}$ is clopen.
        This is true because every non-isolated point outside of $V_{2s,c,m}$ is separated from $V_{2s,c,m}$ by $\mathcal{V}_{c,m}$.
        This means that $V_{2s,c,m}$ can always be used as a witness to regularity at $0_{c,m}$, as desired.
    \end{proof}

\end{proof}

The Uryshon metrization theorem states that all second countable $T_3$ spaces are metrizable.
This yields the following immediate corollary
\begin{corollary}
     Let $T_4$-$CSC$ be the set of normal $CSC$ spaces.
     Let $Met$-$CSC$ be the set of metrizable $CSC$ spaces.
    $T_3$-$CSC=T_4$-$CSC=Met$-$CSC$.
    In particular, both $T_4$-$CSC$ and $Met$-$CSC$ are $\Pi_5^0$ complete within $T_2$-$CSC$.
\end{corollary}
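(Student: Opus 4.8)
The plan is to obtain this corollary as an immediate consequence of Theorem~\ref{thm:T3Completeness} combined with three classical facts of point-set topology: the Urysohn metrization theorem (a second countable space is metrizable if and only if it is $T_3$), the fact that every metrizable space is normal, and the fact that every normal $T_1$ space is regular. Since every CSC space is second countable by definition, the first fact yields $Met$-$CSC = T_3$-$CSC$. For $T_4$-$CSC$: a $T_4$ space is in particular $T_1$, hence regular by the third fact, so $T_4$-$CSC \subseteq T_3$-$CSC$; conversely a $T_3$ space is metrizable, hence normal and $T_1$, so $T_3$-$CSC \subseteq T_4$-$CSC$. Thus the three index sets are literally the same subset of $CSC$, and in particular the same subset of $T_2$-$CSC$ (recall that in our conventions $T_3$ spaces, and therefore all three of these classes, are already $T_2$).

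With this set equality in hand, the complexity statements require no new work. The upper bound follows because the $\Pi_5^0$ sentence characterizing regularity exhibited just before Theorem~\ref{thm:T3Completeness} already shows that $T_3$-$CSC$ is $\Pi_5^0$ within $T_2$-$CSC$, and $T_4$-$CSC$ and $Met$-$CSC$ coincide with $T_3$-$CSC$, so the same witnessing $\Pi_5^0$ set works for them. For $\Pi_5^0$-hardness within $T_2$-$CSC$, one simply reuses the computable reduction underlying the proof of Theorem~\ref{thm:T3Completeness}, namely $n \mapsto \mathcal{Y}_n$: by Claim~\ref{claim:T2} it always lands in $T_2$-$CSC$, and it satisfies $n \in B \iff \mathcal{Y}_n \in T_3$-$CSC = T_4$-$CSC = Met$-$CSC$ for the chosen $\Pi_5^0$ set $B$, which is exactly what hardness for each of the two new index sets demands.

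There is no genuine obstacle here; the only point to verify is the routine observation that the property of being ``$\Gamma$-complete within $C$'' depends only on the underlying set, so it transfers along the equalities above. It is perhaps worth emphasizing that the metrization theorem and the implications between separation axioms are invoked purely classically, applied to the honest topological spaces coded by our indices — we never need an effective metrization procedure, because the conclusion is only an equality of index sets, not a uniform transformation of witnesses.
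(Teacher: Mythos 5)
Your proposal is correct and follows exactly the paper's route: the paper derives this corollary in one line from the Urysohn metrization theorem, identifying the three index sets and then transferring the $\Pi_5^0$-completeness from Theorem~\ref{thm:T3Completeness}. You have merely spelled out the standard implications (metrizable $\Rightarrow$ normal, normal $+\ T_1 \Rightarrow$ regular) that the paper leaves implicit, which is fine.
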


Of course, the same result applies for $T_{3\frac12}$, $T_5$, $T_6$, and any other notion that is generally intermediate between $T_3$ and metrizability.

\subsection{The notions $T_{2\frac12}$ and $T_{\frac12}$}

    We begin by studying an important intermediate notion between $T_2$ and $T_3$.

    \begin{definition}
        A topological space is $T_{2\frac12}$ or Urysohn if any two points are separated by closed neighborhoods.
        In other terms,
        \[\forall x,y \exists i,j ~ x\in U_i\land y\in U_j \land \overline{U_i}\cap\overline{U_j}=\emptyset.\]
    \end{definition}

    Given this notion, it is worth pointing out that the construction in Theorem \ref{thm:T3Completeness} also produces Urysohn spaces regardless of the input.

    \begin{corollary}
    Let $T_3$-$CSC$ be the set of Regular $CSC$ spaces and $T_{2\frac12}$-$CSC$ be the set of Urysohn $CSC$ spaces.
    $T_3$-$CSC$ is $\Pi_5^0$ complete within within $T_{2\frac12}$-$CSC$.
    \end{corollary}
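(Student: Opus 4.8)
The plan is to reuse the construction from Theorem~\ref{thm:T3Completeness} essentially unchanged, observing that it already produces Urysohn spaces, and to obtain the upper bound from the work preceding that theorem.

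For the upper bound, the proposition showing the $T_3$ spaces are $\Pi^0_5$ supplies a $\Pi^0_5$ set that agrees with $T_3$-$CSC$ on all of $CSC$. Since by the metrization corollary $T_3$-$CSC$ is exactly the set of metrizable CSC spaces, and every metrizable space is Urysohn, we have $T_3$-$CSC \subseteq T_{2\frac12}$-$CSC$; hence $T_3$-$CSC = T_3$-$CSC \cap T_{2\frac12}$-$CSC$ exhibits $T_3$-$CSC$ as $\Pi^0_5$ within $T_{2\frac12}$-$CSC$.

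For hardness I would take the very same computable map $n \mapsto \mathcal{Y}_n$ from the proof of Theorem~\ref{thm:T3Completeness}, with $B$ a fixed $\Pi^0_5$-complete set. Claim~\ref{claim:T3Reduction} already gives that $n \in B$ if and only if $\mathcal{Y}_n$ is $T_3$, so the only remaining point --- and the one genuinely new ingredient --- is that $\mathcal{Y}_n$ is Urysohn for \emph{every} $n$, which is what makes $n \mapsto \mathcal{Y}_n$ a legitimate reduction within $T_{2\frac12}$-$CSC$. This is a routine strengthening of the Hausdorff verification of Claim~\ref{claim:T2}: the useful observations are that $\mathcal{Y}_n$ is $T_2$ (so singletons are closed, and hence singletons of isolated points are clopen), that each $\mathcal{U}_{n,A_m}$ is clopen in $\mathcal{Y}_n$ and each copy of $\mathcal{V}_{f(n,c)}$ is clopen inside it, and that the basic sets $V_{2s}$ are clopen (their complements inside a copy of $\mathcal{V}_{f(n,c)}$ being finite sets of isolated points). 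Running through the same list of cases as in Claim~\ref{claim:T2}, one checks that each pair of distinct points is in fact separated by \emph{disjoint clopen} neighborhoods; the two cases not already handled outright by the clopen chunk decomposition are $x^*_m$ against a point of $\mathcal{V}_{c,m}$ (separate by the clopen set $\mathcal{U}_{n,A_m} \setminus \mathcal{V}_{c,m}$ and the clopen set $\mathcal{V}_{c,m}$) and $0_{c,m}$ against an isolated point of its own copy (separate by a sufficiently late clopen tail $V_{2s} \ni 0_{c,m}$ omitting that point, together with the isolated point's clopen singleton). Disjoint clopen sets are in particular disjoint closed neighborhoods, so $\mathcal{Y}_n$ is Urysohn. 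Combining this with the upper bound gives that $T_3$-$CSC$ is $\Pi^0_5$-complete within $T_{2\frac12}$-$CSC$.

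I do not expect a real obstacle here; the one thing to be careful about is that the basic neighborhoods $U_i$ of the points $x^*_m$ are \emph{not} clopen exactly when $n \notin A_m$ (that failure being the content of Claim~\ref{claim:biggerUnit}), so the clopen witnesses used in the Urysohn check must be supplied by the chunk decomposition and the $V_{2s}$-type sets rather than by the $U_i$ themselves.
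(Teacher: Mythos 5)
Your proposal is correct and follows essentially the same route as the paper: reuse the construction and the map $n \mapsto \mathcal{Y}_n$ from Theorem~\ref{thm:T3Completeness} verbatim, and upgrade the Hausdorff check of Claim~\ref{claim:T2} to a Urysohn check by noting that the chunks $\mathcal{U}_{n,A_m}$, the copies $\mathcal{V}_{f(n,c)}$, the tails $V_{2s}$, and singletons of isolated points are clopen. The only cosmetic difference is that the paper keeps the original separating opens (e.g.\ $U_{c+1,m}$) and verifies directly that their closures are disjoint, whereas you substitute explicitly clopen witnesses in those cases; both are valid, and your caution about the $U_i$ failing to be clopen when $n \notin A_m$ is well placed.
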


    \begin{proof}
        The construction is precisely the same as the one in Theorem \ref{thm:T3Completeness}.
        The only change is in Claim \ref{claim:T2} where we must also check that the closure of the separating open sets also do not intersect.
        Going case by case, we must check that
        \begin{enumerate}
            \item $\overline{\mathcal{U}_{A_m,n}}\cap\overline{\mathcal{U}_{A_{m'},n}}=\emptyset$ for $m\neq m'$
            \item $\overline{U_{c+1,m}}\cap \overline{\mathcal{V}_{f(a,c,m)}}=\emptyset$
            \item $x\not\in \overline{V_{2s,c,m}}$ if $x\not\in V_{2s,c,m}$
        \end{enumerate}

        The first item follows from the fact that for any $m$, by construction, $\overline{\mathcal{U}_{A_m,n}}=\mathcal{U}_{A_m,n}$.
        The second item follows from the fact that, by construction,  $\overline{\mathcal{V}_{f(a,c,m)}}=\mathcal{V}_{f(a,c,m)}$ and that every point in $\mathcal{V}_{f(a,c,m)}$ is then separated from $U_{c+1,m}$ by $\mathcal{V}_{f(a,c,m)}$ itself.
        The last item follows from the analysis in Claim \ref{claim:T3Reduction} showing that $\overline{V_{2s,c,m}}=V_{2s,c,m}$.
    \end{proof}

    We now give an optimal description of the $T_{2\frac12}$ spaces as we have done for the other separation axioms.

    \begin{proposition}
        The $T_{2\frac12}$ spaces are $\Pi_5^0$.
    \end{proposition}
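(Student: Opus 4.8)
The plan is to mimic the previous propositions and translate the displayed definition of $T_{2\frac12}$ into a single arithmetic sentence over the basic open sets, then count quantifier alternations. The only step requiring any thought is writing the condition $\overline{U_i}\cap\overline{U_j}=\emptyset$ arithmetically. For this I would use that, since $\mathcal{U}$ is a basis, a point fails to lie in a closure exactly when a basic open set witnesses it:
\[ z\notin\overline{U_i} \iff \exists \ell\,\bigl(z\in U_\ell \,\land\, \forall w\,(w\in U_\ell \to w\notin U_i)\bigr), \]
which is a $\Sigma^0_2$ condition (and dually $z\in\overline{U_i}$ is $\Pi^0_2$).

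Given this, the count is routine. The identity $\overline{U_i}\cap\overline{U_j}=\emptyset$ is equivalent to $\forall z\,(z\notin\overline{U_i}\lor z\notin\overline{U_j})$; the matrix is a disjunction of two $\Sigma^0_2$ formulas and hence $\Sigma^0_2$, so the whole statement is $\Pi^0_3$. Then $x\in U_i\land y\in U_j\land\overline{U_i}\cap\overline{U_j}=\emptyset$ is a conjunction of two atomic predicates with a $\Pi^0_3$ formula, so it is still $\Pi^0_3$. Adding the existential block $\exists i,j$ gives a $\Sigma^0_4$ formula, and adding the outer universal block $\forall x,y$ gives a $\Pi^0_5$ formula; the resulting quantifier prefix is $\forall x,y\,\exists i,j\,\forall z\,\exists \ell\,\forall w$, which is $\Pi^0_5$. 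As this is precisely the sentence defining the $T_{2\frac12}$ spaces, they form a $\Pi^0_5$ class.

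I do not expect a genuine obstacle here: the substance is entirely the quantifier count, and the one place that needs care is recording that membership and non-membership in a closure lie at the $\Pi^0_2$ and $\Sigma^0_2$ levels respectively, which holds because a closure is determined by the countably many basic open sets that meet the given set. As in the $T_3$ proposition, membership $z\in U_i$ is treated as an atomic predicate of the space, so the bound obtained is the complexity of the defining sentence; together with the matching hardness result this pins the exact complexity at $\Pi^0_5$.
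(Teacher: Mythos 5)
Your proof is correct and follows essentially the same route as the paper's: both reduce the matter to showing that $\overline{U_i}\cap\overline{U_j}=\emptyset$ is $\Pi^0_3$ by expressing it as ``every point is separated from $U_i$ or from $U_j$ by a basic open set,'' and then count the quantifier blocks in $\forall x,y\,\exists i,j\,(\cdots)$ to land at $\Pi^0_5$. Your version is, if anything, slightly more carefully bookkept (isolating $z\notin\overline{U_i}$ as a $\Sigma^0_2$ predicate before taking the disjunction and the outer $\forall z$), but there is no substantive difference.
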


    \begin{proof}
        Consider the given definition
        \[\forall x,y \exists i,j ~ x\in U_i\land y\in U_j \land \overline{U_i}\cap\overline{U_j}=\emptyset.\]
        To prove the claim, we must show that $\overline{U_i}\cap\overline{U_j}=\emptyset$ is a $\Pi_3^0$ condition.
        This is the same as saying that any element is either separated from $U_i$ or $U_j$ by an open set.
        In other words,
        \[\forall z \exists k \forall w~ z\in U_k\land \Big((w\not\in U_k\lor w\not\in U_i) \lor (w\not\in U_k\lor w\not\in U_j)\Big).\]
        The above condition is $\Pi_3^0$, which completes the proof of the claim.
    \end{proof}

    \begin{theorem}
        $T_{2\frac12}$-$CSC$ is $\Pi_5^0$ complete within within $T_2$-$CSC$.
    \end{theorem}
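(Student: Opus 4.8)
Since the preceding proposition gives the $\Pi_5^0$ upper bound, it remains to prove $\Pi_5^0$-hardness within $T_2$-$CSC$. The plan is to adapt the construction of Theorem \ref{thm:T3Completeness}, replacing its ``one-sided'' basic unit by a ``two-sided'' one. Recall the building block $\mathcal{V}_e$ from that proof, whose point $0$ is isolated when $W_e$ is finite and is a limit point of the remaining (always isolated) points when $W_e$ is infinite (Claim \ref{claim:basicUnit}). Let $\mathcal{V}^\pm_e$ be the wedge sum obtained by taking two disjoint copies of $\mathcal{V}_e$ and identifying their two points labelled $0$ into a single point $0_e$; write $L_e$ and $R_e$ for the two disjoint sets of isolated points thus glued together. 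Then, just as in Claim \ref{claim:basicUnit}, $0_e$ is a limit point of $L_e$ --- equivalently, of $R_e$, equivalently of both --- if and only if $W_e$ is infinite, and is isolated otherwise. This construction is uniformly computable and carries a computable $k$ function, since it is built by closing an explicitly given, uniformly computable subbasis under finite intersection in the style of steps (2)--(4) of the standard setup.

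Next, with $A$ a $\Sigma_4^0$ set and $f$ the computable function from Theorem \ref{thm:T3Completeness} satisfying $a \in A \iff \exists b\,\forall c \geq b\ W_{f(a,c)}\text{ is finite}$, I would form the two-sided unit $\mathcal{U}^\pm_{a,A}$: the disjoint union of the spaces $\mathcal{V}^\pm_{f(a,c)}$ over all $c \in \omega$, together with two new points $x^*$ and $y^*$ whose basic neighborhoods are
\[ U^x_i = \{x^*\} \cup \bigcup_{j \geq i} L_{f(a,j)} \qquad\text{and}\qquad U^y_i = \{y^*\} \cup \bigcup_{j \geq i} R_{f(a,j)}. \]
Thus $x^*$ is ``at the limit'' of the left sequences and $y^*$ ``at the limit'' of the right sequences, while the shared potential limit points $0_{f(a,c)}$ belong to no neighborhood of $x^*$ or of $y^*$. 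The key claims, paralleling Claims \ref{claim:basicUnit} and \ref{claim:biggerUnit}, are: (i) $\mathcal{U}^\pm_{a,A}$ is always Hausdorff --- every point other than $x^*$, $y^*$, and the $0_{f(a,c)}$ is isolated, each copy $\mathcal{V}^\pm_{f(a,c)}$ is clopen, and $x^*, y^*$ are separated by the disjoint open sets $U^x_0, U^y_0$; and (ii) $x^*$ and $y^*$ are separated by disjoint closed neighborhoods if and only if $a \in A$. For the forward direction of (ii), if $a \in A$ with witness $b'$, then $0_{f(a,c)}$ is isolated for every $c \geq b'$, which makes $U^x_{b'}$ and $U^y_{b'}$ clopen and disjoint. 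For the converse, if $a \notin A$ then any neighborhoods of $x^*$ and of $y^*$ contain some $U^x_i$ and $U^y_{i'}$; choosing $c \geq \max(i,i')$ with $W_{f(a,c)}$ infinite makes $0_{f(a,c)}$ a limit point both of $L_{f(a,c)}$ and of $R_{f(a,c)}$, hence a common point of the two closures, so no disjoint closed neighborhoods exist and $\mathcal{U}^\pm_{a,A}$ is not Urysohn.

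Finally, given a $\Pi_5^0$ set $B$ written as $n \in B \iff \forall m\ n \in A_m$ with each $A_m$ a uniformly $\Sigma_4^0$ condition, let $\mathcal{Y}^\pm_n$ be the disjoint union of the spaces $\mathcal{U}^\pm_{n, A_m}$ over all $m$. As in Claim \ref{claim:T2}, $\mathcal{Y}^\pm_n$ is always Hausdorff, with cross-summand pairs separated by the clopen summands. Since each summand is clopen, a pair of points of $\mathcal{Y}^\pm_n$ admits disjoint closed neighborhoods iff either the points lie in different summands (use the summands themselves) or they lie in a common summand and are so separable inside it; hence $\mathcal{Y}^\pm_n$ is Urysohn iff every $\mathcal{U}^\pm_{n, A_m}$ is Urysohn, which by (ii) holds iff $n \in A_m$ for all $m$, i.e. iff $n \in B$. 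The assignment sending $n$ to a canonical index of $\mathcal{Y}^\pm_n$ is the required reduction. I expect the main obstacle to be purely the verification of claim (i) and the attendant ``clopen summand'' bookkeeping --- in particular confirming that the tails $U^x_{b'}, U^y_{b'}$ are genuinely closed and that arbitrary (not merely basic) neighborhoods of $x^*$ and $y^*$ behave as claimed --- rather than any new idea; the conceptual content is exactly this ``double cone'' modification, which forces the two limits together precisely when the $\Sigma_4^0$ conditions fail.
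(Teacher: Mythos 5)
Your proposal is correct and follows essentially the same route as the paper: your wedge of two copies of $\mathcal{V}_e$ glued at $0$ is precisely the paper's two-sided basic unit split into $\mathcal{V}_e^0$ and $\mathcal{V}_e^1$ (the even and odd points), and the two limit points $x^*,y^*$ with tail neighborhoods, the intermediate $\Sigma^0_4$ unit, and the final disjoint union over $m$ all match the paper's argument. The only piece you defer --- checking that in the positive case every pair \emph{other} than $(x^*,y^*)$ within a summand is also separated by closed neighborhoods --- is exactly the routine clopen-set verification the paper carries out at the end of its final claim, so no new idea is missing.
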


    \begin{proof}
        There are similarities with the proof techniques used in the proof of this theorem and those used for Theorem \ref{thm:T3Completeness}.
        That said, they are different enough that they require their own exposition.

         Our construction is built out of several basic modules of increasing complexity.
         Given $e$ coding a c.e.\ set $W_e$, we construct the following CSC space.
        We follow the convention that $\Phi_e$ enumerates at most one element into $W_e$ at each stage.
        Let $W_{e,s}^*$ be the computable set of stages at least $s$ where $W_e$ gets a new element.
        Let $W=\{2x,2x+1 \vert x\in W_{e,0}^*\}$.
        We construct the topology $\mathcal{V}_e=(V_i)_{i\in\omega}$ on $W\cup\{0\}$ where
    \[2x,2x+1\in V_{3s} \iff x\in W_{e,s}^*\cup\{0\}\]\[V_{3s+1}=\{2x\vert x\in W_{e,s+1}^*-W_{e,s}^*\}\]
    \[V_{3s+2}=\{2x+1\vert x\in W_{e,s+1}^*-W_{e,s}^*\}.\]
        It is useful notation to split $\mathcal{V}_e$ into three parts: $0$, $\mathcal{V}_e^0$ and $\mathcal{V}_e^1$.
        $0$ is just that; the element $0$.
        $\mathcal{V}_e^0$ consists of all of the non-zero, even elements of $\mathcal{V}_e$.
        $\mathcal{V}_e^1$ consists of all of the odd elements of $\mathcal{V}_e$.

    \begin{claim}\label{claim:basicUnit1/2}
        The element represented by $0$ in $\mathcal{V}_e$ is in $\overline{\mathcal{V}_e^0}$ and $\overline{\mathcal{V}_e^1}$ if and only if $W_e$ is finite.
    \end{claim}

    \begin{proof}
    If $W_e$ is finite, then there is some stage $s$ where $W_{e,s}^*$ is empty.
    In particular, $W_{e,s}^*\cup\{0\}=\{0\}$ isolates the point $0$.
    This means that $0$ is not in the closure of any open set that does not contain $0$, so in particular it is not in $\overline{\mathcal{V}_e^0}$ or $\overline{\mathcal{V}_e^1}$.

    If $W_e$ is infinite, then every open set around $0$ contains a subset of the form $V_{3s}$.
    Furthermore, each of the $V_{3s}$ contains infinitely many elements from both $\mathcal{V}_e^0$ and $\mathcal{V}_e^1$.
    If one were finite, observe that $|W_e|=|W_{e,s}\cup \mathcal{V}_e^i|$, and the right hand side would be finite, a contradiction to the assumption that $W_e$ is infinite.
    This means that every open set around $0$ contains elements from both $\mathcal{V}_e^0$ and $\mathcal{V}_e^1$.
    Or, what is the same, $0$ is in $\overline{\mathcal{V}_e^0}$ and $\overline{\mathcal{V}_e^1}$.
    \end{proof}

        It is worth explicitly noting that, by construction, all of the points that are not $0$ are always isolated in $\mathcal{V}_e$.

    Geometrically, $\mathcal{V}_e$ can be thought of as the point $0$ being placed at the center of $[-1,1]$ while the $n^{th}$ point enumerated into $W_e$ adds in the elements $\frac{1}{2^n}$ and $-\frac{1}{2^n}$.
    While $0$ is not a limit point from either direction after finitely many of these actions, it becomes a limit point from both directions after infinitely many.

    We now construct a larger basic unit of the construction.
    This unit will be made up of countably many $\mathcal{V}_e$ topologies and two additional points that we call $x^*$ and $y^*$.
    There will be closed neighborhoods separating $x^*$ and $y^*$, if and only if a $\Sigma_4^0$ condition is met.

    Let $A$ be a $\Sigma_4$ set and $f(a,c)$ be a function such that
    \[a\in A\iff \exists b\forall c\geq b ~ W_{f(a,c)} \text{ is finite.}\]
    Let $\mathcal{U}_{a,A}$ be the topology consisting of the basis for the topologies $\mathcal{V}_{f(a,c)}$ for each $c\in\omega$ and the the following additional open sets
    \[U_{2i}=\{x^*\}\cup \bigcup_{j\geq i}\mathcal{V}^0_{f(a,j)},\]
    \[U_{2i+1}=\{y^*\}\cup \bigcup_{j\geq i}\mathcal{V}^1_{f(a,j)}.\]
    In short, $\mathcal{U}_{a,A}$ is the disjoint union of the $\mathcal{V}_{f(a,c)}$ along with two extra points that are ``at the limit'' of the $\mathcal{V}^0_{f(a,c)}$ and $\mathcal{V}^1_{f(a,c)}$ respectively.

    \begin{claim}\label{claim:biggerUnit1/2}
        There are closed neighborhoods separating $x^*$ and $y^*$ if and only if $a\in A$.
    \end{claim}

    \begin{proof}
    Say that $a\in A$.
    Let $b'$ be the witness showing that $\forall c\geq b'~W_{f(a,c)} \text{ is finite.}$
    Consider $x^*\in U_{2b'}$ and $y^*\in U_{2b'+1}$.
    We claim that $\overline{U_{2b'}}=U_{2b'}$ and $\overline{U_{2b'+1}}=U_{2b'+1}$.
    We show only $\overline{U_{2b'}}=U_{2b'}$ as the other case is symmetric.
    Let $x\not\in U_{2b'}$, we must show that $x\not\in\overline{U_{2b'}}$, or what is the same, find an open set $V_x$ with $x\in V_x$ and $V_x\cap U_{2b'}=\emptyset$.
    Say that $x\in \mathcal{V}_{f(a,c)}$ with $c<b'$.
    The set $\mathcal{V}_{f(a,c)}$ is then itself such a $V_x$.
    On the other hand, say that $x\in \mathcal{V}_{f(a,c)}$ with $c\geq b'$.
    Because $x\not\in U_{2b'}$, this means that $x=0_c$ or $x\in \mathcal{V}_{f(a,c)}^1$ with $c\geq b'$.
    Because $W_{f(a,c)}$ is finite, this gives that, by Claim \ref{claim:basicUnit1/2}, $0_c$ is not in $\overline{U_{2b'}}$ and is isolated.
    If $x\in \mathcal{V}_{f(a,c)}^1$, the open set $U_{2b'+1}$ separates $x$ from $U_{2b'}$ as desired.
    As this covers all cases, this gives that $\overline{U_{2b'}}=U_{2b'}$ and symmetrically $\overline{U_{2b'+1}}=U_{2b'+1}$.
    As $x^*\in U_{2b'}$ yet $x^*\not\in U_{2b'+1}$ while $y^*\in U_{2b'+1}$ yet $y^*\not\in U_{2b'}$, this gives the desired separating closed neighborhoods.

    Say that $a\not\in A$.
    Consider open sets $X$ containing $x^*$ and $Y$ containing $y^*$.
    Any such $X$ must contain some $U_{2i}$ and $Y$ must contain some $U_{2j+1}$.
    We argue that $\overline{U_{2i}}\cap \overline{U_{2j+1}}\neq \emptyset$, which is enough to demonstrate the claim.
    Because $a\not\in A$ there is a $c\geq i,j$ such that $W_{f(a,c)}$ is infinite, and therefore $0_c$ is a limit point of both $\mathcal{V}_{f(a,c)}^0$ and $\mathcal{V}_{f(a,c)}^1$ by Claim \ref{claim:basicUnit1/2}.
    This means that $0_c\in \overline{\mathcal{V}_{f(a,c)}^0}\cap\overline{\mathcal{V}_{f(a,c)}^0}\subseteq\overline{U_{2i}}\cap \overline{U_{2j+1}},$ as desired.
    \end{proof}

    We now complete the construction of the desired space.
    Given a $\Pi_5^0$ set $B$ write
    \[n\in B \iff \forall m ~ n\in A_m,\]
    where $A_m$ is a $\Sigma_4$ condition.
    Let $\mathcal{Y}_n$ be the disjoint union of the spaces $\mathcal{U}_{A_m,n}$ over all values of $m$.
    We use $x^*_m$ and $y^*_m$ to denote the elements referred to in Claim \ref{claim:biggerUnit1/2} for the subspace $\mathcal{U}_{A_m,n}$.
    We use $0_{c,m}$ to denote the $c^{th}$ potential limit point in $\mathcal{U}_{A_m,n}$.
    In general, a subscript $m$ is added to the above-established notation to distinguish the different copies of $\mathcal{U}_{A_m,n}$.
    The reduction indicated in the theorem is given by the following claims.

    \begin{claim}\label{claim:T2for1/2}
        $\mathcal{Y}_n$ is $T_2$.
    \end{claim}

    \begin{proof}
        Consider two elements $x,y\in\mathcal{Y}_n$.
        The only (potentially) non-isolated points are the $x^*_m$, $y^*_m$, and the $0_{c,m}$.
        Any two isolated points are separated by the open sets only containing themselves.

        The opens $\mathcal{U}_{A_m,n}$ and $\mathcal{U}_{A_{m'},n}$ are disjoint when $m\neq m'$, so separate any elements coming from different pieces of the disjoint union.
        $x^*_m$ is separated from $0_{c,m}$ via the open sets $U_{2c+2,m}$ containing $x^*_m$ and $\mathcal{V}_{f(a,c,m)}$ containing $0_{c,m}$.
        $y^*_m$ is separated from $0_{c,m}$ via the open sets $U_{2c+3,m}$ containing $y^*_m$ and $\mathcal{V}_{f(a,c,m)}$ containing $0_{c,m}$.
        Similarly, $x^*_m$ and $y^*_m$ are separated from any other elements in the $\mathcal{V}_{f(a,c,m)}$.
        $x^*_m$ and $y^*_m$ are separated by the disjoint open sets $U_{0,m}$ and $U_{1,m}$.

        $0_{c,m}$ is similarly separated from other points not in the same copy of $\mathcal{V}_{f(a,c,m)}$.
        Lastly, $0_{c,m}$ is separated from other (isolated) points in $\mathcal{V}_{f(a,c,m)}$ by considering $V_{3s,c,m}$ where $s$ is larger than the index of the isolated point being considered.

        This covers all of the possible cases for pairs of points in $\mathcal{Y}_n$, showing that it is always Hausdorff. 
    \end{proof}

    \begin{claim}\label{claim:T21/2Reduction}
        $\mathcal{Y}_n$ is $T_{2\frac12}$ if and only if $n\in B$.
    \end{claim}

    \begin{proof}
        If $n\not\in B$ then for some $m$, $n\not\in A_m$.
        In particular, $x^*_m$ and $y^*_m$ fail to be separated by closed neighborhoods by Claim \ref{claim:biggerUnit1/2}.
        This means that $\mathcal{U}_{A_m,n}$, and indeed $\mathcal{Y}_n$, cannot be $T_{2\frac12}$.

        If $n\in B$, then for each $m$, $n\in A_m$.
        By Claim \ref{claim:biggerUnit1/2}, $x^*_m$ and $y^*_m$ are separated by closed neighborhoods for every $m$.
        By Claim \ref{claim:T2for1/2}, every element in $\mathcal{Y}_n$ is closed as a subset of $\mathcal{Y}_n$.
        This means that every isolated point is clopen.
        If $x$ is isolated, and $X$,$Y$ separate $x$ from $y$, $x$,$\overline{Y}$ are closed neighborhoods that separate $x$ from $y$.
        This is because $x\not\in \overline{Y}$ as $\{x\}$ is an open set disjoint from $Y$.
        Note that $0_{c,m}$ and $0_{c',m}$ for $c\neq c'$ are separated by the clopens $\mathcal{V}_{c,m}$ and $\mathcal{V}_{c',m}$.
        Furthermore, $0_{c,m}$ is separated from $x^*$ and $y^*$ by the clopens $\mathcal{V}_{c,m}$ and $U_{2c+2}$ or $U_{2c+3}$ respectively.
        Lastly, elements in different $A_m$ are separated by the clopens containing the entirety of the $A_m$ they are a part of.
    \end{proof}

    \end{proof}

    There is also an important intermediate notion between $T_0$ and $T_1$ that we study for CSC spaces.

    \begin{definition}
        A topological space is $T_{\frac12}$ or $T_D$ if every point is isolated in its own closure. In other terms
        \[\forall x \exists i ~ x\in U_i\land \forall y ~ (y\in\overline{x}-x)\to y\notin U_i.\]
    \end{definition}

    $T_{\frac12}$ is important because it is the maximal class for which the Cantor-Bendixson analysis makes sense.
    It is worth pointing out that the construction in Theorem \ref{thm:T1Completeness} always produces $T_{\frac12}$ spaces.

    \begin{corollary}
    Let $T_{\frac12}$-$CSC$ be the set of $T_{\frac12}$ CSC spaces.
    $T_1$-$CSC$ is $\Pi_2^0$ complete within $T_{\frac12}$-$CSC$.
    \end{corollary}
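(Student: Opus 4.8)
The plan is to recycle the construction from the proof of Theorem~\ref{thm:T1Completeness} verbatim and verify the single extra fact it needs, namely that the spaces $X_e$ it produces are always $T_{\frac12}$ (the proof there only noted they are $T_0$). The upper bound is immediate once we observe that $T_{\frac12}$ implies $T_0$: if every point is isolated in its own closure then each singleton $\{x\}$ is locally closed, which in particular forces $T_0$. Hence $T_{\frac12}$-$CSC \subseteq T_0$-$CSC$, and the $\Pi^0_2$ formula exhibited in the proof of Theorem~\ref{thm:T1Completeness} that defines $T_1$ within $T_0$-$CSC$ also defines $T_1$-$CSC$ as a subset of $T_{\frac12}$-$CSC$. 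So $T_1$-$CSC$ is $\Pi^0_2$ within $T_{\frac12}$-$CSC$.

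For hardness, recall that the construction sets $V_{2x} = [x,\infty)$ and $V_{2\langle x, s\rangle+1} = \{x\}$ if $\Phi_{e,s}(x)\downarrow$ and $=\omega$ otherwise, and that $X_e$ is always $T_0$ and is $T_1$ if and only if $e \in \mathrm{Tot}$. The only new step is to check $X_e$ is always $T_{\frac12}$. I would do this by describing $\overline{\{x\}}$: if $y \in \overline{\{x\}}$ then every basic open set containing $y$ contains $x$; since $V_{2y} = [y,\infty)$ is such a set this forces $x \geq y$, and if in addition $\Phi_e(y)\downarrow$ then $\{y\}$ is open, so $y \in \overline{\{x\}}$ can only happen when $y = x$. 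Thus $\overline{\{x\}} \subseteq [0,x]$, and then $V_{2x} = [x,\infty)$ is an open neighborhood of $x$ with $V_{2x} \cap \overline{\{x\}} = \{x\}$, so $x$ is isolated in its own closure. As this argument is uniform in $x$ and $e$, every $X_e$ is $T_{\frac12}$.

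Combining these, the computable map $e \mapsto \langle m_e, n_e\rangle$ of Theorem~\ref{thm:T1Completeness} is a many-one reduction of $\mathrm{Tot}$ to $T_1$-$CSC$ all of whose values lie in $T_{\frac12}$-$CSC$; since $\mathrm{Tot}$ is $\Pi^0_2$-complete, this witnesses $\Pi^0_2$-hardness of $T_1$-$CSC$ within $T_{\frac12}$-$CSC$, and together with the upper bound gives completeness.

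There is no genuine obstacle here; the only point requiring a little care is the description of $\overline{\{x\}}$, specifically not overlooking that it may contain points strictly below $x$ (at the arguments on which $\Phi_e$ diverges). Even so, the open set $V_{2x}$ still isolates $x$ in its closure, so no modification of the original construction is necessary — I would flag this as the one spot where a naive argument could go wrong.
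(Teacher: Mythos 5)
Your proposal is correct and follows essentially the same route as the paper: reuse the construction from Theorem~\ref{thm:T1Completeness} and check that every $X_e$ it produces is $T_{\frac12}$ by noting that $\overline{\{x\}}$ consists of $x$ together with non-isolated points below $x$, so that $V_{2x}=[x,\infty)$ isolates $x$ in its own closure. The paper's proof is the same computation, stated slightly more tersely.
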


    \begin{proof}
        We need only show that the space constructed in Theorem \ref{thm:T1Completeness} is always $T_{\frac12}$.
        Observe that any $x\in X_e$ is either isolated or every open set containing $x$ contains $[x,\infty)$.
        This means that the closure of $x$ is all of the non-isolated points less than $x$.
        That said, $x$ is always separated by these points by $[x,\infty)$.
        Therefore, $X_e$ is always $T_{\frac12}$, as desired.
    \end{proof}

    We now move to calibrate the complexity of $T_{\frac12}$-$CSC$ itself. 

    \begin{proposition}
        $T_{\frac12}$-$CSC$ is a $\Pi_4^0$ set .
    \end{proposition}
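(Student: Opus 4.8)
The plan is to translate the displayed definition of $T_{\frac12}$ directly into an arithmetic sentence about the basic open sets and then tally quantifiers. The only term in the definition that is not already expressed in terms of the $U_i$ is the closure $\overline{\{x\}}$, and the key point is that it occurs only as the antecedent of an implication, so it can be negated harmlessly. First I would record the standard observation that, for an index $e=\langle m,n\rangle$, the promise $e\in CSC$ forces $\Phi_m$ total, so that ``$x\in U_i$'' and ``$x\notin U_i$'' are both $\Sigma^0_1$ and hence interchangeable with their negations on $CSC$; equivalently, membership in a basic open may be treated as $\Delta^0_1$ once we are inside $CSC$. (This is the one step that needs a sentence of justification; everything after it is bookkeeping.)

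Next I would rewrite the closure via the basis: $y\in\overline{\{x\}}$ iff $\forall\ell\,(y\in U_\ell\to x\in U_\ell)$, which is $\Pi^0_1$. Substituting, the $T_{\frac12}$ condition becomes
\[
\forall x\;\exists i\;\Big( x\in U_i \;\wedge\; \forall y\,\big(\,\forall\ell\,(y\in U_\ell\to x\in U_\ell)\ \wedge\ y\neq x \;\to\; y\notin U_i\,\big)\Big).
\]
I would then count: the body under $\forall y$ is an implication with a $\Pi^0_1$ hypothesis and a $\Delta^0_1$ conclusion, hence $\Sigma^0_1$; prefixing $\forall y$ gives $\Pi^0_2$; conjoining the $\Delta^0_1$ clause $x\in U_i$ keeps it $\Pi^0_2$; then $\exists i$ gives $\Sigma^0_3$; and finally $\forall x$ gives $\Pi^0_4$.

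To conclude, note that this $\Pi^0_4$ sentence correctly decides $T_{\frac12}$ on all of $CSC$, so $T_{\frac12}$-$CSC$ equals the intersection of the resulting $\Pi^0_4$ set with $CSC$, which is $\Pi^0_2$; the intersection is again $\Pi^0_4$ (indeed one could even absorb the $CSC$ clause into the sentence since $\Pi^0_2\subseteq\Pi^0_4$). There is no serious obstacle here: the whole content is the remark that the closure condition sits in a negative position, so the $\Pi^0_1$ quantifier over $\ell$ does not add a level, leaving a clean $\forall\,\exists\,\forall\,\Sigma^0_1$ shape.
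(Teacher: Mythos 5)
Your proof is correct and follows essentially the same route as the paper: express $y\in\overline{\{x\}}$ as the $\Pi^0_1$ condition $\forall\ell\,(y\in U_\ell\to x\in U_\ell)$, observe that it sits in the antecedent of the implication, and count quantifiers to get the $\forall\,\exists\,\forall\,\Sigma^0_1$ shape, i.e.\ $\Pi^0_4$. Your explicit remarks that membership in $U_i$ is $\Delta^0_1$ on $CSC$ and that intersecting with the $\Pi^0_2$ set $CSC$ preserves $\Pi^0_4$ are slightly more careful than the paper's one-line version, but the argument is the same.
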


    \begin{proof}
        Given the definition,
        \[\forall x \exists i ~ x\in U_i\land \forall y ~ (y\in\overline{x}-x \to y\notin U_i),\]
        it is enough to note that $y\in\overline{x}-x$ is $\Pi_1$.
        This is true because $y\in\overline{x}$ is equivalent to saying that $\forall j ~ y\in U_j \to x\in U_j$ and $y\neq x$.
    \end{proof}

    \begin{theorem}
        $T_{\frac12}$-CSC is $\Pi_4^0$-complete within $T_0$-CSC.
    \end{theorem}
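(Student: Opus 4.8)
The upper bound is the preceding proposition, and since $T_{\frac12}$ implies $T_0$ (two topologically indistinguishable points cannot each be isolated in their common closure) we have $T_{\frac12}$-$CSC \subseteq T_0$-$CSC$; so the task is to prove that $T_{\frac12}$-$CSC$ is $\Pi^0_4$-hard within $T_0$-$CSC$. The plan mirrors the proofs of Theorem~\ref{thm:T3Completeness} and the $T_{2\frac12}$ theorem above: build a basic module governed by a set of intermediate complexity, then take a disjoint union to reach $\Pi^0_4$.

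\emph{Basic module.} Given an index $i$, I will construct a CSC space $\mathcal{U}_i$ on $\{x\}\sqcup\omega$ (with $x$ a distinguished point) whose subbasic open sets are
\[ F_t=\{x\}\cup\{n\in\omega:n\geq t\}\qquad(t\in\omega), \]
together with, for each $n$ and $s$, the set $\{n\}$ if $\Phi_{i,s}(n)\downarrow$ and the whole space otherwise --- the usual stage trick, arranging that $\{n\}$ is open in $\mathcal{U}_i$ precisely when $n\in W_i$. Closing under finite intersection produces the basis and a computable $k$, and everything is uniformly computable in $i$. I would then verify: $\mathcal{U}_i$ is always $T_0$, because $F_{n+1}$ separates $x$ from $n$ and separates $n$ from any larger $m$; the closure of $\{x\}$ is exactly $\{x\}\cup\{n:n\notin W_i\}$, since when $n\in W_i$ the open set $\{n\}$ witnesses $n\notin\overline{\{x\}}$, and when $n\notin W_i$ the only basic open sets containing $n$ are $F_0,\dots,F_n$, all of which contain $x$; every point of $\omega$ is isolated in its own closure, trivially if $n\in W_i$ and because $\overline{\{n\}}=\{m\leq n:m\notin W_i\}$ is finite (and $F_n$ isolates $n$ within it) otherwise; and finally, since every open neighborhood of $x$ contains some $F_t$, the point $x$ is isolated in $\overline{\{x\}}$ if and only if $F_t\cap(\overline{\{x\}}\setminus\{x\})=\emptyset$ for some $t$, that is, if and only if $\{n\geq t:n\notin W_i\}=\emptyset$ for some $t$, that is, if and only if $W_i$ is cofinite. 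Combining these, $\mathcal{U}_i$ is $T_{\frac12}$ exactly when $W_i$ is cofinite, and $T_0$ regardless.

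\emph{Assembly.} The set $\mathrm{Cof}=\{i:W_i\text{ is cofinite}\}$ is $\Sigma^0_3$-complete, being the complement of the $\Pi^0_3$-complete set $\mathrm{CoInf}$, so any $\Pi^0_4$ set $B$ may be written $n\in B\iff\forall m\,(W_{h(n,m)}\text{ is cofinite})$ for some computable $h$. Let $\mathcal{Y}_n$ be the disjoint union of the spaces $\mathcal{U}_{h(n,m)}$ over all $m$; this is a CSC space, uniformly computable in $n$, and the induced map on indices is computable by the $s$-$m$-$n$ theorem. A disjoint union of $T_0$ CSC spaces is again a $T_0$ CSC space, so $\mathcal{Y}_n$ always lies in $T_0$-$CSC$. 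Each summand is clopen in $\mathcal{Y}_n$, so the closure of any singleton lies in its own summand and ``isolated in one's closure'' holds in $\mathcal{Y}_n$ if and only if it holds in the summand; hence $\mathcal{Y}_n$ is $T_{\frac12}$ iff every $\mathcal{U}_{h(n,m)}$ is, iff $\forall m\,(W_{h(n,m)}\text{ cofinite})$, iff $n\in B$. The map $n\mapsto(\text{index of }\mathcal{Y}_n)$ is the required reduction within $T_0$-$CSC$.

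\emph{Expected difficulty.} The main obstacle is the design of the basic module: it must remain $T_0$ while preventing any neighborhood of $x$ from accidentally isolating $x$ in its closure unless $W_i$ is truly cofinite. Enlarging the neighborhood filter at $x$, or giving the points of $\omega$ neighborhoods containing $x$ that are ``thin in the $\omega$-direction,'' tends either to make $x$ trivially isolated in its closure or to break $T_0$; restricting the neighborhoods of $x$ to the single nested family $(F_t)_{t\in\omega}$ is precisely what makes the ``$W_i$ cofinite'' equivalence hold. The remaining ingredients --- the $\mathrm{Cof}$ normal form, the disjoint-union bookkeeping, and the $s$-$m$-$n$ reduction --- are routine and parallel the earlier proofs.
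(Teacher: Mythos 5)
Your proposal is correct and follows essentially the same route as the paper: the same basic module (a point adjoined above $\omega$ whose neighborhoods are the tails $F_t$, with finite points isolated exactly when they enter a c.e.\ set, so that the top point is isolated in its closure iff the set is cofinite), the same use of the $\Sigma^0_3$-complete cofiniteness set, and the same disjoint-union assembly exploiting that $T_{\frac12}$ is a local property. Your verification of the $T_0$ condition and of $T_{\frac12}$ at the finite points is if anything slightly more explicit than the paper's.
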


    \begin{proof}
        We first show that $T_{\frac12}$-CSC is $\Sigma_3^0$-hard within $T_0$-CSC, and use this construction as a basic module in the final construction.
        Recall that $Cof=\{e\vert \{x\vert \Phi_e(x)=1\} \text{ is cofinite}\}$, is a $\Sigma_3^0$ complete set.
        We define the following topology $\mathcal{V}_e$ on $\omega+1$.
        \begin{align*}
        V_{2x} &= [x, \omega] = \{z \in \omega+1 : z \geq x\}\\
        V_{2\left\langle x, s \right\rangle + 1} &= \begin{cases}
            \{x\} & \text{if } \Phi_{e, s}(x)=1\\
            \omega+1 & \text{otherwise}
        \end{cases}
        \end{align*}
        
        Say that $e\in Cof$.
        In this case, there is some $n$ such that all $m\geq n$ have some $s_m$ where $\Phi_{e, s_m}(m)=1$.
        In particular, every element in $[m,\omega]$ is isolated by $V_{2\left\langle m, s_m \right\rangle + 1}$.
        This means that the open set $[m, \omega]$ contains only $\omega$ and isolated points.
        Therefore, $\mathcal{V}_e$ is $T_{\frac12}$ at $\omega$.
        Furthermore, $\mathcal{V}_e$ is always $T_{\frac12}$ at all $x\in\omega$.
        In particular, $x\in[x, \omega] = V_{2x}$.
        For all $y\neq x$ with $y\in[x, \omega]$, $x\notin [y,\omega]$ yet $y\in [y,\omega] = V_{2y}$ meaning that $y\notin \overline{x}$, as needed.

        Say that $e\notin Cof$.
        In this case, consider $\omega$.
        Every open set containing $\omega$ is of the form $[x, \omega]$.
        Say that $[x, \omega]$ witnesses that $\mathcal{V}_e$ is $T_{\frac12}$ at $\omega$.
        Because $e\notin Cof$, there is some $y\in [x, \omega)$ such that $\Phi_e(y) \neq 1$, so $y$ is not isolated.
        In particular, $y$ is only in open sets of the form $[z, \omega]$, and therefore, $y\in\overline{\omega}$.
        This means that $\omega+1$ is not isolated in $[x, \omega]$.
        Thus, $\mathcal{V}_e$ is not $T_{\frac12}$.

        We now upgrade the construction to show $\Pi_4^0$-completeness.
        Let $A$ be a $\Pi^0_4$ set, and $f$ be a computable function such that $a \in A$ if and only if $\forall x (f(a, x) \in Cof)$.
        Consider $\coprod_{x\in\omega} \mathcal{V}_{f(a,x)}$.
        If $a\notin A$, then some $\mathcal{V}_{f(a,x)}$ is not $T_{\frac12}$ and so $\coprod_{x\in\omega} \mathcal{V}_{f(a,x)}$ is not $T_{\frac12}$.
        If $a\in A$, then all of the $\mathcal{V}_{f(a,x)}$ are $T_{\frac12}$.
        Being $T_{\frac12}$ is a local property, so $\coprod_{x\in\omega} \mathcal{V}_{f(a,x)}$ is also $T_{\frac12}$. 
    \end{proof}

\section{Linear Orderings as CSC Spaces}

A countable linear orderings are readily transformed into a CSC space by considering the order topology.
Specifically, the open intervals of the ordering form a base for a countable, second-countable topology.
These topologies have been studied in the CSC context before, notably in \cite{Gao04,S20,G24}.
These spaces are always $T_3$ and (equivalently) metrizable.
In fact, it is even the case that all countable $T_3$ spaces can be represented in this way \cite{L62}.
For the reasons outlined above, considering linear orderings as spaces is not a productive viewpoint to understand non-metrizable spaces.
That said, it is a highly productive viewpoint for understanding metrizable countable spaces.
An advantage of this approach is that methods from computable structure theory can be adopted to better understand the behavior of linear orderings, which can then be transferred to the behavior of their corresponding topologies.
Some care is still needed, however.
Notably, the map that sends an ordering to its order topology is not injective.
Linear orderings with significantly different order-theoretic properties may end up representing the same topological space.
This means that close attention is needed to separate the topologies generated two different linear orderings.
We show in this section that the completely metrizable CSC spaces are $\Sigma_1^1$ complete inside the metrizable CSC spaces (this statement bears similarities to \cite[Corollary 12.10]{G24}).
The same technique gives a new proof that the homeomorphism relation among metrizable CSC spaces is $\Sigma_1^1$-complete (a result that also follows from \cite[Theorem 4.2]{Gao04}).
Considering linear orderings as CSC spaces also yields a variety of other interesting index set results presented at the end of the section.

Our first goal is to provide an order-theoretic characterization of the linear orderings that give rise to completely metrizable interval topologies.
It is known that the countable, completely metrizable spaces are precisely the scattered ones, or the ones whose Cantor-Bendixson derivative is eventually a point (see, e.g., \cite[Theorems 12.1 and 12.13]{G24}).
The key definition then translates the Cantor-Bendixson rank to an order-theoretic setting.

\begin{definition}
Given a countable linear ordering $L$, define $\RK: L\to \omega_1 \cup \{\infty\}$ as
\[\RK(x) = \begin{cases}
\alpha \text{    if  } \alpha+1 \text{ is the least ordinal such that } x  \text{ is not a left or right } \alpha+1-\lim\\
\infty \text{    if } x \text{ is a left or right } \alpha-\lim \text{ for all countable ordinals } \alpha.
\end{cases}
\]

We let 
\[\RK(L)=\sup_{x\in L} \RK(x).\]
\end{definition}

For a detailed recursive definition of $\beta$-limit in a linear ordering, see \cite[Chapter II.4]{MBook}. 
The following are typical, straightforward examples of calculating this rank function.
We use $\eta$ to denote the order type of the rationals and $\zeta$ to denote the order type of the integers.

\begin{example}

\begin{enumerate}
	\item $\RK(\omega^\alpha+1)=\alpha$.
	\item $\RK(\eta)=\infty$.
	\item $\RK(\zeta\cdot\eta) = 0$.
        \item $\RK(\zeta^\alpha)=0$
\end{enumerate}
\end{example}

Note that Examples (3) and (4) demonstrate that the $\RK$ rank of a linear ordering may differ vastly from classical notions of rank for a linear ordering, such as Hausdorff rank or Scott rank \cite{GR24}.

The following lemma isolates a key property of the $\RK$ function that allows it to be practically used in an order-theoretic setting.

\begin{lemma}\label{lem:RkTechnical}
Consider $x\in L$. 
\begin{enumerate}
	\item If $\RK(x)=\alpha\in\omega_1$ then there exists an open interval $I_x$ such that $x\in I_x$ yet for all $y\neq x$ in $I_x$, $\RK(y)<\alpha$.
	\item If $\RK(x)=\infty$ then there for every open interval $I_x$ such that $x\in I_x$ there is a $y\neq x$ in $I_x$ such that $\RK(x)=\infty$.
\end{enumerate}

\end{lemma}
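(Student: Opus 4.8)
The plan is to prove both parts directly from the definition of $\RK$ together with the recursive definition of $\beta$-limit in a linear ordering. Recall that $x$ is a left $\beta$-limit if for every $y < x$ there is some $z$ with $y < z < x$ and $\RK$-rank (at level $\beta$, in the sense of the iterated Cantor--Bendixson-style derivative on the order) at least $\beta$ below $x$; symmetrically on the right. The key observation is that $\RK(x) = \alpha$ means $x$ survives to the $\alpha$-th derivative but is isolated (from at least one side, hence after removing lower-rank points, from both sides) in it.

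For part (1), suppose $\RK(x) = \alpha \in \omega_1$, so $\alpha+1$ is least with $x$ not a left or right $(\alpha+1)$-limit. Since $x$ is neither a left nor a right $(\alpha+1)$-limit, unwinding the definition of $(\alpha+1)$-limit: failing to be a left $(\alpha+1)$-limit gives a point $a < x$ (or $a = -\infty$) such that no $z \in (a, x)$ has rank $\geq \alpha$ in the appropriate sense — i.e. every $z$ with $a < z < x$ satisfies $\RK(z) < \alpha$. Symmetrically, failing to be a right $(\alpha+1)$-limit yields $b > x$ (or $b = +\infty$) such that every $z$ with $x < z < b$ has $\RK(z) < \alpha$. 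Then $I_x := (a, b)$ is an open interval containing $x$, and every $y \in I_x$ with $y \neq x$ has $\RK(y) < \alpha$, as desired. (One must be slightly careful with the endpoint cases where $x$ is the minimum or maximum, or where the ``witness'' is $\pm\infty$; in the order topology these are handled by taking the corresponding ray as $I_x$.)

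For part (2), suppose $\RK(x) = \infty$, meaning $x$ is a left or right $\alpha$-limit for every countable $\alpha$. Fix any open interval $I_x \ni x$; shrinking, we may assume $I_x = (a,b)$ with $a < x < b$ (or a ray). We want some $y \neq x$ in $I_x$ with $\RK(y) = \infty$. Here I would argue by contradiction: if every $y \in I_x \setminus \{x\}$ had $\RK(y) = \gamma_y \in \omega_1$, then since $L$ is countable the set $\{\gamma_y : y \in I_x, y \neq x\}$ is a countable set of countable ordinals, hence bounded by some countable $\beta$; so every $y \in I_x$ with $y \neq x$ has $\RK(y) < \beta$. But then $x$ cannot be a left or right $(\beta+1)$-limit, since the definition of $(\beta+1)$-limit requires the presence of points of rank $\geq \beta$ arbitrarily close on the relevant side, and $I_x$ witnesses their absence on both sides. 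This contradicts $\RK(x) = \infty$. (Note the statement as written says ``$\RK(x) = \infty$'' where it should read ``$\RK(y) = \infty$''; I read it as the latter.)

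The main obstacle will be bookkeeping around the precise recursive definition of $\beta$-limit — in particular making sure that ``$x$ is not a left $(\alpha+1)$-limit'' really does hand us a one-sided open interval in which all points have rank strictly below $\alpha$, rather than merely rank not equal to $\alpha$, and correctly handling limit stages and the $\pm\infty$ endpoint conventions so that the interval $I_x$ is genuinely open in the order topology. Once that dictionary between the $\RK$ function and the derivative-stage definition of limit points is set up cleanly (citing \cite[Chapter II.4]{MBook} for the recursion), both parts are short: part (1) is a direct unwinding, and part (2) is the countability/boundedness argument above.
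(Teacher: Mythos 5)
Your proof is correct and follows essentially the same route as the paper's: part (1) unwinds the failure of $x$ to be a left or right $(\alpha+1)$-limit into a two-sided open interval containing no $\alpha$-limits other than $x$ (hence no points of rank $\geq\alpha$), and part (2) is the same countability/pigeonhole argument showing that a rank-$\infty$ point must be a limit of rank-$\infty$ points, so every interval around it contains another such point. If anything, your write-up is slightly more careful than the paper's about the endpoint conventions and about why ``not an $\alpha$-limit'' really gives $\RK(y)<\alpha$ rather than merely $\RK(y)\neq\alpha$.
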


\begin{proof}
By the definition of the $\RK$ function, if $\RK(x)=\alpha\in\omega_1$ then $x$ is not a right or left limit of points that are themselves $\alpha$-limits.
This means there is an $\ell<x$ and $r>x$ such that there are no $\alpha$ limits inside of the interval $(\ell,r)$ other than $x$.
This interval can be taken to be our claimed $I_x$

Note that a rank infinity point $x\in L$ cannot be the limit of ordinal rank points for every countable ordinal.
This is because there are only countably many points in $L$, but there are uncountably many countable ordinals.
In other words, it must be rank infinity because it is the limit of rank infinity points. 
This is the same thing as saying that every interval containing $x$ has another rank infinity point.
\end{proof}

We can use this lemma to characterize outcomes of the strong Choquet game in terms of the function $\RK$.
The \textit{strong Choquet} game on $X$ is defined as follows: 

\begin{center}
$
  \begin{array}{lccccr}
I\;&U_0\ni x_0&   &U_1\ni x_1&   &\\
 &   &   &   &   &\cdots\\
II\;&   &V_0&   &V_1& 
\end{array}   
 $
 \end{center}

Players $I$ and $II$ take turns in playing nonempty open subsets of $X$.
In the first round, Player $I$ chooses a point $x_0$ and an open set $U_0$ containing $x_0$.
Player $II$ replies with an open set $V_0$ such that $x_0\in V_0\subseteq U_0$.
More generally, on the $2n^{th}$ round, Player $I$'s selects a point $x_{n}$ and an open set $U_{n}$ such that $V_{n-1}\supseteq U_{n}\ni x_{n}$.
On the $(2n+1)^{st}$ round, Player $II$'s selects an open set $V_{n}$ such that $x_n\in V_{n}\subseteq U_n$. 
Player $II$ wins if and only if
\[\bigcap\{V_n: n\in \omega\}=\{U_n: n\in \omega\}\ne\emptyset.\]
If Player $II$ has a winning strategy to this game for a space $X$, then the space $X$ is called \textit{strong Choquet}.
Choquet introduced the game to better understand the property of Baire, but it has found many other useful applications in the intervening years (see \cite{T87} Chapter 7 for an overview).
Notable for our purposes here, among metrizable spaces, the completely metrizable ones are exactly those that are strongly Choquet \cite{C69}.
This provides a path to a purely order-theoretic demonstration that the scattered spaces are the completely metrizable countable spaces. 
Notably, no metric is explicitly described.

\begin{proposition}\label{prop:SChiffRk}
The interval topology of a given order $L$ is strong Choquet if and only if $\RK(L)<\infty$.
\end{proposition}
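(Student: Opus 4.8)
The plan is to prove both implications by exhibiting explicit strategies in the strong Choquet game, with Lemma~\ref{lem:RkTechnical} supplying the essential input. Throughout, recall that the order topology on $L$ has a base of open intervals, and that Player $II$'s winning condition amounts to $\bigcap_n V_n \neq \emptyset$ (which coincides with $\bigcap_n U_n$ since the plays are nested).

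Suppose first that $\RK(L) < \infty$, so every $x \in L$ has an ordinal rank $\RK(x) \in \omega_1$. I would give Player $II$ the following strategy: on round $n$, after Player $I$ designates $x_n \in U_n$, apply Lemma~\ref{lem:RkTechnical}(1) to get an interval $I_{x_n} \ni x_n$ in which every point other than $x_n$ has rank strictly below $\RK(x_n)$, and respond with any basic open interval $V_n$ with $x_n \in V_n \subseteq U_n \cap I_{x_n}$; this is a legal move since $U_n \cap I_{x_n}$ is open and contains $x_n$. Along any resulting run we have $x_{n+1} \in V_n \subseteq I_{x_n}$, so either $x_{n+1} = x_n$ or $\RK(x_{n+1}) < \RK(x_n)$; hence the ordinals $\RK(x_n)$ form a non-increasing sequence, which is eventually constant, forcing the sequence $(x_n)$ to be eventually equal to a single point $x^*$. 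By nestedness $x^* \in V_n$ for all $n$, so $\bigcap_n V_n \neq \emptyset$ and Player $II$ wins; thus the interval topology is strong Choquet.

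Conversely, suppose $\RK(L) = \infty$ and fix $x^*$ with $\RK(x^*) = \infty$; I would show that Player $I$ has a winning strategy, which more than suffices. Fix an enumeration $L = \{p_0, p_1, \dots\}$. Player $I$ maintains the invariant that the last set played contains a point of rank $\infty$: begin with $x_0 = x^*$ and $U_0 = L$. On round $n \geq 1$, given that $V_{n-1}$ contains a rank-$\infty$ point $q$, use Lemma~\ref{lem:RkTechnical}(2) — applied to a subinterval of $V_{n-1}$ around $q$ in the case that $q$ happens to equal $p_{n-1}$ — to obtain a rank-$\infty$ point $x_n \in V_{n-1}$ with $x_n \neq p_{n-1}$, then choose a basic open interval $U_n$ with $x_n \in U_n \subseteq V_{n-1}$ and $p_{n-1} \notin U_n$ (shrink a small interval around $x_n$ into whichever of the two rays determined by $p_{n-1}$ contains $x_n$). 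Since $p_{n-1} \notin U_n \supseteq V_n \supseteq V_{n+1} \supseteq \cdots$, the point $p_{n-1}$ is permanently excluded from $\bigcap_m V_m$; as every index is eventually handled, $\bigcap_n V_n = \emptyset$, so Player $II$ loses and the space is not strong Choquet.

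The verifications that Player $II$'s responses are always legal and that a non-increasing sequence of ordinals stabilizes are routine. I expect the one point needing real care to be the diagonalization in the converse: keeping straight that at every stage a rank-$\infty$ point distinct from the point currently being killed is still available inside the shrinking open sets. This is exactly where Lemma~\ref{lem:RkTechnical}(2) is used, and it is the only place where the hypothesis $\RK(L) = \infty$ enters.
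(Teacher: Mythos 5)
Your proof is correct and follows essentially the same route as the paper: Player $II$ shrinks into the intervals $I_x$ supplied by Lemma~\ref{lem:RkTechnical} so that the ranks of Player $I$'s designated points are non-increasing and must stabilize, and in the converse direction Player $I$ diagonalizes against an enumeration of $L$ while always keeping a rank-$\infty$ point inside the current open set. Your termination argument in the forward direction (stabilization of a non-increasing ordinal sequence, rather than the paper's contradiction via descending to rank $0$) is a minor streamlining, not a different method.
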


\begin{proof}
Let us first assume that  $\RK(L)=\alpha<\infty$ and show that the interval topology is strong Choquet.
In particular, we will describe a winning strategy for Player II in the strong Choquet game.
On each turn, Player I will play $(J,x)$ where $J$ is an open interval containing $x$.
In response, Player II will play $J\cap I_x$ where $I_x$ is defined in Lemma \ref{lem:RkTechnical}.
This is a valid play as $x\in I_x$ by definition, and $J\cap I_x\subseteq J$.
To show that this is a winning strategy, we demonstrate that the intersection of all of the plays must be non-empty by the end of the game.
Note that Player I must change their selection of point infinitely many times.
If they do not, there is some point $y$ that they play on every turn after some turn.
However, in this case, $y$ will be in the final intersection.
Next, note that every time Player I switches their point from $x$ to $y$, $\RK(y)<\RK(x)$.
This follows from the fact that $y\in I_x$ and Lemma \ref{lem:RkTechnical}.

The above analysis guarantees that if Player I changes points infinitely many times at some finite point in the game, Player I plays $x$ such that $\RK(x)=0$.
What is the same, Player I must play a point that is not a limit point, i.e., an isolated point.
In particular, Player II will play $I_x = \{x\}$.
However, this means that Player I can only play $(\{x\},x)$ for the rest of the game, a contradiction to the fact that they change points infinitely many times.
This means that Player I is forced to play the same point after a particular turn and therefore always loses the game.

We now assume that $\RK(L)=\infty$ and show that the interval topology is not strong Choquet.
In particular, we will describe a winning strategy for Player I in the strong Choquet game.
First, note that there must be a point $x\in L$ such that $\RK(x)=\infty$ (in fact, infinitely many by Lemma \ref{lem:RkTechnical}).
This is because there are only countably many points in $L$ and the cofinality of $\omega_1$ is not countable.
Fix an enumeration $\sigma:\omega\to L$.
On turn $s$, Player I will guarantee that $\sigma(s)$ is not in the intersection of the intervals played throughout the game.
We will index the Player I moves with a triple $(\ell_s,r_s,x_s)$ to mean that Player I plays the interval $(\ell_s,r_s)$ along with the point $x_s$.
We will maintain throughout our strategy that $\RK(x_s)=\infty$.
We will index the Player II moves with the pair $(a_s,b_s)$ similarly.

On turn $0$ of the game:
\begin{itemize}
	\item Player I will set $x_0$ to be a rank infinity point that is not equal to $\sigma(0)$.
	\item If $\sigma(0)<x_0$ then Player I will set $\ell_0=\sigma(0)$ and $r_0$ to be an arbitrary point above $x_0$.
	\item If $\sigma(0)>x_0$ then Player I will set $r_0=\sigma(0)$ and $\ell_0$ to be an arbitrary point below $x_0$.
\end{itemize}

On turn $s+1$ of the game: 
\begin{itemize}
	\item Player I will set $x_{s+1}$ to be $x_s$ if  $\sigma(s+1)\neq x_s$ and $x_{s+1}$ to be a distinct rank infinity point inside of $(a_s,b_s)$ if $\sigma(s+1) = x_s$. Note that this is always well defined by Lemma \ref{lem:RkTechnical}.
	\item If $\sigma(s+1)<x_{s+1}$ then Player I will set $\ell_{s+1}=\max(\sigma(s+1),a_s)$ and $r_{s+1}=b_s$.
	\item If $\sigma(s+1)>x_{s+1}$ then Player I will set $r_{s+1}=\min(\sigma(s+1),b_s)$ and $\ell_{s+1}=a_s$. 
\end{itemize}

We claim that this is a winning strategy for the game.
First, we note that this strategy is valid; in other words, it follows the rules of the game.
This is immediate from the fact that $a_s\leq \ell_{s+1} \leq x_{s+1} \leq r_{s+1} \leq b_s$. 
To show that the strategy wins, note that for each $s$, $\sigma(s)\not\in (a_s,b_s)$.
In other words $\bigcap_s (a_s,b_s) = \emptyset$ as required.
\end{proof}

We can use the above result along with classical results about metrization \cite{C69} to achieve our desired characterization.
We note an additional equivalence with representation as a maximal filter space (as seen in \cite{MS09,MS10}), as these spaces have historically been interesting to computability theorists.
One motivation for presenting this proof as we have above, instead of relying on stronger results from the classical theory, is that in \cite{MS10} Theorem 5.3, an explicit maximal filter representation for a space is constructed from a winning strategy to the strong Choquet game.
This means that the above proof can be readily adapted to give a concrete conversion from a ranked CSC representation to a maximal filter representation.
We forgo these details here to avoid sidetracking from the main purpose of the article, but note that this may be a fruitful path for further investigation, particularly in the setting of reverse mathematics.

\begin{theorem}
Given a countable linear ordering $L$, the following are equivalent:
\begin{enumerate}
	\item $\RK(L)<\infty$.
	\item The interval topology on $L$ is strong Choquet.
	\item The interval topology on $L$ is homeomorphic to a maximal filter space.
	\item The interval topology on $L$ is completely metrizable.
\end{enumerate}
\end{theorem}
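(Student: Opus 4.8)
The plan is to route everything through the already-established Proposition~\ref{prop:SChiffRk}, which gives the equivalence of (1) and (2) for free, and then to close the cycle (2) $\Rightarrow$ (3) $\Rightarrow$ (4) $\Rightarrow$ (2) by importing the relevant classical facts. The first thing I would record is the preliminary observation that the interval topology on a countable linear ordering is always second countable — the open intervals with endpoints in $L\cup\{\pm\infty\}$ form a countable base — and always $T_3$, as noted earlier in this section; hence by the Urysohn metrization theorem it is metrizable. This is precisely the fact that lets me bring the metrization machinery to bear on interval topologies.

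For (2) $\Leftrightarrow$ (4), I would cite Choquet's theorem \cite{C69}: a metrizable space is completely metrizable if and only if it is strong Choquet. Combined with the preliminary observation, this immediately yields the equivalence of (2) and (4) for interval topologies on countable linear orders, with no further work.

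For (2) $\Rightarrow$ (3), the idea is to feed Player II's winning strategy — which the proof of Proposition~\ref{prop:SChiffRk} explicitly constructs from the hypothesis $\RK(L)<\infty$ — into the construction of \cite[Theorem 5.3]{MS10}, which converts a winning strategy for Player II in the strong Choquet game on a (countable, second countable) space into a homeomorphic maximal filter representation. Finally, for (3) $\Rightarrow$ (4) I would use that homeomorphisms preserve both regularity and complete metrizability: if the interval topology on $L$ is homeomorphic to a maximal filter space $F$, then $F$ is $T_3$ (since the interval topology is $T_3$), hence $F$ is completely metrizable by the cited property of maximal filter spaces \cite{MS09,MS10}, and therefore so is the interval topology on $L$. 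This closes the loop and establishes the full equivalence.

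The main obstacle is not conceptual depth but careful bookkeeping: one must verify that each imported theorem — Choquet's metrization characterization, the maximal-filter construction of \cite{MS10}, and the regularity/complete-metrizability facts for maximal filter spaces — is stated in a form that applies verbatim to countable, second countable interval topologies, and in particular that the strong Choquet winning strategy produced in Proposition~\ref{prop:SChiffRk} is exactly the kind of input that \cite[Theorem 5.3]{MS10} consumes. If a more self-contained route were desired, one could instead prove (4) $\Rightarrow$ (3) directly by extracting a maximal filter representation from a complete metric, but routing through the strong Choquet strategy is cleaner given what has already been built in this section.
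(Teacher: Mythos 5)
Your proposal is correct and leans on the same external inputs as the paper---Proposition~\ref{prop:SChiffRk} for (1)$\Leftrightarrow$(2), the maximal-filter results of \cite{MS09,MS10}, and the $T_3$-ness of interval topologies---but you arrange the cycle of implications differently. The paper proves (2)$\Leftrightarrow$(3) by verifying that the interval topology is $T_1$ (so that \cite[Theorem 5.3]{MS10} applies in both directions) and then (3)$\Leftrightarrow$(4) by verifying regularity (so that the result of \cite{MS09} applies), never invoking Choquet's completeness characterization inside the proof itself. You instead obtain (2)$\Leftrightarrow$(4) in one stroke from Choquet's theorem \cite{C69} together with Urysohn metrization, and then need only one direction each of the maximal-filter results to splice (3) into the cycle via (2)$\Rightarrow$(3) and (3)$\Rightarrow$(4). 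Both routes are sound; yours uses less of \cite{MS09,MS10} at the cost of importing \cite{C69}, which the paper cites only in the surrounding discussion. One small point of bookkeeping: the regularity of the interval topology, which you defer to ``noted earlier in this section,'' is only asserted there without proof---the paper supplies the short argument inside this very proof (to separate $a$ from $[b,c]$ with $a<b<c$, use the rays determined by a point $d$ with $a<d<b$ if one exists, and $L_{<b}$, $L_{>a}$ otherwise), and you should include that verification, along with the equally short $T_1$ check, rather than treat them as already established.
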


\begin{proof}
The above proposition demonstrates that items (1) and (2) are equivalent.
To see that (2) and (3) are equivalent, it is enough to show that the interval topology on $L$ is $T_1$.
This follows from Theorem 5.3 from \cite{MS10}.
The interval topology is always $T_1$; given $a<b$ $L_{<b}$ separates $a$ from $b$ and $L_{>a}$ separates $b$ from $a$.
To see that (3) and (4) are equivalent, it is enough to show that the interval topology on $L$ is regular.
This follows directly from \cite{MS09}.
As the interval topology has already been noted to be $T_1$, we only need to show that it is $T_3$.
Consider $a<b<c$.
We wish to separate $a$ from $[b,c]$ with two non-intersecting open sets.
If there is a $d$ such that $a<d<b$, we can take these sets to be $L_{<d}$ and $L_{>d}$.
If there is no such $d$, then we can take these sets to be $L_{<b}$ and $L_{>a}$.
\end{proof}

An important class of linear orderings, the scattered linear orderings, always have ordinal rank.
See \cite[Chapter 5]{Ros} for a treatment of the scattered linear orderings covering all of the basic definitions and results concerning the Hausdorff rank of such an ordering.
We let $\sim$ be the finite distance equivalence relation on an ordering in the following proof.

\begin{proposition}
If $L$ is a scattered linear ordering with Hausdorff rank $\alpha$, $\RK(L)\leq\alpha$.
\end{proposition}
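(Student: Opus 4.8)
The plan is to prove this by induction on the Hausdorff rank $\alpha$ of the scattered linear ordering $L$, using the standard decomposition of scattered orderings into $\sim$-classes, where $\sim$ is the finite-distance equivalence relation. Recall that if $L$ has Hausdorff rank $\alpha$, then the quotient $L/{\sim}$ is a scattered ordering of Hausdorff rank strictly less than $\alpha$ (or $L/{\sim}$ is trivial when $\alpha \leq 1$ and each $\sim$-class is a suborder of $\zeta$ or finite), and each $\sim$-class of $L$ is itself isomorphic to a finite ordering or to $\zeta$. The key point to exploit is \Cref{lem:RkTechnical}: to bound $\RK(x)$ for a point $x \in L$, it suffices to exhibit an open interval $I_x$ around $x$ in which every other point has strictly smaller $\RK$-value.

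First I would handle the base case: if $\alpha = 0$ then $L$ is finite, every point is isolated, and $\RK(x) = 0$ for all $x$, so $\RK(L) = 0 \leq \alpha$. For the inductive step, suppose the claim holds for all scattered orderings of Hausdorff rank less than $\alpha$, and let $L$ have Hausdorff rank $\alpha$. Fix $x \in L$, and let $C$ be the $\sim$-class of $x$. The class $C$ is convex in $L$, and as a linear ordering in its own right it has Hausdorff rank at most $1$, so — treating $C$ with its own interval topology — every point of $C$ is either isolated in $C$ or, at worst, a one-sided limit of a copy of $\zeta$; in any case one checks directly that $\RK$ computed within $C$ is at most $1$ at every point, in fact $0$ or handled trivially. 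The real work is to pass from "small rank inside the class $C$" together with "the quotient $L/{\sim}$ has rank $< \alpha$" to a bound on $\RK(x)$ computed in all of $L$. The idea is that $L/{\sim}$ has Hausdorff rank $\leq \alpha - 1$ (when $\alpha$ is a successor) or is a limit of lower-rank pieces (when $\alpha$ is a limit), so by the induction hypothesis the image $\bar x$ of $x$ in $L/{\sim}$ has $\RK(\bar x) \leq \alpha - 1$ (resp. $< \alpha$); then \Cref{lem:RkTechnical} applied in $L/{\sim}$ gives an interval $\bar I$ around $\bar x$ in which all other classes have $\RK$-value $< \RK(\bar x)$, and one pulls $\bar I$ back to an interval in $L$ and combines it with the within-class analysis to conclude that $\RK(x) \leq \RK(\bar x) + 1 \leq \alpha$ (or $\leq \alpha$ directly in the limit case).

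The main obstacle I expect is the bookkeeping needed to reconcile the two notions of "limit": being an $\beta$-limit of a point $x$ in $L$ is not simply the same as being a $\beta$-limit of $\bar x$ in $L/{\sim}$, because collapsing a $\zeta$-chain to a point changes which points are "nearby," and one has to be careful that a point which is one-sided within its class but the quotient point is a two-sided limit, etc. Concretely, the delicate claim is: if every other class $\bar y$ in a neighborhood $\bar I$ of $\bar x$ satisfies $\RK(\bar y) < \RK(\bar x) = \gamma$, then every other point $y$ of $L$ lying over $\bar I$ satisfies $\RK(y) \leq \gamma$, and moreover $x$ itself is not a $(\gamma+1)$-limit in $L$ — the latter requires showing that within the pulled-back interval, $x$ is separated from the points of strictly lower rank by the endpoints of its own class $C$ (using that $C$ is convex and has a predecessor/successor structure of Hausdorff rank $\leq 1$). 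Handling the case where $x$'s class $C$ has no endpoint on one side, so that $x$ genuinely is a limit of points of its class, is where \Cref{lem:RkTechnical}(1) applied inside $C$ (giving an interval isolating $x$ from same-class points of lower rank) must be intersected with the pullback of $\bar I$; I would organize the argument so that the final interval $I_x$ witnessing $\RK(x) \leq \alpha$ is precisely this intersection. A subtlety worth flagging explicitly in the writeup is the limit-ordinal case for $\alpha$, where $L/{\sim}$ need not have a single well-defined rank below $\alpha$ but is a sum of pieces of ranks approaching $\alpha$; there one applies the induction hypothesis to each piece and uses that $\RK(\bar x)$ is the sup over the local behavior, which stays below $\alpha$, so $\RK(x) \leq \alpha$ still follows.
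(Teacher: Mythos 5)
Your overall strategy --- transfinite induction on the Hausdorff rank, passing to the finite-distance quotient $L/{\sim}$ in the successor case --- is the same as the paper's, and you have correctly located the crux. But the crux is exactly where your writeup stops: you flag ``reconciling the two notions of limit'' as the main obstacle and then describe what would need to be true ($\RK(x)\le\RK(\bar x)+1$) without establishing it. The paper's proof is essentially nothing but this step: a nested transfinite induction showing that for finite $n$ an $(n+1)$-limit of $L$ maps to an $n$-limit of $L/{\sim}$, while for infinite $\gamma$ a $\gamma$-limit of $L$ maps to a $\gamma$-limit of $L/{\sim}$ (no decrement). The mechanism is that two distinct $\delta$-limits with $\delta>0$ can never be finitely far apart, so they lie in distinct $\sim$-classes; hence a sequence of $\delta$-limits converging to $x$ in $L$ pushes down to a genuine sequence of distinct classes converging to $[x]_\sim$. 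Without nailing down this ``decrement finite levels, preserve infinite levels'' bookkeeping, the bound for infinite $\alpha$ does not follow from a naive $+1$ estimate applied transfinitely. The paper also runs the argument contrapositively on limit ranks (an $x$ that is too high a limit in $L$ forces $[x]_\sim$ to be too high a limit in $L/{\sim}$, contradicting the induction hypothesis); your plan of pulling back the interval $\bar I$ from Lemma~\ref{lem:RkTechnical} in the quotient and intersecting with an interval inside the class $C$ can be made to work, but it is extra machinery layered on the same correspondence you would still have to prove.

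Second, your limit-ordinal case is aimed at the wrong object. When $\alpha$ is a limit ordinal, the quotient $L/{\sim}$ still has Hausdorff rank $\alpha$ (a single quotient does not drop a limit rank), so the induction hypothesis does not apply to it. The paper instead decomposes $L$ itself as an $\omega$-, $\omega^*$-, or $\zeta$-sum of convex pieces $L_i$ of strictly smaller Hausdorff rank and uses that $\RK$ is local: the rank of $x\in L_{i+1}$ is already determined inside $L_i+L_{i+1}+L_{i+2}$, which has rank $<\alpha$ by the induction hypothesis, so $L$ has no $\alpha$-limits. You gesture at ``applying the induction hypothesis to each piece,'' which is the right idea, but it must be applied to the convex pieces of $L$, not of $L/{\sim}$, and you need the locality observation to transfer the piecewise bound to all of $L$.
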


\begin{proof}
This follows from a transfinite induction argument.
In the base case is $\alpha=0$, the ordering must be finite, so certainly $\RK(L)=0$.
In the successor case $\alpha=\beta+1$, $L/\sim$ is Hausdorff rank $\beta$.
If $x\in L$ is a $\beta+1$ limit, then we claim that $[x]_\sim$ is a $\beta$ limit in $L/\sim$.
This will immediately demonstrate the claim.

This will follow from its own transfinite induction argument.
In particular, we show that for all finite $n$ if $x\in L$ is a $n+1$ limit then  $[x]_\sim$ is an $n$ limit in $L/\sim$, and for all infinite $\gamma$ $x\in L$ is a $\gamma$ limit if and only if $[x]_\sim$ is a $\gamma$ limit in $L/\sim$.
In this base case, there is nothing to show.
In the successor case $\gamma=\delta+1$.
If $x$ is a $\gamma$ limit, this means that it is a (without loss of generality right) limit of $\delta$ limits $y_i$.
Consider $[z]_\sim<[x]_\sim$.
Take $y_i\geq z$ and note that $[y_i]_\sim\geq [z]_\sim$.
Note that no two $\delta$ limits can be finitely far from each other if $\delta>0$.
In other words, $[y_{i+1}]_{\sim}> [y_i]_\sim\geq [z]_\sim$.
This means that $[x]_\sim$ is a limit of the $[y_i]_\sim$.
By induction, this means that $[x]_\sim$ is an $n=\delta$ limit in the finite case and a $\gamma$ limit in the infinite case.
For a limit level $\lambda$ let $\delta_i\to\lambda$ be a fundamental sequence such that $\delta_i\to \lambda$ and $y_i\to x$ with $y_i$ a $\delta_i$ limit.
Note that if $f:\omega_1\to\omega_1$ is the function that decrements finite ordinals and fixes infinite ordinals $f(\delta_i)\to \lambda$ is still a fundamental sequence.
Just as in the successor case, we know that $[y_i]_\sim < [y_{i+1}]_\sim$ and therefore for all $[z]_\sim\leq[x]_\sim$ there is $[y_{i+1}]_{\sim}> [y_i]_\sim\geq [z]_\sim$.
In other words, $[x]_\sim$ is a limit of the $[y_i]_\sim$ and $[x]_\sim$ is therefore a limit of $f(\delta_i)$ limits by induction.
Therefore, $[x]_\sim$ is still a $\lambda$ limit as desired.

We now return to the main transfinite induction.
In the $\alpha=\lambda$ limit case, $L$ can be written as an $\omega$ sum, $\omega^*$ sum, or $\zeta$ sum of linear orderings $L_i$ with smaller Hausdorff ranks.
Note that the $\RK$ of a point does not change when passing to an interval.
This means that any given $x\in L_{i+1}$ has that its rank is determined by its rank in $L_i+  L_{i+1}+ L_{i+2}$.
By induction, this linear ordering does not have any $\lambda$ limits, so neither does $L$.
This means that $\RK(L)\leq\alpha$ as desired.
\end{proof}

We extract the following corollary from the above analysis.

\begin{corollary}\label{cor:ordMet}
The interval topology is completely metrizable for all countable ordinals.
\end{corollary}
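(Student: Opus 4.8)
The plan is to chain together the two results immediately preceding the corollary. The Proposition just proved shows that a scattered linear ordering of Hausdorff rank $\alpha$ satisfies $\RK(L) \le \alpha$, and the Theorem above records that $\RK(L) < \infty$ is equivalent to the interval topology on $L$ being completely metrizable. So it suffices to show that a countable ordinal, viewed as a linear ordering, is scattered with a (countable) Hausdorff rank.

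First I would observe that every countable ordinal $\alpha$, regarded as a linear ordering, is scattered: a well-ordering can contain no suborder of type $\eta$, since $\eta$ has no least element whereas every nonempty subset of a well-ordering does. Hence $\alpha$ is scattered and has a countable Hausdorff rank $\beta$ in the sense of \cite[Chapter 5]{Ros}. Applying the Proposition with $L = \alpha$ then yields $\RK(\alpha) \le \beta < \infty$, and by the equivalence of clauses (1) and (4) of the Theorem above, the interval topology on $\alpha$ is completely metrizable. Since $\alpha$ was an arbitrary countable ordinal, this proves the corollary.

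There is no genuine obstacle here: everything is inherited from the preceding two results, and the only point requiring (minimal) care is checking that ordinals fall within the hypotheses of the Proposition, which reduces to the elementary fact that well-orderings are scattered. One could alternatively invoke Example (1), namely $\RK(\omega^\gamma + 1) = \gamma$, together with the locality of $\RK$ under passage to sub-intervals (as used in the proof of the Proposition), since every countable ordinal embeds as an initial segment of $\omega^\gamma + 1$ for sufficiently large $\gamma$; but the scatteredness route is the cleaner of the two.
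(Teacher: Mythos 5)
Your proof is correct and is exactly the argument the paper intends (the paper leaves it implicit, saying only that the corollary is ``extracted from the above analysis''): ordinals are scattered with countable Hausdorff rank, so the Proposition gives $\RK(\alpha)<\infty$, and the Theorem's equivalence of (1) and (4) finishes. No issues.
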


\begin{lemma}\label{lem:NotComplete}
If a linear ordering $L$ has a point  $x\in L$ that is a limit of points in its own automorphism orbit, it has rank $\infty$.
\end{lemma}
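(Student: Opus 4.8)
The plan is to run the hypothesis against the localization property of the rank function recorded in Lemma~\ref{lem:RkTechnical}(1), using the fact that $\RK$ is invariant under order automorphisms of $L$.

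First I would record the basic observation that if $\phi \colon L \to L$ is an order automorphism, then $\RK(\phi(z)) = \RK(z)$ for every $z \in L$. The point is that the predicates ``$z$ is a left $\beta$-limit'' and ``$z$ is a right $\beta$-limit'' are defined by transfinite recursion purely in terms of the order relation, so a routine induction on $\beta$ shows each is preserved by $\phi$ and by $\phi^{-1}$; hence the least ordinal witnessing a failure of the $\beta$-limit property at $z$ (or the absence of any such ordinal) agrees for $z$ and $\phi(z)$, which is exactly $\RK(\phi(z)) = \RK(z)$. In particular every point of the automorphism orbit of $x$ has the same $\RK$-value as $x$.

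Now suppose toward a contradiction that the distinguished point $x$ has $\RK(x) = \alpha$ for some $\alpha \in \omega_1$. Apply Lemma~\ref{lem:RkTechnical}(1) to obtain an open interval $I_x \ni x$ such that every $y \in I_x$ with $y \neq x$ satisfies $\RK(y) < \alpha$. By hypothesis $x$ is a limit of points of its own automorphism orbit, so the open interval $I_x$ around $x$ contains some point $y \neq x$ of that orbit; write $y = \phi(x)$ with $\phi \in \mathrm{Aut}(L)$. Then $\RK(y) = \RK(x) = \alpha$ by the first step, contradicting the defining property of $I_x$. Hence no countable ordinal is the rank of $x$, so $\RK(x) = \infty$, and therefore $\RK(L) = \sup_{z \in L} \RK(z) = \infty$.

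Everything here is immediate once Lemma~\ref{lem:RkTechnical} is available; the only step worth spelling out is the automorphism-invariance of $\RK$, which is the standard ``order-definable predicates are isomorphism-invariant'' fact and the engine of the argument. (One should read the hypothesis ``limit of points in its own automorphism orbit'' as: every open interval around $x$ meets the orbit in a point other than $x$; if one instead only assumes a sequence $y_i \to x$ of orbit points with $y_i \neq x$, the argument applies verbatim, since then $y_i \in I_x$ for all large $i$.)
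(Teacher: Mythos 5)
Your proof is correct and is essentially the paper's argument: both derive a contradiction from assuming $\RK(x)=\alpha<\infty$ by noting that points automorphic to $x$ have the same rank, so $x$ would be a limit of points of rank $\alpha$, which is incompatible with $\RK(x)=\alpha$. The only cosmetic difference is that you route the final contradiction through Lemma~\ref{lem:RkTechnical}(1), while the paper unwinds the definition directly ($x$ would be an $(\alpha+1)$-limit); you are also more explicit about the automorphism-invariance of the limit predicates, which the paper leaves implicit.
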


\begin{proof}
Without loss of generality, we assume that $x$ is a right limit of $y_0<y_1<\cdots$ with $y_i$ in the automorphism orbit of $x$.
For the sake of contradiction, say that $x$ has an ordinal rank $\alpha$.
This means that $x$ is an $\alpha$ limit.
In turn, each $y_i$ is an $\alpha$ limit.
However, this means that $x$ is a limit of $\alpha$ limits and has ordinal rank greater than $\alpha$.
\end{proof}

We apply this lemma to the Harrison ordering $\H$, an ordering of type $\omega_1^{ck}\cdot(1+\eta)$ with a computable copy but no computable descending sequences.

\begin{corollary}\label{cor:notHomeo}
The interval topology of the Harrison linear ordering is not completely metrizable.
In particular, it is not homeomorphic to the interval topology of any countable ordinal.
\end{corollary}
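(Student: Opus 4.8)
The plan is to produce a single point of $\H$ of $\RK$-rank $\infty$ via Lemma~\ref{lem:NotComplete}, and then to combine this with the theorem above identifying complete metrizability of an interval topology with the condition $\RK(L)<\infty$, together with Corollary~\ref{cor:ordMet}. Recall that $\H$ has order type $\omega_1^{ck}\cdot(1+\eta)$, so we may write $\H=\sum_{b\in 1+\eta}B_b$ as a sum of blocks, each of order type $\omega_1^{ck}$; let $0_b$ denote the least element of $B_b$. Fix an index $q$ lying in the dense part of $1+\eta$ (that is, $q$ is not the least element of $1+\eta$) and put $x=0_q$.

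First I would check that the automorphism orbit of $x$ contains $0_{q'}$ for every $q'$ in the dense part of $1+\eta$: the least element of $1+\eta$ is fixed by every automorphism, but the remainder is a copy of $\eta$ and hence homogeneous, so there is an automorphism $\sigma$ of $1+\eta$ fixing the least element with $\sigma(q)=q'$; fixing isomorphisms $B_b\cong\omega_1^{ck}$, this $\sigma$ lifts to an automorphism of $\H$ permuting the blocks according to $\sigma$ and acting by the chosen isomorphisms inside each block, and this lift carries $0_q$ to $0_{q'}$. Next I would show that $x=0_q$ is a left limit of points in its own orbit. Given any $z\in\H$ with $z<x$, the element $z$ lies in a block $B_{q'}$ with $q'<q$ in $1+\eta$ (it cannot lie in $B_q$, since $x$ is least there); as $q$ is in the dense part there is $q''$ with $q'<q''<q$, and then $z<0_{q''}<x$ because $B_{q'}$ lies wholly below $B_{q''}$, which lies wholly below $B_q$. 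Enumerating the countable set $\{z:z<x\}$ and diagonalizing produces an increasing $\omega$-sequence of orbit points cofinal below $x$; hence $x$ is a left limit of points of its own automorphism orbit, so Lemma~\ref{lem:NotComplete} gives $\RK(x)=\infty$ and therefore $\RK(\H)=\infty$.

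By the equivalence $\RK(L)<\infty\Leftrightarrow$ (interval topology completely metrizable) from the theorem above, it follows that the interval topology of $\H$ is not completely metrizable. Since complete metrizability is a homeomorphism invariant and, by Corollary~\ref{cor:ordMet}, the interval topology of every countable ordinal is completely metrizable, the interval topology of $\H$ is homeomorphic to that of no countable ordinal. The one nonroutine ingredient is the analysis of the orbit and limit structure of $\H$: one must ensure the chosen point genuinely has points below it (so $q$ must lie in the dense part rather than in the initial block), and one must verify that automorphisms of the condensation $1+\eta$ lift to $\H$ and act transitively on the dense family of blocks.
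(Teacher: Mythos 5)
Your argument is correct and follows essentially the same route as the paper: both identify the least element of a block $\omega_1^{ck}$ in the dense (non-well-founded) part of $\H$ as a left limit of points in its automorphism orbit, apply Lemma~\ref{lem:NotComplete} to get $\RK(\H)=\infty$, and then conclude via Proposition~\ref{prop:SChiffRk} (with the equivalence of strong Choquet and complete metrizability) and Corollary~\ref{cor:ordMet}. You simply spell out the block decomposition, orbit-transitivity, and density details that the paper leaves implicit.
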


\begin{proof}
This follows from Proposition \ref{prop:SChiffRk} and the observation that the Harrison linear ordering has rank $\infty$.
The latter observation is a result of the fact that the first point in a copy of $\omega_1^{ck}$ in the non-well-founded part of the Harrison linear ordering is a limit of points automorphic to it and Lemma \ref{lem:NotComplete}.
\end{proof}

The following is a useful proposition from \cite[Chapter XI]{MBook}  (see also: \cite{FFHKMM}).



\begin{proposition}\label{prop:reduct}
Given a $\Sigma_1^1$ set $A$ there is a uniform computable procedure $\Phi_A$ with the following properties:
\begin{enumerate}
	\item If $n\in A$ then $\Phi_A(n)$ is the index of a Harrison ordering.
	\item If $n\not\in A$ then $\Phi_A(n)$ is the index of a computable ordinal.
\end{enumerate}
\end{proposition}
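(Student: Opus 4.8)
The plan is to realize the proposition as a composition of two standard moves: the tree normal form for $\Sigma_1^1$ sets, and a ``Harrison-ization'' of computable trees that outputs a computable ordinal on well-founded trees and a computable copy of $\H$ on ill-founded ones.

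First I would appeal to Kleene's normal form theorem: since $A$ is $\Sigma_1^1$, there is a sequence of trees $(T_n)_{n\in\omega}$ on $\omega$, uniformly computable from a $\Sigma_1^1$-index for $A$, with $n\in A$ if and only if $T_n$ has an infinite branch, equivalently $T_n$ is ill-founded. Second, I would fix once and for all a computable copy $H$ of a Harrison tree: a computable tree that is ill-founded, has no hyperarithmetic infinite branch, and whose Kleene--Brouwer ordering $\mathrm{KB}(H)$ has order type $\omega_1^{ck}\cdot(1+\eta)$. From $T_n$ and $H$ I would build, uniformly and computably in $n$, a tree $S_n$ that weaves $T_n$ into $H$ --- for instance letting its nodes be appropriately paired compatible nodes of $T_n$ and $H$ --- so that an infinite branch of $S_n$ projects to infinite branches of both $T_n$ and $H$; since $H$ always has a branch, $S_n$ is ill-founded exactly when $T_n$ is. The reduction is then $\Phi_A(n):=$ an index for $L_n:=\mathrm{KB}(S_n)$, which is computable uniformly in $n$ by the $s$-$m$-$n$ theorem.

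For the verification: if $n\notin A$ then $T_n$, hence $S_n$, is well-founded, so $L_n=\mathrm{KB}(S_n)$ is a computable well-ordering, i.e.\ an index of a computable ordinal. If $n\in A$ then $S_n$ is ill-founded, so $L_n$ is not a well-ordering; moreover $L_n$ has no hyperarithmetic infinite descending sequence, since from such a sequence one obtains (arithmetically in the sequence) an infinite branch of $S_n$, which projects to a hyperarithmetic branch of $H$, a contradiction. By Harrison's theorem a computable linear order that is not a well-ordering and has no hyperarithmetic descending sequence has order type $\omega_1^{ck}\cdot(1+\eta)+\rho$ for a computable ordinal $\rho$, and the weaving construction is arranged so that the branches contributed by $T_n$ neither inflate the well-founded part past $\omega_1^{ck}$ nor leave an ordinal tail, forcing $\rho=0$; hence $L_n$ is a Harrison ordering.

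The main obstacle is exactly this last point --- designing the interleaving of $T_n$ with $H$ so that, when $T_n$ is ill-founded, $\mathrm{KB}(S_n)$ has Harrison order type \emph{on the nose} rather than an inflated well-founded part or an extra ordinal tail. This is the substance of Harrison's pseudo-well-ordering construction, carried out in \cite[Chapter XI]{MBook} (see also \cite{FFHKMM}), and I would import it; the remaining pieces --- Kleene's normal form, the lemma extracting a branch of $T$ from a descending sequence in $\mathrm{KB}(T)$, and the case analysis above --- are routine.
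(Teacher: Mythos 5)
Your outline is the standard argument, and it is essentially the only argument: the paper itself gives no proof of this proposition, citing \cite[Chapter XI]{MBook} and \cite{FFHKMM} for exactly the construction you describe (tree normal form for $\Sigma^1_1$, product with a Harrison tree, Kleene--Brouwer ordering). So there is nothing to compare against except the citation, and your reconstruction is faithful to what is being cited. The one step you flag but leave to the literature --- forcing the ill-founded case to have order type $\omega_1^{ck}\cdot(1+\eta)$ \emph{exactly}, rather than $\omega_1^{ck}\cdot(1+\eta)+\rho$ for some computable $\rho$ --- does matter here, since the paper's applications (the homeomorphism reduction and \cref{cor:notHomeo}) need the output to be isomorphic to $\mathcal{H}$ on the nose, and $\omega_1^{ck}\cdot(1+\eta)+\rho$ with $\rho>0$ has a well-ordered final segment, hence is not a Harrison ordering. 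But you do not actually need a delicate weaving to close this: just post-compose with multiplication by $\omega$, setting $\Phi_A(n)$ to be an index for $\mathrm{KB}(S_n)\cdot\omega$. If $\mathrm{KB}(S_n)$ is a computable ordinal $\alpha$, then $\alpha\cdot\omega$ is still a computable ordinal; if $\mathrm{KB}(S_n)\cong\omega_1^{ck}\cdot(1+\eta)+\rho$, then since $\rho+\omega_1^{ck}=\omega_1^{ck}$ the sum telescopes and $\bigl(\omega_1^{ck}\cdot(1+\eta)+\rho\bigr)\cdot\omega\cong\omega_1^{ck}\cdot\bigl((1+\eta)\cdot\omega\bigr)=\omega_1^{ck}\cdot(1+\eta)$. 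With that modification your sketch is a complete, self-contained proof modulo only Harrison's classification of computable pseudo-well-orderings, which is the appropriate black box.
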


We are interested in index sets not for linear orderings, but for CSC spaces.
However, there is a natural computable way to move into CSC spaces using the interval topology construction.

\begin{definition}
Given an index for a computable linear ordering $L$, let $L^\tau$ be the index for the topology generated by the intervals $(a,b)$ for $a<b$ in $L$.
This topology has a computable $k$ function given by $(a,b)\cap(c,d)=(c,b)$.
\end{definition}

\begin{theorem}
The set of pairs $(e,f)$ of homeomorphic, computable CSC topologies is $\Sigma_1^1$ complete inside of the set of pairs of indices for $T_3$ CSC topologies. 
\end{theorem}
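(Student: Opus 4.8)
The plan is to handle the upper bound and the hardness separately; the hardness is where essentially all the content lies, and it will come almost immediately from the order-theoretic machinery already developed.

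For the upper bound, I would argue directly that homeomorphism between (indices of) computable CSC spaces is $\Sigma^1_1$. A homeomorphism between the space indexed by $e$ and the space indexed by $f$ is witnessed by a single function $h\colon\omega\to\omega$; the conditions that $h$ is a bijection between the two underlying sets, that for every basic open $U_i$ of the first space the image $h[U_i]$ is open in the second (i.e.\ $\forall x\in h[U_i]\,\exists j\,(x\in V_j\subseteq h[U_i])$), and symmetrically for $h^{-1}$, are all arithmetic in $h$ with $e,f$ as parameters, using the characteristic-function components of the indices to decode the bases. Existentially quantifying over the real $h$ then gives a $\Sigma^1_1$ definition, and intersecting with the (arithmetic) set of pairs of indices for $T_3$ CSC spaces keeps us $\Sigma^1_1$ within that class. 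This step is routine bookkeeping; the only mild care needed is the usual normalization so that underlying sets may be taken to be $\omega$.

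For hardness, let $A$ be an arbitrary $\Sigma^1_1$ set. By Proposition~\ref{prop:reduct} there is a computable procedure $\Phi_A$ such that $\Phi_A(n)$ is an index of a Harrison ordering when $n\in A$ and an index of a computable ordinal when $n\notin A$. Fix once and for all a computable index $z$ for $\mathcal{H}^\tau$, the interval topology of some fixed computable copy of the Harrison ordering (such an index exists because $\mathcal{H}$ has a computable copy and the operation $L\mapsto L^\tau$ is computable). Using $L\mapsto L^\tau$ together with the $s$-$m$-$n$ theorem, the map $n\mapsto\bigl(\Phi_A(n)^\tau,\,z\bigr)$ is computable, and both coordinates are always indices for interval topologies of computable linear orderings, hence for $T_3$ CSC spaces; so the reduction always lands in the required class. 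For correctness: if $n\in A$ then $\Phi_A(n)$ is a computable copy of the Harrison ordering, and since the interval topology depends only on the order type, $\Phi_A(n)^\tau$ is homeomorphic to $\mathcal{H}^\tau$, i.e.\ to the space indexed by $z$; if $n\notin A$ then $\Phi_A(n)$ is a computable ordinal, so $\Phi_A(n)^\tau$ is completely metrizable by Corollary~\ref{cor:ordMet}, whereas $\mathcal{H}^\tau$ is not completely metrizable by Corollary~\ref{cor:notHomeo} (equivalently $\RK(\mathcal{H})=\infty$, so $\mathcal{H}^\tau$ is not strong Choquet by Proposition~\ref{prop:SChiffRk}), and since complete metrizability is a topological invariant, $\Phi_A(n)^\tau$ is not homeomorphic to the space indexed by $z$. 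Thus $n\in A$ iff $\bigl(\Phi_A(n)^\tau,z\bigr)$ is a pair of homeomorphic CSC spaces.

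The potential obstacle one might worry about is that, in order to land inside the $T_3$ (equivalently metrizable) CSC spaces, we cannot use the more flexible non-metrizable gadgets of Section~2, so we have to separate $A$ from its complement using only interval topologies of linear orderings; but this is exactly what Proposition~\ref{prop:reduct} delivers, since the Harrison/ordinal dichotomy on the order side becomes precisely the ``not completely metrizable vs.\ completely metrizable'' dichotomy on the topological side, and a fixed copy of $\mathcal{H}^\tau$ serves as a single homeomorphism target. So in fact no real obstacle remains once Proposition~\ref{prop:reduct} and Corollaries~\ref{cor:ordMet}--\ref{cor:notHomeo} are in hand; the work is entirely in assembling them. (The same reduction, read as $n\mapsto\Phi_A(n)^\tau$ against the fixed space $z$, simultaneously shows that the index set of CSC spaces homeomorphic to $\mathcal{H}^\tau$ is $\Sigma^1_1$-complete within the $T_3$ CSC spaces.)
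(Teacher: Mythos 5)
Your proposal is correct and follows essentially the same route as the paper: the hardness reduction $n\mapsto(\Phi_A(n)^\tau,\mathcal{H}^\tau)$ via Proposition~\ref{prop:reduct}, with correctness supplied by Corollaries~\ref{cor:ordMet} and~\ref{cor:notHomeo} exactly as in the paper's proof. Your explicit $\Sigma^1_1$ upper-bound argument (quantifying over a witnessing bijection $h$) is routine and fine; the paper leaves that part implicit.
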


\begin{proof}
Given a $\Sigma_1^1$ set $A$, let $\Psi(n) = (\mathcal{H}^\tau,\Phi_A(n)^\tau)$. 
If $n\in A$, we have that $\Phi_A(n)\cong \mathcal{H}$, so their order topologies are certainly homeomorphic.
If $n\not\in A$, we have that $\Phi_A(n)$ is a computable ordinal.
It follows from Corollaries \ref{cor:ordMet} and \ref{cor:notHomeo} that $\mathcal{H}^\tau\not\cong\Phi_A(n)^\tau$.
\end{proof}

\begin{proposition}
The set of indices for completely metrizable CSC spaces is $\Pi_1^1$-complete inside the set of indices for $T_3$ CSC spaces.
\end{proposition}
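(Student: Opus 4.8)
The plan is to establish the upper bound and the lower bound separately, leveraging the machinery already developed in this section. For the upper bound, I would argue that ``$L^\tau$ is completely metrizable'' is a $\Pi_1^1$ condition on the index. By Proposition~\ref{prop:SChiffRk} and the subsequent theorem, the interval topology on $L$ is completely metrizable if and only if $\RK(L) < \infty$, which by the definition of $\RK$ says that there is no $x \in L$ that is a left or right $\alpha$-limit for every countable ordinal $\alpha$. Equivalently, $L$ is \emph{scattered} in the sense that there is no embedding of $\eta$ into $L$ --- since a point of rank $\infty$ forces a copy of $\eta$ (a dense suborder inside the set of rank-$\infty$ points, by Lemma~\ref{lem:RkTechnical}(2)), and conversely any embedding of $\eta$ produces rank-$\infty$ points. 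The statement ``$L$ has no suborder of type $\eta$,'' i.e. ``there is no one-to-one order-preserving map from $\Q$ into $L$,'' is manifestly $\Pi_1^1$: it quantifies universally over (codes for) functions $\omega \to \omega$ and then asks an arithmetic condition. So the index set is $\Pi_1^1$ within the $T_3$ (equivalently metrizable) CSC spaces, using that $L \mapsto L^\tau$ is a computable map landing inside the $T_3$ indices.

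For hardness, I would use Proposition~\ref{prop:reduct} together with the composite computable map $n \mapsto \Phi_A(n)^\tau$. Given a $\Pi_1^1$ set $B$, let $A = \omega \setminus B$, which is $\Sigma_1^1$, and consider $n \mapsto \Phi_A(n)^\tau$. If $n \in B$, then $n \notin A$, so $\Phi_A(n)$ is (an index for) a computable ordinal, and by Corollary~\ref{cor:ordMet} its interval topology is completely metrizable. If $n \notin B$, then $n \in A$, so $\Phi_A(n)$ is an index for a Harrison ordering, and by Corollary~\ref{cor:notHomeo} its interval topology is not completely metrizable. Since each $\Phi_A(n)^\tau$ is an index for a $T_3$ CSC space (the interval topology is always $T_3$, as noted earlier), this is a reduction of $B$ to the completely-metrizable index set within the $T_3$ indices, witnessing $\Pi_1^1$-hardness.

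The one step requiring genuine care is the equivalence, at the level of index complexity, between ``$\RK(L) < \infty$'' and the transparently $\Pi_1^1$ statement ``$L$ embeds no copy of $\eta$.'' The direction that an $\eta$-embedding yields a rank-$\infty$ point follows from Lemma~\ref{lem:NotComplete}-style reasoning (every point of $\Q$ is a limit of points in its orbit, hence has rank $\infty$, and this transfers along the embedding since $\RK$ of a point is computed in any interval containing it). The converse --- that a rank-$\infty$ point forces an embedded $\eta$ --- uses Lemma~\ref{lem:RkTechnical}(2): the set $S$ of rank-$\infty$ points is nonempty, and within $S$ every point is a two-sided (or at least one-sided, and then one iterates) limit of other points of $S$, from which a back-and-forth argument extracts a copy of $\eta$. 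I would write this out carefully, as it is the load-bearing observation that keeps the upper bound at $\Pi_1^1$ rather than merely at some $\Sigma^1_1$-over-$\Pi^1_1$ level obtained by naively unwinding the ordinal-indexed definition of $\RK$.
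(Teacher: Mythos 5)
Your hardness argument is exactly the paper's: complement the $\Pi^1_1$ set, apply the uniform Harrison-ordering/computable-ordinal dichotomy, and pass through $L\mapsto L^\tau$, using that ordinals give completely metrizable interval topologies while the Harrison ordering does not. That half is fine.

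The upper bound, however, contains a genuine error. You replace ``completely metrizable'' by the order-theoretic condition ``$L$ admits no suborder of type $\eta$,'' claiming this is equivalent to $\RK(L)<\infty$. The implication from a rank-$\infty$ point to an embedded $\eta$ is correct (the rank-$\infty$ points form a dense-in-itself suborder), but the converse fails: an order-embedding of $\eta$ need not produce any topological limit points, because the image points can each sit isolated inside blocks. The paper's own Example (3) is the counterexample: $\RK(\zeta\cdot\eta)=0$, and indeed $(\zeta\cdot\eta)^\tau$ is the discrete topology on a countable set, hence completely metrizable --- yet $\zeta\cdot\eta$ visibly contains a suborder of type $\eta$. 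So your proposed $\Pi^1_1$ condition is strictly stronger than complete metrizability and the reduction of the upper bound to it is false. You have conflated \emph{order-theoretic} scatteredness (no suborder of type $\eta$) with \emph{topological} scatteredness (every nonempty subset has a relatively isolated point); only the latter characterizes complete metrizability of countable metrizable spaces. A second, smaller issue: the proposition concerns indices of arbitrary $T_3$ CSC spaces, so the $\Pi^1_1$ predicate must be stated on the space's index directly; you cannot assume the space is presented as $L^\tau$ for a given $L$, and the fact that every countable $T_3$ space is \emph{homeomorphic} to some $L^\tau$ does not let you compute such an $L$ from the index.

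Both problems are repaired simultaneously by doing what the paper does: express complete metrizability as the nonexistence of a \emph{topological} embedding of $\mathbb{Q}$ (with its usual topology) into the space, i.e.\ for every function $f:\mathbb{Q}\to X$, $f$ fails to be an injection that is continuous with continuous inverse onto its image. The matrix is arithmetic in $f$ and in the characteristic function of the basis, and the single universal function quantifier puts the condition in $\Pi^1_1$ on the CSC index itself; equivalently one may say the space collapses to a point after sufficiently many Cantor--Bendixson derivatives.
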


\begin{proof}
The hardness follows immediately from Proposition \ref{prop:reduct} along with Corollaries \ref{cor:ordMet} and  \ref{cor:notHomeo}.
The observation that complete metrizability is $\Pi_1^1$ follows from noting that it is equivalent to being one point after taking sufficiently many Cantor-Bendixson derivatives, or, what is the same, taking no continuous embedding from $\mathbb{Q}^\tau$.
\end{proof}

Working within the context of linear orderings yields several other interesting results about $T_3$ CSC spaces.
This is partly true because we can use powerful tools from computable structure theory to construct linear orderings.
Chief among these tools is the following: the Pair of Structure's theorem of Ash and Knight \cite{AK90}.
Below, for each $\alpha\in\omega_1$ we let $\leq_\alpha$ be the standard, asymmetric back-and-forth relations.
Given two copies $C$ and $D$, we say the back-and-forth relations are \textit{computable up to} a computable ordinal $\beta$ if the set of tuples $\bar{c}\in C$ and $\bar{d}\in D$ with $\bar{c}\leq_\gamma\bar{d}$ are uniformly computable in $\gamma$ for each $\gamma<\beta$.

\begin{theorem}[\cite{AK90}]\label{thm:PairOfStructures}
    Say that $A$ has a computable copy $\A$ and $B$ has a computable copy $\B$ where the back-and-forth relations are computable up to $\alpha$.
    If $\A\leq_\alpha \B$ then
    \[\{(e,f)|\C_e\cong A,C_f\cong B\}\]
    is $(\Sigma_\alpha, \Pi_{\alpha})$ hard.
\end{theorem}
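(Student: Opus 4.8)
The plan is to isolate the core content --- the Ash--Knight pair-of-structures construction --- prove that, and then obtain the stated hardness by routine bookkeeping. The core assertion is: from the hypotheses (computable copies $\A$, $\B$; back-and-forth relations uniformly computable up to $\alpha$; and $\A\leq_\alpha\B$, where we may assume $A\not\cong B$ since otherwise there is nothing to prove) together with a $\Sigma^0_\alpha$ set $S$, one produces a uniformly computable sequence $(\mathcal{D}_n)_{n\in\omega}$ of structures with $\mathcal{D}_n\cong A$ when $n\in S$ and $\mathcal{D}_n\cong B$ when $n\notin S$. Given this, the theorem falls out: to witness that $D=\{(e,f):\C_e\cong A,\ \C_f\cong B\}$ is $\Sigma_\alpha$-hard, send $n$ to $(e_n,f_n)$ with $e_n$ an index for $\mathcal{D}_n$ (run on $S$) and $f_n$ a fixed index for a computable copy of $B$, so that $(e_n,f_n)\in D\iff n\in S$; to witness $\Pi_\alpha$-hardness for a $\Pi_\alpha$ set $T$, instead put a fixed copy of $A$ in the first coordinate and, in the second, the output of the core construction run on the $\Sigma_\alpha$ set $\overline T$. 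Disjoint-pair versions of the hardness notion are handled the same way, using only the construction on the $\Sigma_\alpha$ side.

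For the core construction I would build $\mathcal{D}_n$ on domain $\omega$ by an injury-free stagewise process. At each stage we maintain a finite initial segment of $\mathcal{D}_n$ (a finite structure with its atomic diagram so far), a finite embedding of its ``committed'' part into either $\A$ or $\B$, and a current ordinal level $\gamma\le\alpha$ carrying the guarantee that the uncommitted part still bears the back-and-forth relation $\leq_\gamma$ toward the relevant structure. The question ``is $n\in S$?'' is consulted via a $\Sigma^0_\alpha$-approximation --- Ash's $\alpha$-true-stage apparatus, or equivalently the instruction tree obtained by unfolding the $\Sigma^0_\alpha$ definition into nested $\Pi^0_{<\alpha}$ requirements. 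While the approximation offers no witness we keep copying $\B$; each new layer of positive evidence invokes $\A\leq_\alpha\B$ to lower $\gamma$ and partially redirect toward $\A$; and a confirmed witness (which occurs precisely when $n\in S$) forces a last switch, after which we copy $\A$ outright onto the remaining coordinates. Since the back-and-forth relations are computable up to $\alpha$, every ``locate a tuple of $\A$ or $\B$ matching the committed part at level $\gamma$'' step is an effective search that halts, so $n\mapsto\mathcal{D}_n$ is computable. Well-foundedness of the level descent forces stabilization, and the verification is then immediate: if $n\notin S$ the final switch never occurs and $\mathcal{D}_n\cong B$; if $n\in S$ it occurs at the true stage and $\mathcal{D}_n\cong A$.

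To make the engine concrete I would work out $\alpha=1$ and $\alpha=2$ by hand. For $\alpha=1$ with $S$ c.e.: copy $\B$; the moment $n$ enters $S$, the finite piece built realizes an atomic type of $\B$, hence (by $\A\leq_1\B$) an atomic type of $\A$, so find a matching tuple $\bar a$ in $\A$ by search and thereafter copy $\A$ with $\bar a$ as the image of what was built. For $\alpha=2$ one cycles through candidate $\Sigma_2$-witnesses, at each step spending only $\leq_1$-flexibility that $\A\leq_2\B$ promises can be recovered at the next candidate. The general case is this pattern stratified along $\alpha$, which is exactly what the Ash--Knight notion of an $\alpha$-system (equivalently, the workers method) formalizes; I would invoke its metatheorem for the uniformly computable output rather than reassemble the machinery.

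Accordingly, the main obstacle --- and the reason for leaning on the $\alpha$-systems framework --- is the limit-stage bookkeeping: synchronizing the nested alternating (fundamental-sequence) structure of a $\Sigma^0_\alpha$ predicate with the transfinite recursion defining $\leq_\alpha$, so that the levels visited form a well-founded descent and the structure built really is a copy of the intended one. The remaining work is then just to check that our hypotheses supply precisely the data an $\alpha$-system asks for: computable copies, back-and-forth relations uniformly computable up to $\alpha$, and the relation $\A\leq_\alpha\B$ at the empty tuple.
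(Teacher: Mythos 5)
This theorem is imported verbatim from Ash and Knight \cite{AK90}; the paper offers no proof of it, so there is no in-paper argument to compare yours against. Judged on its own, your sketch is the standard proof and is essentially correct: the reduction bookkeeping on top of the core construction is right (including the observation that one may assume $A\not\cong B$, the use of a fixed copy of $B$ in the second coordinate for the $\Sigma_\alpha$ direction, and running the construction on the complement for the $\Pi_\alpha$ direction), and your reading of the somewhat loosely stated conclusion as covering both the set-of-pairs version and the disjoint-pair/two-outcome version matches how the theorem is actually used later in the paper (a single uniformly computable $\mathcal{D}_n$ that is $\cong A$ in the $\Sigma_\alpha$ outcome and $\cong B$ in the $\Pi_\alpha$ outcome). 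Your identification of the direction of the switch is also correct: $\A\leq_\alpha\B$ is exactly the guarantee that a partially built copy of $\B$ can be redirected toward $\A$ at the appropriate back-and-forth level, as your $\alpha=1$ case illustrates. The one caveat worth naming is that the genuinely hard part --- the limit-level synchronization of the $\Sigma^0_\alpha$ approximation with the transfinite descent of back-and-forth levels --- is delegated to the Ash--Knight $\alpha$-systems metatheorem, which is precisely the content of the reference being cited; so your write-up is best read as a correct reduction of the stated theorem to that metatheorem rather than a self-contained proof. That is a reasonable and standard way to present this result, and it is consistent with how the paper itself treats it.
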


This theorem is exceptionally useful on many occasions.
The reason for this is that it replaces the careful coding arguments typical of a hardness result (such as those used in the previous section of this article) with more combinatorial arguments concerning back-and-forth relations.
This becomes even more useful in the context of linear orderings because a lot of work has been done to understand the back-and-forth relations between linear orderings in the field of Scott analysis (see, e.g. \cite{GR24,GHT25}).
We demonstrate the power of these techniques with the following selected calculations.

We begin with an improvement on the discrete topology index set hardness result seen in Theorem \ref{thm:T2Completeness}.

\begin{proposition}
The set of indices for discrete topologies within $T_3-CSC$ is $\Pi_3^0$ complete.
\end{proposition}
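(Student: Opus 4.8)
The plan is to realize discrete CSC spaces as interval topologies of linear orderings and then apply the Pair of Structures Theorem (Theorem \ref{thm:PairOfStructures}) with $\alpha = 3$. First I would observe that the interval topology of a linear ordering $L$ is discrete if and only if every point of $L$ is isolated in the order topology, i.e., if and only if every element of $L$ has an immediate successor and an immediate predecessor (with the usual caveat for endpoints). Equivalently, $L$ is a sum of copies of $\zeta$ (together with possibly a single $\omega$ or $\omega^*$ block at an end, or is finite). So the target structure $B$ should be chosen to have discrete interval topology — for instance $B = \zeta \cdot \omega$ or simply $\zeta$ — and the ``bad'' structure $A$ should be a linear ordering whose interval topology is $T_3$ but \emph{not} discrete, with $A \leq_3 B$ in the back-and-forth ordering. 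A natural choice is $A = \zeta \cdot \omega + \zeta \cdot \zeta$ or something built from a single added limit point, e.g. $A$ = a discrete-looking ordering with one extra two-sided limit point of rank $1$; the key is that the non-discreteness is witnessed by a $\Pi_3^0$ (and not lower) condition, matching the $\Pi_3^0$ upper bound one gets directly from the defining formula $\forall x \exists i \forall y (y \in U_i \leftrightarrow x = y)$.

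Next I would verify the two hypotheses of Theorem \ref{thm:PairOfStructures}. Both $A$ and $B$ have computable copies; for linear orderings of the simple shape above, the back-and-forth relations $\leq_\gamma$ for $\gamma < 3$ are uniformly computable (this is standard for sums of $\zeta$-blocks with finitely many exceptional points — the relevant relations are determined by finitely much local data about successor/predecessor chains and a bounded amount of limit structure). Then I would check that $\A \leq_3 \B$: intuitively, at level $3$ one cannot yet distinguish a two-sided limit point of rank $1$ from an ordinary point of a $\zeta$-block, since the distinguishing feature (``every neighborhood contains infinitely many points, each of which is isolated or\ldots'') requires one more alternation of quantifiers to detect. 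This should be a short back-and-forth computation once the structures are pinned down. The theorem then yields that $\{(e,f) : \C_e \cong A,\ \C_f \cong B\}$ is $(\Sigma_3^0,\Pi_3^0)$-hard; fixing the second coordinate to a single computable index for $B$ and composing with $L \mapsto L^\tau$ gives a computable reduction of a $\Pi_3^0$-complete set into the index set of discrete CSC spaces, with every output landing in $T_3$-$CSC$ since interval topologies are always $T_3$. Combined with the $\Pi_3^0$ upper bound, this gives $\Pi_3^0$-completeness within $T_3$-$CSC$.

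The main obstacle I anticipate is the non-injectivity of the interval-topology map, flagged earlier in the section: two order-theoretically different linear orderings can generate homeomorphic (indeed equal) topologies, and conversely I must ensure that $A^\tau$ is genuinely non-discrete while $B^\tau$ is genuinely discrete, and that the Pair of Structures hardness at the level of \emph{isomorphism of orderings} actually transfers to \emph{the topological index set}. The clean way around this is to choose $A$ and $B$ so that the property ``the interval topology is discrete'' is \emph{invariant} under the $\leq_3$-equivalence class in question — i.e., so that every ordering $\leq_3$-equivalent to $A$ has non-discrete interval topology and every one $\leq_3$-equivalent to $B$ has discrete interval topology. If that invariance fails for the first naive choice, I would adjust by padding $A$ and $B$ with enough rigid ``marker'' structure (e.g. an initial segment of a fixed finite or $\omega$-type) so that the relevant isomorphism types are topologically separated, while keeping the back-and-forth distance at exactly $3$. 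Verifying this invariance, rather than the back-and-forth inequality itself, is where the real care is needed.
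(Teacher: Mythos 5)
Your proposal is essentially the paper's proof: the paper applies the Pair of Structures Theorem to the pair $\zeta+1+\zeta \leq_3 \zeta$ (from \cite{GR24}, Theorem 3.7) and composes with the $\tau$ map, which is exactly your ``discrete-looking ordering with one extra two-sided limit point'' versus $\zeta$. Two small corrections: your first concrete candidate $A=\zeta\cdot\omega+\zeta\cdot\zeta$ would not work, since $\zeta\cdot\omega$ has no last element and $\zeta\cdot\zeta$ no first, so every point of that sum still has an immediate successor and predecessor and its interval topology is discrete; and the invariance-under-$\leq_3$-equivalence concern at the end is unnecessary, because the theorem outputs a computable copy literally isomorphic to $A$ or to $B$, so one only needs to check that $(\zeta+1+\zeta)^\tau$ is non-discrete and $\zeta^\tau$ is discrete.
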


\begin{proof}
It was shown in \cite{GR24} Theorem 3.7 that $\zeta+1+\zeta \leq_3 \zeta$.
Furthermore, it is not difficult to confirm that the standard presentations of these structures have that the back-and-forth relations are computable up to 3 (this can be explicitly checked using the description of the 2 back-and-forth types given in \cite{McountingBF} Section 4.1).
$(\zeta+1+\zeta)^\tau$ has one limit point - namely the element corresponding to the "1" in the middle.
This means that it is not discrete.
On the other hand, $(\zeta)^\tau$ has no limit points and is therefore discrete.

By Theorem \ref{thm:PairOfStructures}, this gives that there is a computable procedure that produces a computable copy of $\zeta+1+\zeta$ in a $\Sigma_3$ outcome and a computable copy of $\zeta$ in a $\Pi_3^0$ outcome.
Composing this procedure with the $^\tau$ map gives the desired reduction.
\end{proof}

We now demonstrate that the Cantor-Bendixson rank (or $\RK$ rank in the order theoretic domain) of a $T_3$ CSC space is optimally defined by the recursive presentation that we gave in this section. 

\begin{proposition}
For any $\alpha\in\omega_1^{ck}$:
\begin{enumerate}
	\item The set of indices of CSC spaces of rank at least $\alpha+1$ is a $\Sigma_{2\alpha+3}^0$ complete set within $T_3-CSC$.
	\item The set of indices of CSC spaces or rank at most $\alpha$ is a $\Pi_{2\alpha+3}$ complete set within $T_3-CSC$.
	\item The set of indices of CSC spaces that are rank exactly $\alpha$ is a  $\Pi_{2\alpha+3}$ complete set within $T_3-CSC$.
\end{enumerate}

\end{proposition}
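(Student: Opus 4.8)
The plan is to build everything on top of the Pair of Structures theorem (Theorem~\ref{thm:PairOfStructures}) by exhibiting, for each $\alpha \in \omega_1^{ck}$, a computable pair of linear orderings whose $\RK$ ranks straddle the desired level and whose back-and-forth relations are computable up to $2\alpha+3$. The natural candidate orderings are built from $\zeta^\beta$ and $\omega^\beta$. Recall from the examples that $\RK(\zeta^\beta)=0$ while $\RK(\omega^\alpha+1)=\alpha$; more relevantly, multiplying an ordering by $\zeta$ ``kills'' the bottom level of $\RK$ (a point of $\RK$-rank $\gamma$ in $L$ becomes part of a $\zeta$-block and no longer a one-sided limit in the same way), whereas adding a single isolated-type point back in the middle, as in the discrete-topology proposition, reintroduces a limit point. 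So I would aim to produce a pair like $L_\alpha^{+} = \omega^{\alpha}\cdot\zeta \;+\; 1 \;+\; \zeta\cdot(\omega^{\alpha})^*$ versus $L_\alpha^{-} = \zeta \cdot \omega^\alpha$ or a similar symmetric construction, arranged so that $\RK(L_\alpha^+) = \alpha+1$ (the middle ``$1$'' is a two-sided $\alpha$-limit of $\alpha$-limit points coming from the $\omega^\alpha$ factors) while $\RK(L_\alpha^-) = \alpha$, and so that the interval topologies of these orderings witness the rank gap after passing through the $^\tau$ map.

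The key steps, in order, would be: (1) Pin down precisely which ordering-theoretic operation shifts $\RK$ by exactly one and verify, using the recursive definition of $\beta$-limit from \cite{MBook} and Lemma~\ref{lem:RkTechnical}, that the two orderings $L_\alpha^{+}$ and $L_\alpha^{-}$ have $\RK$ values $\alpha+1$ and $\alpha$ respectively; these are both scattered of Hausdorff rank roughly $\alpha+1$, hence have ordinal $\RK$ by the earlier proposition, so no $\infty$-issues arise. (2) Establish the back-and-forth inequality $L_\alpha^{-} \leq_{2\alpha+3} L_\alpha^{+}$ (in the appropriate direction so that the $\Sigma$ outcome is the ``rank $\geq \alpha+1$'' outcome); this should follow from the standard Scott analysis of linear orderings, building inductively on the known fact $\zeta+1+\zeta\leq_3\zeta$ used in the discrete case, which is the $\alpha=1$ instance, and leveraging that $\omega^\alpha$ contributes $2\alpha$ to back-and-forth complexity. (3) Confirm the back-and-forth relations of the standard computable presentations are computable up to $2\alpha+3$, again by induction following the style of \cite{McountingBF}. (4) Apply Theorem~\ref{thm:PairOfStructures} and compose with the $^\tau$ map to get the $(\Sigma_{2\alpha+3}, \Pi_{2\alpha+3})$-hardness, which yields hardness simultaneously for parts (1), (2), and (3): the ``rank $\geq \alpha+1$'' set is $\Sigma$-hard, its complement the ``rank $\leq \alpha$'' set is $\Pi$-hard, and since in this pair ``rank exactly $\alpha$'' coincides with ``rank $\leq \alpha$'' on the image, part (3) follows too. (5) For the upper bounds: ``$\RK(L)\geq\alpha+1$'' says there is a point that is a two-sided or one-sided $(\alpha+1)$-limit, and unwinding the recursive definition of $\beta$-limit shows each successive limit-level adds two arithmetic quantifiers (an $\exists$ over witnesses and a $\forall$ over potential separating intervals), giving $\Sigma_{2\alpha+3}$; dually for $\Pi_{2\alpha+3}$; and ``exactly $\alpha$'' is the conjunction of a $\Pi_{2\alpha+3}$ and a $\Sigma_{2\alpha+1}$ condition, hence $\Pi_{2\alpha+3}$.

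I expect the main obstacle to be step (2): getting the back-and-forth relation $L_\alpha^- \leq_{2\alpha+3} L_\alpha^+$ with the exact ordinal bound $2\alpha+3$ rather than something off by a constant or a factor. The arithmetic $2\alpha+3 = 2(\alpha+1)+1$ strongly suggests the right scaffolding — each level of Cantor–Bendixson/$\RK$ rank in a scattered order costs two back-and-forth levels, plus a fixed additive constant coming from the ``$CSC$ index'' overhead and the $+1$ for the strict-versus-non-strict inequality — but making the induction clean, choosing $L_\alpha^{\pm}$ so that the limit stage $\alpha = \lambda$ works with a fundamental sequence (mirroring the limit case in the earlier $\RK$-vs-Hausdorff-rank proposition), and handling the $\zeta$-blocks' own internal back-and-forth complexity, will require care. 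A secondary subtlety is making sure the $^\tau$ map does not collapse the two spaces: distinct orderings can give homeomorphic interval topologies, so I would check that $L_\alpha^+$ genuinely has a point whose Cantor–Bendixson rank in the \emph{topology} is $\alpha$ and $L_\alpha^-$ does not, using that $\RK$ of a point is exactly the Cantor–Bendixson rank of the corresponding topological point.
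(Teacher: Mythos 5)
Your overall strategy is the same as the paper's: exhibit a computable pair of linear orderings straddling rank $\alpha$ versus rank $\alpha+1$, verify that their back-and-forth relations are computable up to $2\alpha+3$, apply the pair of structures theorem, compose with the $^\tau$ map, and read off the upper bounds by unwinding the recursive definition of $\beta$-limit (the paper gets $\alpha\text{-}lim(x)$ in $\Sigma_{2\alpha}$ and ``rank at most $\alpha$'' as $\forall x\,\lnot(\alpha+1)\text{-}lim(x)$, hence $\Pi_{2\alpha+3}$, matching your count). Two concrete corrections. First, your candidate pair is off: $\zeta\cdot\omega^\alpha$ is $\omega^\alpha$ many copies of $\zeta$, so every point has an immediate predecessor and successor and $\RK(\zeta\cdot\omega^\alpha)=0$, not $\alpha$; the order of multiplication matters, and the pair you want is $\omega^\alpha\cdot(\zeta+1+\zeta)$ versus $\omega^\alpha\cdot\zeta$ (the latter genuinely has rank $\alpha$, via the block-starting points, which is essential for your part (3) argument that ``rank exactly $\alpha$'' coincides with ``rank $\leq\alpha$'' on the image of the reduction). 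Second, the step you flag as the main obstacle --- establishing $\omega^\alpha\cdot(\zeta+1+\zeta)\leq_{2\alpha+3}\omega^\alpha\cdot\zeta$ with the exact bound, together with computability of the back-and-forth relations up to that level --- does not need to be reproved by induction from the $\zeta+1+\zeta\leq_3\zeta$ base case: it is available in the literature (Theorem 3.7 of \cite{GR24} combined with Lemma II.38 of \cite{MBook}), and the paper simply cites it. With those two repairs your argument goes through as written.
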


\begin{proof}
It follows at once from \cite{GR24} Theorem 3.7 and \cite{MBook} Lemma II.38 that for all $\alpha\in\omega_1^{ck},$ $\omega^\alpha\cdot(\zeta+1+\zeta) \leq_{2\alpha+3} \omega^\alpha\cdot\zeta$.
Furthermore, it is not difficult to confirm that the standard presentations of these structures have that the back-and-forth relations are computable up to $2\alpha+3$ (in fact, an explicit definition of the back-and-forth relations up to this level is given in \cite{MBook} Lemma II.38).
$(\omega^\alpha\cdot(\zeta+1+\zeta))^\tau$ has one $\alpha+1$-limit point - namely the first element in the copy of $\omega^\alpha$ corresponding to the "1" in the middle.
This means that it is rank $\alpha+1$.
On the other hand, $(\omega^\alpha\cdot\zeta)^\tau$ has no $\alpha+1$-limit points and is therefore rank $\alpha$.

By Theorem \ref{thm:PairOfStructures}, this gives that there is a computable procedure that produces a computable copy of $\omega^\alpha\cdot(\zeta+1+\zeta)$ in a $\Sigma_{2\alpha+3}$ outcome and a computable copy of $\omega^\alpha\cdot\zeta$ in a $\Pi_{2\alpha+3}$ outcome.
Composing this map with the $^\tau$ map gives the desired reduction in each case.

Writing down the explicit definitions of these concepts gives completeness. 
Saying that an element $x$ is isolated is a $\Sigma_2$ property; $\exists i \forall y ~ x\in U_i \land (y\neq x\to y\not\in U_i).$
In other words, saying that an element is a limit is a $\Pi_2$ property.
Saying that an element is an $\alpha$-limit is given by the following recursive definition.
\[ \alpha-lim(x):=\bigwwedge_{\beta<\alpha} \forall i ~ x\in U_i \to \exists y\neq x ~ y\in U_i \land \beta-lim(y).\]
A straightforward transfinite induction shows that $\alpha-lim(x)$ is a $\Sigma_{2\alpha}$ formula.
Saying that a space is at most rank $\alpha$ is the same as saying
\[\forall x ~ \lnot (\alpha+1)-lim(x),\]
which is then a $\Pi_{2\alpha+3}$ property.
Saying that a space is of rank at least $\alpha+1$ is simply the negation of this formula.
To say that a space has rank exactly $\alpha$ is just to say that it has rank at most $\alpha$ and at least $\alpha$.
It is straightforward to confirm that these are of the desired complexities.

\end{proof}

\bibliographystyle{alpha}
\bibliography{CSCbib}

\end{document}